\newtheorem{thm}{Theorem}
\newtheorem{lem}{Lemma}
\newtheorem{prop}{Proposition}
\newtheorem{df}{Definition}
\newtheorem{cor}{Corollary}
\newtheorem{ass}{Assumption}
\theoremstyle{BBstyle4}
\newcommand{\pa}[1]{\left({#1}\right)}
\newcommand{\norm}[1]{\left\|{#1}\right\|}
\newcommand{\cro}[1]{\left[{#1}\right]}
\newcommand{\ab}[1]{\left|{#1}\right|}
\newcommand{\ac}[1]{\left\{{#1}\right\}}
\newcommand{\pen}{\mathop{\rm pen}\nolimits}
\newcommand{\dfleche}[1]{\,\displaystyle{\mathop{\longrightarrow}_{#1}}\,}
\newcommand{\CP}[1]{\stackrel{\mathrm{P}}{\dfleche{#1}}}
\newcommand{\CV}[1]{\dfleche{#1}}
\newcommand{\E}{{\mathbb{E}}}
\renewcommand{\L}{{\mathbb{L}}}
\newcommand{\N}{{\mathbb{N}}}
\renewcommand{\P}{{\mathbb{P}}}
\newcommand{\Q}{{\mathbb{Q}}} 
\newcommand{\R}{{\mathbb{R}}}
\newcommand{\sA}{{\mathscr{A}}}
\newcommand{\sB}{{\mathscr{B}}}
\newcommand{\sC}{{\mathscr{C}}}
\newcommand{\sF}{{\mathscr{F}}}
\newcommand{\sG}{{\mathscr{G}}} 
\newcommand{\sH}{{\mathscr{H}}}
\newcommand{\sJ}{{\mathscr{J}}}
\newcommand{\sL}{{\mathscr{L}}}
\newcommand{\sP}{{\mathscr{P}}}
\newcommand{\sR}{{\mathscr{R}}}
\newcommand{\sS}{{\mathscr{S}}}
\newcommand{\sW}{{\mathscr{W}}}
\newcommand{\sX}{{\mathscr{X}}}
\DeclareMathAlphabet{\mathscrbf}{OMS}{mdugm}{b}{n}
\newcommand{\sbS}{{\mathscrbf{S}}}
\newcommand{\cA}{{\mathcal{A}}}
\newcommand{\cB}{{\mathcal{B}}}
\newcommand{\cF}{{\mathcal{F}}}
\newcommand{\cK}{{\mathcal{K}}}
\newcommand{\cM}{{\mathcal{M}}}
\newcommand{\cN}{{\mathcal{N}}}
\newcommand{\cP}{{\mathcal{P}}}
\newcommand{\cS}{{\mathcal{S}}} 
\newcommand{\cU}{{\mathcal{U}}}
\newcommand{\gh}{{\mathbf{h}}}
\newcommand{\gs}{{\mathbf{s}}} 
\newcommand{\gt}{{\mathbf{t}}}
\newcommand{\gu}{{\mathbf{u}}}
\newcommand{\gv}{{\mathbf{v}}}
\newcommand{\gw}{{\mathbf{w}}}
\newcommand{\gx}{{\mathbf{x}}}
\newcommand{\gB}{{\mathbf{B}}}
\newcommand{\gI}{{\mathbf{I}}}
\newcommand{\gL}{{\mathbf{L}}}
\newcommand{\gP}{{\mathbf{P}}}
\newcommand{\gS}{{\mathbf{S}}} 
\newcommand{\gT}{{\mathbf{T}}}
\newcommand{\gX}{{\mathbf{X}}}
\newcommand{\bs}[1]{\boldsymbol{#1}}
\newcommand{\bsX}{{\bs{X}}}
\newcommand{\bsZ}{\bs{Z}}
\newcommand{\gpi}{{\pi}}
\newcommand{\gPsi}{\bs{\Psi}}
\newcommand{\gtheta}{{\bs{\theta}}}
\newcommand{\gvartheta}{\bs{\vartheta}}
\newlist{lista}{enumerate}{1}
\setlist[lista,1]{label=\alph*),ref=\alph*)}
\newlist{listi}{enumerate}{1}
\setlist[listi,1]{label=(\roman*),ref=(\roman*),align=left}
\newcommand{\eref}[1]{(\ref{#1})}
\renewcommand{\ge}{\geqslant}
\renewcommand{\le}{\leqslant}
\newcommand{\1}{1\hskip-2.6pt{\rm l}}
\newcommand{\<}{{\langle}}
\renewcommand{\>}{{\rangle}}
\newcommand{\etc}[1]{#1_1,\ldots,#1_n}
\newcommand{\str}[1]{\rule{0mm}{#1mm}}
\newcommand{\st}{\strut}
\newcommand{\dps}[1]{\displaystyle{#1}}
\newcommand{\on}{^{\otimes n}}
\newcommand{\et}{^{\star}}
\newcommand{\eps}{{\varepsilon}}
\renewcommand{\gX}{{\bs{X}}}
\newcommand\p{{\gpi_{\!\gX}^{L}}}
\begin{document}

\begin{frontmatter}

\title{Robust Bayes-Like Estimation: Rho-Bayes estimation}
\runtitle{Rho-Bayes estimation}
\author{\fnms{Yannick} \snm{Baraud}\corref{}\ead[label=e1]{yannick.baraud@uni.lu}\thanksref{m1}}
\address{University of Luxembourg,\\
Mathematics Research Unit,\\
Maison du nombre,\\
6 avenue de la Fonte,\\
L-4364 Esch-sur-Alzette,\\
Grand Duchy of Luxembourg.\\ \printead{e1}}
\and
\author{\fnms{Lucien} \snm{Birg\'e}\ead[label=e2]{lucien.birge@upmc.fr}\thanksref{m2}}
\address{Sorbonne Universit\'e\\
CNRS, Laboratoire de Probabilit\'es,\\ \hspace{5mm}Statistique et Mod\'elisation (LPSM)\\
Case courrier 158\\
75252 Paris Cedex 05\\
France.\\ \printead{e2}}
\affiliation{University of Luxembourg\thanksmark{m1} and Sorbonne Universit\'e \thanksmark{m2}}


\maketitle

\begin{abstract} 
We observe $n$ independent random variables with joint distribution $\gP$ and pretend that they are i.i.d. with some common density $s$ (with respect to a known measure $\mu$) that we wish to estimate. We consider a density model $\overline S$ for $s$ that we endow with a prior distribution $\pi$ (with support in $\overline S$) and build a robust alternative to the classical Bayes posterior distribution which possesses similar concentration properties around $s$ whenever the data are truly i.i.d.\ and their density $s$  belongs to the model $\overline S$. Furthermore, in this case, the Hellinger distance between the classical and the robust posterior distributions tends to 0, as the number of observations tends to infinity, under suitable assumptions on the model and the prior. However, unlike what happens with the classical Bayes posterior distribution, we show that the concentration properties of this new posterior distribution are still preserved when the model is misspecified or when the data are not i.i.d. but the marginal densities of their joint distribution are close enough in Hellinger distance to the model $\overline S$. 
\end{abstract}

\begin{keyword}[class=MSC]
\kwd[Primary ]{62G35}
\kwd{62F15}
\kwd{62G05}
\kwd{62G07}
\kwd{62C20}
\kwd{62F99}
\end{keyword}

\begin{keyword}
\kwd{Bayesian estimation}
\kwd{Rho-Bayes estimation}
\kwd{Robust estimation}
\kwd{Density estimation}
\kwd{Statistical models}
\kwd{Metric dimension}
\kwd{VC-classes}
\end{keyword}

\end{frontmatter}

\section{Introduction}\label{I}
The purpose of this paper is to define and study a robust substitute to the classical posterior
distribution in the Bayesian framework. It is known that the posterior is not robust with respect to misspecifications of the model. More precisely, if the true distribution $P$ of an $n$-sample $\bsX=(X_{1},\ldots,X_{n})$  does not
belong to the support $\sP$ of the prior and even if it is close to this support in total variation or Hellinger distance, the posterior may concentrate around a point of this support which is quite far from the truth. A simple example is the following one. 

Let $P_t$ be the uniform distribution on $[0,t]$ with $t\in \overline S=(0,+\infty)$ and, given $a>0$ and $\alpha>1$, let $\pi$ be the prior with density $Ct^{-\alpha}\1_{[a,+\infty)}(t)$, $C=(\alpha-1)^{-1}a^{1-\alpha}$, with respect to the Lebesgue measure on $\R_+$. Given an $n$-sample $\bsX=(X_{1},\ldots,X_{n})$ with distribution $P_{t_{0}}$, the posterior distribution function writes as 
\begin{equation}
t\mapsto G^{L}(t|\gX)=\left[1-\left(\frac{a\vee X_{(n)}}{t}\right)^{n+\alpha-1}\right]\1_{[a\vee X_{(n)},+\infty)}(t)
\label{Eq-nonrobust}
\end{equation}
and, for $t_{0}>a$, we see that this posterior  is highly concentrated on intervals of the form $\left[a\vee X_{(n)},\left(1+cn^{-1}\right)\left(a\vee X_{(n)}\right)\right]$ with $c>0$ large enough. Now assume that the true distribution has been contaminated and is rather 
\[
P=\left(1-n^{-1}\right)\cU([0,t_0])+n^{-1}\cU\left([t_0+100,t_{0}+100+n^{-1}]\right).
\] 
Although it is quite close to the initial distribution $P_{t_0}$ in variation distance (their distance is $1/n$), on an event of probability $1-(1-n^{-1})^{n}>1/2$, $t_0+100<X_{(n)}<t_0+100+n^{-1}$ and the posterior distribution is therefore concentrated around $t_0+100$ according to (\ref{Eq-nonrobust}). The same problem would occur if we were using the maximum likelihood estimator (MLE for short) as an estimator of $t$. 

In the literature, most results  about the behaviour of the posterior do not say anything about  misspecification. Some papers like Kleijn and van der Vaart~\citeyearpar{MR2283395},~\citeyearpar{MR2988412} and Panov and Spokoiny~\citeyearpar{MR3420819} address this problem but their results involve the behaviour of the Kullback-Leibler divergence between $P$ and the distributions in $\sP$, as is also often the case when studying the MLE --- see for instance Massart~\citeyearpar{MR2319879} ---. However, two distributions may be very close in Hellinger distance and therefore indistinguishable with our sample $\bsX$, but have a large Kullback-Leibler divergence. 

Even when the model is exact, the Kullback-divergence is used to analyze the properties of the Bayes posterior. It is known mainly from the work of van der Vaart and co-authors --- see in particular Ghosal, Ghosh and van der Vaart~\citeyearpar{MR1790007} --- that the posterior distribution concentrates around $P\in\sP$ as $n$ goes to infinity but those general results require that the prior puts enough mass on neighbourhoods of $P\in\sP$ of the form $\cK(P,\eps)=\{P'\in\sP,\; K(P,P')<\eps\}$ where $\eps$ is a positive number and $K(P,P')$ the Kullback-Leibler divergence between $P$ and $P'$. Unfortunately such neighbourhoods may be empty (and consequently the condition unsatisfied) when the probabilities in $\sP$ are not equivalent, which is for example the case for the translation model of the uniform distribution on $[0,1]$, even though the Bayes method may work well in such cases.

As already mentioned, the lack of robustness is not specific to the Bayesian framework but has also been noticed for the MLE. Alternatives to the MLE that remedy this
lack of robustness have been considered many years ago by Le Cam (\citeyearpar{MR0334381}, \citeyearpar{MR0395005}, \citeyearpar{MR856411}) and Birg\'e (\citeyearpar{MR722129}, \citeyearpar{MR762855}, \citeyearpar{MR2320111}) but have some limitations. A new recent approach leading to what we called {\it $\rho$-estimators} and described in Baraud, Birg\'e and Sart~\citeyearpar{MR3595933}, hereafter BBS for short, and Baraud and Birg\'e~\citeyearpar{BarBir2018}, hereafter BB, corrects a large part of these limitations. It also improves over the previous constructions since it recovers some of the nice properties of the MLE, like efficiency, under suitably strong regularity assumptions.

The aim of this paper is to extend the theory developed in BBS and BB to a Bayesian paradigm in view of designing a robust substitute to the classical Bayes posterior distribution. To be somewhat more precise, let us consider a classical Bayesian framework of density estimation from $n$ i.i.d.\ observations, although other situations could be considered as well. We observe $\gX=(\etc{X})$ where the $X_i$ belong to some measurable space $(\sX,\sA)$ with an unknown distribution $P$ on $\sX$. We have at disposal a family $\sP=\left\{P_t, t\in \overline S\right\}$ of possible distributions on $\sX$, which is dominated by a $\sigma$-finite measure $\mu$ with respective densities $f(x|t)=(dP_t/d\mu)(x)$. We set $f(\gX|t)=\prod_{i=1}^nf(X_i|t)$ for the likelihood of $t$. Assuming that $\overline S$ is a measurable space endowed with a $\sigma$-algebra $\sS$, we choose a prior distribution $\pi$ on $\overline S$ which leads to a posterior $\pi_\gX^{L}$ that is absolutely continuous with respect to $\pi$ with density $g^{L}(t|\gX)=(d\pi_\gX^{L}/d\pi)(t)$. Following these notations, the log-likelihood function and log-likelihood ratios write respectively as $L(\gX|t)=\log \left(f(\gX|t)\st\right)=\sum_{i=1}^n\log \left(f(X_i|t)\st\right)$ and $\gL(\gX,t,t')=L(\gX|t')-L(\gX|t)$
so that the density $g^{L}(t|\gX)$ of the posterior distribution $\p$ with respect to $\pi$ is given by
\[
\frac{\exp\left[L(\gX|t)\right]}{\dps{\int_{\overline S}}\exp\left[L(\gX|t)\right]d\pi(t)}
=\frac{\dps{\exp\left[L(\gX|t)-\sup_{t'\in {\overline S}}L\left(\gX|t'\right)\right]}}{\dps{\int_{\overline S}}
\exp\left[L(\gX|t)-\sup_{t'\in {\overline S}}L\left(\gX|t'\right)\right]d\pi(t)}
\]
and consequently, 
\begin{equation}
g^{L}(t|\gX)=\frac{f(\gX|t)}{\dps{\int_{\overline S}}f(\gX|t)\,d\pi(t)}=\frac{\dps{\exp\left[-\sup_{t'\in {\overline S}}\gL\left(\gX,t,t'\right)\right]}}
{\dps{\int_{\overline S}}\exp\left[-\sup_{t'\in {\overline S}}\gL\left(\gX,t,t'\right)\right]d\pi(t)}.
\label{Eq-post}
\end{equation}
Note that, if the MLE $\widehat{t}(\gX)$ exists,
\[
\sup_{t'\in {\overline S}}\gL\left(\gX,t,t'\right)=L\left(\gX\!\left|\widehat{t}(\gX)\right.\right)-L(\gX|t)
\]
and that we could as well consider, for all $\beta>0$ the distributions
\[
g_\beta^{L}(t|\gX)\cdot\pi\qquad\mbox{with}\qquad g_\beta^{L}(t|\gX)=
\frac{\exp\left[\beta L(\gX|t)\right]}{\dps{\int_{\overline S}}\exp\left[\beta L(\gX|t)\right]d\pi(t)}.
\]
The posterior corresponds to $\beta=1$ and when $\beta$ goes to infinity the distribution $g_\beta^{L}(t|\gX)\cdot\pi$ converges weakly, under mild assumptions, to the Dirac measure located at the MLE. All values of $\beta\in(1,+\infty)$ will then lead to interpolations between the posterior and the Dirac at the MLE.

Most problems connected with the maximum likelihood or Bayes estimators are due to the fact that the log-likelihood ratios $\gL(\gX,t,t')$ involve the logarithmic function which is unbounded. As a result, we may have 
\[
\E_t\left[\gL(\gX,t,t')\right]=-n\E_t\left[\log(dP_t/dP_{t'})(X_1)\right]=-\infty,
\]
the situation being even more delicate when the true distribution of the $X_i$ is different (even slightly) from $P_t$.

In BBS and BB we offered an alternative to the MLE by replacing the logarithmic function in the log-likehood ratios by other ones. One possibility being the function $\varphi(x)$ defined by
\[
\varphi(x)=4\frac{\sqrt{x}-1}{\sqrt{x}+1}\quad\mbox{for all }x\ge0,
\]
so that, for $x>0$,
\[
\varphi'(x)=\frac{4}{(1+\sqrt{x})^{2}\sqrt{x}}>0\qquad\mbox{and}\qquad\varphi''(x)=-\frac{2(1+3\sqrt{x})}{(1+\sqrt{x})^{3}x^{3/2}}<0.
\]
Like the log function, $\varphi(x)$ is increasing, concave and satisfies $\varphi(1/x)=-\varphi(x)$. In fact, these two functions coincide at $x=1$, their first and second derivatives as well and for all $x\in [1/2,2]$
\begin{equation}
0.99<\frac{\varphi(x)}{\log x}\le1\quad\mbox{and}\quad
|\varphi(x)-\log x|\le0.055|x-1|^{3}.
\label{eq-philog}
\end{equation}
The main advantage of the function $\varphi$ as compared to the log function lies in its boundedness. It can also be extended to $[0,+\infty]$ by continuity by setting $\varphi(+\infty)=4$. As a consequence, the quantity $\varphi\left(t'(X)/t(X)\st\right)$
is well-defined (with the convention $a/0=+\infty$ for $a>0$ and $0/0=1$) and bounded and we can use it as a surrogate for $\log\left(t'(X)/t(X)\st\right)$. This suggests the replacement of $\gL\left(\gX,t,t'\right)$ by $4\gPsi(\gX,t,t')$ where the function $\gPsi$ is defined as 
\begin{equation}
\gPsi(\gx,t,t')=\sum_{i=1}^{n}\psi\left(\sqrt{t'(x_i)\over t(x_i)}\right)\quad\text{for all }\gx\in\sX^{n}\;\;\text{and}\;\;(t,t')\in\overline{S}^{2},
\label{eq-psi}
\end{equation}
with the conventions $0/0=1$, $a/0=+\infty$ for $a>0$ and 
\begin{equation}\label{eq-defpsi}
\psi(x)=
\begin{cases} 
\dps{x-1\over x+1}&\text{for $0\le x<+\infty$},\\
1&\text{for $x=+\infty$},
\end{cases}
\end{equation}
so that $\varphi(x)=4\psi(\sqrt{x})$. Note that $\psi$ is Lipschitz with Lipschitz constant $2$. The important point here is that we have already studied in details in BB the behaviour and properties of a process which is closely related to $(t,t')\mapsto \gPsi(\gX,t,t')$.

We get a pseudo-posterior density with respect to $\pi$ by replacing in (\ref{Eq-post}) the quantity $\sup_{t'\in {\overline S}}\gL(\gX,t,t')$ by $4\sup_{t'\in {\overline S}}\gPsi\left(\gX,t,t'\right)$. This pseudo-posterior density can therefore be written\vspace{1mm}
\[
g(t|\gX)=\frac{\dps{\exp\left[-4\sup_{t'\in {\overline S}}\gPsi\left(\gX,t,t'\right)\right]}}{\dps{\int_{\overline S}}\exp\left[-4\sup_{t'\in {\overline S}}\gPsi\left(\gX,t,t'\right)\right]d\pi(t)}.
\]
More generally we may consider, for $\beta>0$, the random distribution $\pi_{\!\gX}$ given by 
\begin{equation}
{d\pi_{\!\gX}\over d\pi}(t)=\frac{\dps{\exp\left[-\beta\sup_{t'\in {\overline S}}
\gPsi\left(\gX,t,t'\right)\right]}}{\dps{\int_{\overline S}}\exp\left[-\beta\sup_{t'\in {\overline S}}
\gPsi\left(\gX,t,t'\right)\right]d\pi(t)}.
\label{Eq-psB}
\end{equation}
This will be the starting point for our study of this {\em Bayes-like} framework with a {\em posterior-like} distribution $\pi_{\!\gX}$ defined by (\ref{Eq-psB}) that will play a similar role as the posterior distribution in the classical Bayesian paradigm except for the fact that a random variable with distribution $\pi_{\!\gX}$ (conditionally to our sample $\gX$) will possess robustness properties with respect to the hypothesis that $P$ belongs to $\sP$. We shall call it {\em $\rho$-posterior} by analogy with our construction of $\rho$-estimators as described in BBS and BB.

To conclude this introduction, let us emphasize the specific properties of our method that distinguish it from classical Bayesian procedures.\vspace{2mm}\\
--- Contrary to the classical Bayesian framework, concentration properties of the $\rho$-Bayes method do not involve the Kullback-Leibler divergence but only the Hellinger distance.\vspace{1mm}\\
--- Our results are non-asymptotic and given in the form of large deviations of the pseudo-posterior distribution from the true density for a given value $n$ of the number of observations.\vspace{1mm}\\
--- The method is robust to Hellinger deviations: even if the true distribution is at some positive Hellinger distance of the support of the prior, the posterior will behave almost as well as if this were not the case provided that this distance is small.\vspace{1mm}\\
--- Due to the just mentioned robustness properties, we may work with an approximate model for the true density. In particular, when the density is assumed to belong to a non-parametric set $\cS$, it is actually enough to apply our $\rho$-Bayes procedure on a parametric set $\overline{S}$ possessing good approximation properties with respect to the elements of $\cS$. Besides, starting from a continuous prior on a continuous model, we can discretize both of them without loosing much provided that our discretization scale is small enough.\vspace{1mm}\\
--- The $\rho$-posterior also possesses robustness properties with respect to the assumption that the data are i.i.d. provided that the densities of the $X_{i}$ are close enough to the model $\overline S$. \vspace{1mm}

Substituting another function to the log-likelihood in the expression of the posterior distribution, as we do here, is not new in the literature. It has often been motivated by the will of replacing the Kullback-Leibler loss, which is naturally associated to the likelihood-function, by other losses that are  more specifically associated to the problem that needs to be solved (estimation of a mean, classification, etc.) or to deal with the problem of misspecification. This approach leads to {\em quasi-posterior distributions}  which properties have been studied by many authors among which Chernozhukov and Hong~\citeyearpar{MR1984779} and Bissiri~{\em et al.}~\citeyearpar{MR3557191} (see also the references therein). These results do not include robustness but Chernozhukov and Hong~\citeyearpar{MR1984779} proved some analogues of the Bernstein-von Mises theorem under suitable assumptions on the model and loss function. The use of {\em fractional likelihoods} by Jiang and Tanner~\citeyearpar{MR2458185} was motivated by the problem of misspecification. In a sparse parametric framework (the true parameter $\theta\in\R^{d}$ has a small number of nonzero components), Atchad\'e~\citeyearpar{MR3718168} replaces the joint density $f_{n,\theta}$ of the observations by a suitable function $q_{n,\theta}$. Together with a prior that forces sparsity, this results in tractable and consistent procedures for high-dimensional parametric problems. All the cited results are of an asymptotic nature contrary to the next one. Bhattacharya, Pati and Yang~\citeyearpar{MR3909926} investigate the replacement, in the definition of the posterior, of the likelihood by a fractional one, also considering the case of misspecified models, but use what they call $\alpha$-divergences instead of the KL one (but which may also be infinite) to evaluate the amount of misspecification.

Closer to our approach is the PAC-Bayesian one that has been developed  by Olivier Catoni~\citeyearpar{MR2483528} and our parameter $\beta$ in the definition of the $\rho$-posterior~\eref{Eq-psB} refers to the (inverse) of the so-called temperature parameter in the definition of the Gibbs measure. This parameter essentially plays no role in our results.

The paper is organized as follows. In Section~\ref{N}, we describe our framework and state our main assumption that allows to solve the measurability issues that are inherent to the construction of the posterior. An account of what can be achieved with a $\rho$-posterior distribution is presented and commented in Section~\ref{sect-flavour} in the density and regression frameworks (with a random design). 
Our main result can be found in Section~\ref{M} where we present the concentration properties of our $\rho$-posterior distribution. These properties involve two quantities, one which depends on the choice of the prior while the other is independent of it but depends on the model and the true density. We show how one can control these quantities in Sections~\ref{Sect-UPBeta} and~\ref{Sect-UPBeps} respectively, giving there illustrative examples as well as general theorems that can be applied to many parametric models of interest. 
Our results on the connection between the classical Bayes posterior and the $\rho$-one are presented in Section~\ref{RB}. We show that under suitable assumptions on the density model and the prior, the Hellinger distance between these two distributions tends to 0 at rate $n^{-1/4}(\log n)^{3/4}$ as the sample size $n$ tends to infinity. In particular, this result shows that under suitable assumptions our $\rho$-Bayes posterior satisfies a Bernstein-von Mises Theorem. The problem of a hierarchical prior or, equivalently, that of model selection is handled in Section~\ref{MM}. The proofs and discussions about measurability issues are to be found in the supplemental article [Baraud and Birg\'e~\citeyearpar{supp-bayes}] while additional results and examples can be found in the original version of this paper, Baraud and Birg\'e~\citeyearpar{Baraud:2017aa}.

\section{Framework, notations and basic assumptions}\label{N}

\subsection{The framework and the basic notations}\label{N1}
We actually want to deal with more general situations than the one we presented in the introduction, namely the case of independent but possibly non-i.i.d.\ observations, even though the Statistician assumes them to be i.i.d. By doing so, our aim is to emphasize the robustness property of our $\rho$-posterior distribution with respect to the assumption that the data are i.i.d. This generalization leads to the following statistical framework. For $n\in\N\et=\N\setminus\{0\}$, we observe a random variable $\gX=(X_{1},\ldots,X_{n})$ defined on $(\Omega,\Xi)$, where the 
$X_{i}$ are independent with values in a measurable space $(\sX,\sA)$ endowed with a $\sigma$-finite mesure $\mu$. We denote by $\sL$ the set of all probability densities $u$ with respect to $\mu$ (which means that $u$ is a nonnegative measurable function on $\sX$ such that $\int_{\sX}u(x)\,d\mu(x)=1$) and by $P_{u}=u\cdot\mu$ the probability on $(\sX,\sA)$ with density $u\in\sL$. We assume that for each $i\in\{1,\dots,n\}$, $X_{i}$ admits a density with respect to $\mu$, i.e.\ has distribution $P_{s_{i}}=s_{i}\cdot \mu$ with $s_{i}\in\sL$. We set $\gs=(s_{1},\ldots,s_{n})$ and denote by $\P_{\gs}$ the probability on $(\Omega,\Xi)$ that gives $\gX$ the distribution $\gP_{\gs}=\bigotimes_{i=1}^nP_{s_{i}}$ on $\sX^{n}$ and by $\E_{\gs}$ the corresponding expectation. We shall abusively refer to $\gs$ as the (true) density of $\bsX$. 

We denote by $|A|$ the cardinality of a finite set $A$ and use the word {\it countable} for {\it finite or countable}. Parametric models will be indexed by some subset $\bs{\Theta}$ of $\R^{d}$ and $|\cdot|$ will denote the Euclidean norm on $\R^{d}$. Finally, we shall often use the inequalities
\begin{equation}
2ab\le\alpha a^{2}+\alpha^{-1}b^{2};\quad(a+b)^{2}\le(1+\alpha)a^{2}+
(1+\alpha^{-1})b^{2}\quad\mbox{for all }\alpha>0.
\label{eq-2ab}
\end{equation}

\subsection{Hellinger type metrics}\label{N2}
For all $t,t'\in\sL$, we shall write $h(t,t')$ and $\rho(t,t')$ for the Hellinger distance and affinity between $P_{t}$ and $P_{t'}$. We recall that the Hellinger distance and affinity between two probabilities $P,Q$ on a measurable space $(\sX,\sA)$ are given respectively by
\[
h(P,Q)=\cro{{1\over 2}\int_{\sX}\left(\sqrt{{dP\over d\nu}}-\sqrt{{dQ\over d\nu}}\right)^{2}d\nu}^{1/2};\quad\rho(P,Q)=\int_\sX\sqrt{{dP\over d\nu} {dQ\over d\nu}}\,d\nu,
\]
where $\nu$ denotes an arbitrary measure which dominates both $P$ and $Q$, the result being independent of the choice of $\nu$. It is well-known since Le Cam~\citeyearpar{MR0334381} that $0\le\rho(P,Q)=1-h^2(P,Q)$ and that the Hellinger distance is related to the total variation distance by the following inequalities:
\begin{equation}\label{eq-h-TV}
h^2(P,Q)\le\sup_{A\in\cA}|P(A)-Q(A)|\le h(P,Q)\sqrt{2-h^2(P,Q)}\le\sqrt{2}h(P,Q).
\end{equation}
Therefore robustness with respect to the Hellinger distance implies robustness with respect to the total variation distance. 

The Hellinger closed ball centred at $t\in\sL$ with radius $r>0$ is denoted $\sB(t,r)$ and, for $\gs\in\sL^{n}$, we define
\[
\sB(\gs,r)=\ac{t\in\sL,\; h^{2}(\gs,t)\le r^{2}}\quad \text{with}\quad h^{2}(\gs,t)=\frac{1}{n}\sum_{i=1}^{n}h^{2}(s_{i},t)\le1.
\]
Then, for $S\subset \sL$, we set $\sB^{S}(t,r)=S\cap \sB(t,r)$ and $\sB^{S}(\gs,r)=S\cap\sB(\gs,r)$.
If the $X_{i}$ are truly i.i.d.\ with  density $s$, $\gs=(s,\ldots,s)$ and $h^{2}(\gs,t)=h^{2}(s,t)$ for all $t\in \sL$, hence $\sB^{S}(\gs,r)=\sB^{S}(s,r)$. 

Note that although $h$ is a genuine distance on the space of all probabilities on $\sX$, therefore on $\{P_{t},\ t\in\sL\}$, it is only a {\em pseudo-distance} on $\sL$ itself since $h(t,t')=0$ if $t\ne t'$ but $t=t'$ $\mu$-a.e. For simplicity, we shall nevertheless still call $h$ a distance on $\sL$ and set $h(t,A)=\inf_{u\in A} h(t,u)$ for the distance of a point $t\in\sL$ to the subset $A$ of $\sL$. Similarly $h(\gs,A)=\inf_{t\in A}h(\gs,t)$. We recall that a pseudo-distance $d$ satisfies the axioms of a distance apart from the fact that one may have $d(x,y)=0$ with $x\ne y$.

\subsection{Models and main assumptions}\label{N4}
We consider a {\em density model} $\overline S$, i.e.\ a subset of $\sL$, acting as if the data were i.i.d., and our aim is to estimate the $n$-uple $\gs=(s_{1},\ldots,s_{n})$ from the observation of $\gX$ on the basis of this model. Adopting the Bayesian paradigm, we endow $\overline S$ with a $\sigma$-algebra $\sS$ as well as a prior $\pi$ on $(\overline S,\sS)$. 
There is no reason for $t\mapsto \gPsi(\gX,t,t')$ defined by~\eref{eq-psi} and $t\mapsto \sup_{t'\in \overline S}\gPsi(\gX,t,t')$ to be measurable functions of $t$ on $(\overline S,\sS)$ and the function $\omega \mapsto \sup_{t'\in \overline S}\gPsi(\gX(\omega),t,t')$ to be a random variable on $(\Omega,\Xi)$. Therefore our $\rho$-posterior distribution $\pi_{\!\gX}$, as given by (\ref{Eq-psB}), might not be well-defined. 
In order to overcome these difficulties, we introduce the following assumption and also slightly modify the definition of our $\rho$-posterior distribution that was originally given by~\eref{Eq-psB} in the density framework. 
The following assumption ensures that the sets and random variables that we shall introduce later are suitably measurable. We refer the reader to the supplemental article [Baraud and Birg\'e~\citeyearpar{supp-bayes}] for a discussion about Assumption~\ref{hypo-mes} and how it can be checked on examples. 
%
\begin{ass}\label{hypo-mes}\mbox{}\vspace{-3mm}
\begin{listi}
\item\label{Hi} The function $(x,t)\to t(x)$ on $\sX\times\overline{S}$ is measurable with respect to the $\sigma$-algebra $\sA\otimes\sS$.
\item\label{Hii} There exists a countable subset $S$ of $\overline S$ and, given $t\in\overline{S}$ and $t'\in S$, one can find a sequence $(t_{k})_{k\ge0}$ in $S$ such that, for all $x\in\sX$,
\begin{equation}
\lim_{k\to +\infty}t_{k}(x)=t(x)\quad\mbox{and}\quad
\lim_{k\to +\infty}\psi\left(\sqrt{t'(x)\over t_{k}(x)}\right)=\psi\left(\sqrt{t'(x)\over t(x)}\right).
\label{eq-t-t_k}
\end{equation}
%
\end{listi}
\end{ass}
Note that it follows from Proposition~\ref{prop-ass1} in the supplemental article [Baraud and Birg\'e~\citeyearpar{supp-bayes}]  that $S$ is dense in $\overline S$ with respect to the distance $h$. 
Of course, when $\overline{S}$ is countable, we shall set $S=\overline{S}$ without further notice and Assumption~\ref{hypo-mes}-$\ref{Hii}$ will be automatically satisfied with the $\sigma$-algebra $\sS$ gathering all the subsets of $\overline S$.  In the sequel, we shall always assume that the set $S$ associated to the model $\overline{S}$ has been fixed once and for all. 

The following proposition (to be proven in the supplemental article) ensures that the measurability properties required for a proper definition of the posterior distribution hold.
%
\begin{prop}\label{prop-mes}
Under Assumption~\ref{hypo-mes}, given $t'\in S$ and $\gPsi\left(\gx,t,t'\right)$ defined by~\eref{eq-psi}, the functions
\[
(\gx,t)\mapsto\gPsi\left(\gx,t,t'\right)\quad\text{and}\quad
(\gx,t)\mapsto\gPsi\left(\gx,t\right)=\sup_{u\in S}\gPsi\left(\gx,t,u\right)
\]
are measurable with respect to the $\sigma$-algebra $\sA\otimes\sS$. Hence the function
\[
\gx\mapsto\int_{\overline S}\exp\left[-\beta\gPsi(\gx,t)\right]d\pi(t)
\]
is measurable with respect to $\sA$ and the function $t\mapsto h(t,s)$ is measurable with respect to $\sS$ whatever $s\in\sL$.
\end{prop}
%

\subsection{The $\rho$-posterior distribution $\pi_{\!\gX}$}
Let $S$ be the countable subset of $\overline{S}$ provided by Assumption~\ref{hypo-mes}. For $\omega\in\Omega$ and $\beta>0$, we define the distribution $\pi_{\!\gX(\omega)}$ on $\overline{S}$ by its density with respect to the prior $\pi$:
\begin{equation}\label{eq-LPS2}
{d\pi_{\!\gX(\omega)}\over d\pi}(t)=g(t|\gX(\omega))=\frac{\exp\left[-\beta\gPsi\left(\gX(\omega),t\right)\right]}{\dps{\int_{\overline{S}}}\exp\left[-\beta\gPsi\left(\gX(\omega),t'\right)\right]d\pi(t')}.
\end{equation}
Proposition~\ref{prop-mes} implies that the function $(\omega,t)\mapsto g(t|\gX(\omega))$ is measurable with respect to the $\sigma$-algebra $\Xi\otimes\sS$. We recall that the choice of $\beta=4$ leads to an analogue of the classical Bayes posterior since the function $x\mapsto4\psi\left(\sqrt{x}\right)$ is close to $\log x$ as soon as $x$ is not far from one. Throughout the paper the parameter $\beta$ will remain fixed and part of our results will depend on it.
%
\begin{df}\label{D-Bayesrho}
The method that leads from the set $ \overline{S}$ and the prior $\pi$ on $\overline{S}$ to the distribution $\pi_{\!\gX}$ (and all related estimators) will be called {\em $\rho$-Bayes estimation} and $\pi_{\!\gX}$ is the {\em $\rho$-posterior} distribution.
\end{df}
\section{A flavour of what a $\rho$-Bayes procedure can achieve\label{sect-flavour}}
Throughout this section we take $\beta=4$, the value for which the $\rho$-posterior distribution is the analogue of the classical Bayes posterior.

\subsection{The density framework}
Let $\overline S$ be a density model for the supposed common density of our observations $X_{1},\ldots,X_{n}$ and consider the following entropy condition.
\begin{ass}\label{hypo-simple-entro}
There exists a non-increasing function $H$ from $(0,1]$ to $[3,+\infty)$ such that, for any $\eps\in (0,1]$, there exists a subset $S_{\eps}$ of $\overline S$ with cardinality not larger than $\exp\cro{H(\eps)}$ and such that $h(t,S_{\eps})\le \eps$ for all $t\in\overline S$.
\end{ass}
%
\begin{prop}\label{prop-densite}
Let $\overline{S}$ satisfy Assumption~\ref{hypo-simple-entro} and $\varepsilon_{n}$ be such that 
\begin{equation}\label{def-epsn}
\eps_{n}\ge 1/\left(2\sqrt{n}\right)\qquad \text{and}\qquad H(\eps_{n})\le\left(4\cdot 10^{-6}\right)n\eps_{n}^{2}.
%
\end{equation}
There exists a prior $\pi$ on $\overline S$ (depending on $\eps_{n}$ only) such that, whatever the true density $\gs=(s_{1},\dots,s_{n})$ and $\xi>0$, there exists a measurable subset $\Omega_{\xi}$ of $\Omega$ satisfying $\P_{\gs}(\Omega_{\xi})\ge1-e^{-\xi}$ and for all $\omega\in \Omega_{\xi}$,
%
\[
\pi_{\!\gX(\omega)}\pa{\ac{t\in\overline S,\; h(\gs,t)\le C\overline r_{n}}}\ge 1-e^{-\xi'}\quad \text{for all $\xi'>0$},
\]
with $C$ a positive universal constant and
\[
\overline r_{n}=h(\gs,\overline S)+\eps_{n}+\sqrt{\frac{\xi+\xi'}{n}}.
\]
In particular, if $X_{1},\ldots,X_{n}$ are truly i.i.d.\ with density $s\in\sL$,
\[
\pi_{\!\gX(\omega)}\pa{\sB^{\overline S}(s,C\overline r_{n})}\ge 1-e^{-\xi'}\quad \text{with}\quad \overline r_{n}=h(s,\overline S)+\eps_{n}+\sqrt{\frac{\xi+\xi'}{n}}.
\]
\end{prop}

This result shows that with probability close to 1, the $\rho$-posterior distribution concentrates around points $t$ in the density model $\overline S$ which satisfy 
\[
\cro{\frac{1}{n}\sum_{i=1}^{n}h^{2}(s_{i},t)}^{1/2}\le C\overline r_{n}\quad \text{with $\overline r_{n}$ of order}\quad h(\gs,\overline S)+\eps_{n}.
\]
The quantity $\eps_{n}$ corresponds to the concentration rate we get when the $X_{i}$ are truly i.i.d.\ with density in $\overline S$. For instance, when $H(\eps)=A\eps^{-V}$ for all $\eps>0$ and some constants $A,V>0$, $\eps_{n}$ is of order $n^{-1/(V+2)}$.   
This concentration rate remains of the same order as long as $h(\gs,\overline S)$ is small enough compared to $\eps_{n}$, which is actually possible even when none of the densities $s_{i}$ belongs to $\overline S$. This stability result accounts for the robustness property of our procedure. 

It is well-known --- see for instance Birg\'e~\citeyearpar{MR722129} and \citeyearpar{MR816706} --- that, in many cases, the smallest value of $\eps_{n}$ which satisfies (\ref{def-epsn}) corresponds to the minimax rate of estimation (with respect to $n$) over $\overline S$. Here are two typical illustrations for densities with respect to the Lebesgue measure. 

i) Assume that $\overline S$ is the set of all non-increasing densities on $[0,1]$ which are bounded by $M<+\infty$. Of course, if $s\in\overline{S}$, $\sqrt{s}$ is also non-increasing and is bounded by $\sqrt{M}$ and the Hellinger entropy of $\overline{S}$ corresponds to the $\L_{2}$-entropy of the set $\left\{\sqrt{s},\;s\in\overline{S}\right\}$ which is known from van de Geer~\citeyearpar{MR1739079} to be bounded by $A\varepsilon^{-1}$ leading to an $\varepsilon_{n}$ of order $n^{-1/3}$ which is known to be the minimax rate for this problem. 

ii) If $\overline{S}$ is the set of $\alpha$-H\"olderian densities on $[0,1]^{d}$ with $\alpha>0$, its Hellinger entropy is known from Birg\'e~\citeyearpar{MR816706} to be of order $\varepsilon^{-2d/\alpha}$ leading to a convergence rate with respect to $n$ of order $n^{-\alpha/2(\alpha+d)}$. All details can be found in Birg\'e~\citeyearpar{MR816706} (see in particular his Corollary~3.2) where it is also proved that this rate is minimax (see his Proposition~4.3).

Assumption~\ref{hypo-simple-entro} can actually be replaced by the more general one that $\overline S$ admits a metric dimension $D$ (according to Definition~\ref{def-MD} below) in which case the same conclusion holds with $\eps_{n}\ge 1/(2\sqrt{n})$ satisfying $D(\eps_{n})\le 10^{-6}n\eps_{n}^{2}$. 

\subsection{The regression framework\label{sect-reg}}
We observe i.i.d.\ pairs $X_{i}=(W_{i},Y_{i})$ with values in $\sW\times \R$ drawn from the regression model 
\[
Y_{i}=f\et(W_{i})+\eps_{i}\quad \text{for $i=1,\ldots,n$.}
\]
We assume that the regression function $f\et$ is bounded in supnorm (denoted $\norm{\cdot}_{\infty}$) by some known number $B>0$, that the $W_{i}$ are i.i.d.\ with unknown distribution $P_{W}$ on $\sW$ and the $\eps_{i}$ are i.i.d.\ with unknown density $p$ with respect to the Lebesgue measure $\lambda$ on $\R$.  

We consider a model $\overline \cF$ for $f\et$ which is a set of functions on $\sW$ satisfying the following property.  
%
\begin{ass}\label{hypo-simple-entro2}
For all $f\in\overline \cF$, $\norm{f}_{\infty}\le B$ and there exists a non-increasing function $H$ on $[3,+\infty)$ such that for all $\eps>0$, one can find a subset $\cF_{\eps}\subset \overline \cF$ with cardinality not larger than $\exp\cro{H(\eps)}$ which satisfies $\inf_{g\in\cF_{\eps}}\norm{f-g}_{\infty}\le \eps$ for all $f\in\overline \cF$.
\end{ass}

The density $p$ being unknown, we consider a candidate density $q$ for $p$. Denoting by $q_{\delta}$ the translated density $q_{\delta}(\cdot)=q(\cdot-\delta)$ for $\delta\in\R$, we assume that $q$ is of order $\alpha\in (-1,1]$, i.e.\ satisfies, for some constant $a\ge 1$,
\begin{equation}
a^{-1}\cro{|\delta|^{1+\alpha}\wedge a^{-1}}\le h^{2}(q_{\delta},q)\le a\cro{|\delta|^{1+\alpha}\wedge a^{-1}}\ \ \mbox{for all}\ \delta\in\R.
\label{regul}
\end{equation}
Note that the mapping $\delta\mapsto q_{\delta}$ is one-to-one. 

For $f\in\overline\cF$, we denote by $q_{f}$ the density of $X_{1}$ (with respect to $\mu=P_{W}\otimes \lambda$) when $p=q$ and $f\et=f$ which is given by $q_{f}(w,y)=q(y-f(w))$. The set $\overline S=\{q_{f},\; f\in\overline\cF\}$ is a density model for the true density $s$ of $X_{1}$. A prior $\pi'$ on $\overline\cF$ induces a prior $\pi$ on $\overline S$ by taking the image of $\pi'$ by the mapping $f\mapsto q_{f}$. In turn, the $\rho$-posterior $\pi_{\bsX}$ on $(\overline S,\pi)$ induces a $\rho$-posterior distribution $\pi_{\bsX}'$ on $\overline\cF$ which is the image of $\pi_{\bsX}$ by the reciprocal mapping $q_{f}\mapsto f$. Let us choose as our loss function on $\overline \cF$  
\[
\norm{f\et-f}_{1+\alpha}= \pa{\int_{\sW}\ab{f\et-f}^{1+\alpha}dP_{W}}^{1/(1+\alpha)}\quad \text{for $f\in\overline \cF$.}
\]
%
\begin{prop}\label{prop-reg}
Let Assumption~\ref{hypo-simple-entro2} hold, $q$ satisfy \eref{regul} for some $a\ge 1$ and $\alpha\in (-1,1]$ and let $\eps_{n}$ satisfy
\begin{equation}\label{eq-epsreg}
\eps_{n}\ge 1/\left(2\sqrt{n}\right)\quad \text{and}\quad H\cro{\left(\eps_{n}^{2}/a\right)^{1/(1+\alpha)}}\le \left(4\cdot10^{-6}\right)n\eps_{n}^{2}.
\end{equation}
There exists a prior $\pi'$ on $\overline \cF$ (which only depends on $\eps_{n}$) such that,  whatever the function $f\et$ bounded by $B$, whatever the distribution $P_{W}$, whatever the density $p$ and the positive number $\xi$,  there exists a measurable subset $\Omega_{\xi}$ of $\Omega$ satisfying $\P_{\gs}(\Omega_{\xi})\ge1-e^{-\xi}$ and for all $\omega\in \Omega_{\xi}$,
%
%
\[
\pi_{\!\gX(\omega)}'\pa{\ac{f\in\overline \cF,\; \norm{f\et-f}_{1+\alpha}\le C\overline r^{2/(1+\alpha)}_{n}}}\ge 1-e^{-\xi'}\quad \text{for all $\xi'>0$}
\]
where 
\begin{equation}\label{def-breg}
\overline r_{n}^{2}=h^{2}(p,q)+\inf_{f\in\overline\cF}\norm{f\et-f}_{\infty}^{1+\alpha}+\eps_{n}^{2}+\frac{\xi+\xi'}{n},
\end{equation}
for some constant $C>0$ depending on $a,B$ and $\alpha$ only.
\end{prop}

Let us first emphasize the fact that neither the prior nor the construction of the posterior requires the knowledge of the distribution of the design $P_{W}$ or any assumption about it. The result shows that with probability close to 1 the $\rho$-posterior on $\overline \cF$ concentrates around functions $f\in\overline \cF$ for which  $\norm{f-f\et}_{1+\alpha}$ is of order $\cro{h^{2}(p,q)+\inf_{f\in\overline\cF}\norm{f\et-f}_{\infty}^{1+\alpha}+\eps_{n}^{2}}^{1/(1+\alpha)}$.  The quantity $\eps_{n}^{2/(1+\alpha)}$ corresponds to the concentration rate we get when $p$ is equal to $q$ and $f\et$ belongs to $\overline \cF$ while the terms $\inf_{f\in\overline\cF}\norm{f\et-f}_{\infty}^{1+\alpha}$ and $h^{2}(p,q)$ account for the robustness of the procedure with respect to a misspecification of the class $\overline \cF$ of the regression functions and the noise distribution respectively. The loss and the quantity $\eps_{n}$ depend on the specific features of the chosen density $q$. 

When $H(\eps)=A\eps^{-V}$ for some constants $A,V>0$, then
\[
\eps_{n}^{2/(1+\alpha)}=C'n^{-1/(V+1+\alpha)}
\]
where $C'>0$ depends on $A,a,\alpha$ and $V$ only. We refer to Ibragimov and Has{'}minski{\u\i}~\citeyearpar{MR620321} Chapter VI p.~281 
for  sufficient conditions on the density $q$ to be of order $\alpha$. For illustration, when $q$ is Gaussian, $\alpha=1$, the loss corresponds to the $\L_{2}(P_{W})$-norm and $\eps_{n}^{2/(1+\alpha)}=\eps_{n}$ is of order $n^{-1/(V+2)}$; when $q$ is the uniform density on an interval, $\alpha=0$, the loss corresponds to the $\L_{1}(P_{W})$-norm and $\eps_{n}^{2/(1+\alpha)}=\eps_{n}^{2}$ is of order $n^{-1/(V+1)}$. 

When $\overline \cF$ is a subset of the $\L_{\infty}(P_{W})$-ball with radius $B$ and center 0 of a linear space with dimension $d\ge 1$,  a classical result on the entropy of balls in a finite dimensional linear space implies that Assumption~\ref{hypo-simple-entro2} is satisfied with $H(\eps)=d\log(1+[2B/\eps])$ which leads to an upper bound for $\eps_{n}^{2/(1+\alpha)}$ of order $[d\log(nB/d)/n]^{1/(1+\alpha)}$. Note that this rate is faster than the usual parametric rate $1/\sqrt{n}$ when $\alpha\in (-1,1)$. 

Choosing a specific density $q$ and a single model $\overline \cF$ for $f\et$ is usually not enough for many  applications. It is however possible to mix up several choices of $q$ and $\overline \cF$ by using a hierarchical prior as we shall show in Section~\ref{MM} and by arguing as in BBS, Sections~7.2 and~7.3.

\section{Our main results}\label{M}
Our main results and definitions involve some numerical constants that we list below for further reference.
\begin{equation}
\left\{
\begin{array}{l}  c_{0}=10^{3};\qquad c_{1}=15;\qquad c_{2}=16;\qquad c_{3}=0.62;
\vspace{2mm}\\ c_{4}=3.5\max\{375;\beta^{-1/2}\};\qquad
c_{5}=16\times10^{-3};\qquad c_{6}=7\times10^{4};\vspace{2mm}\\
c_{7}=4.01;\qquad c_{8}=0.365;\qquad c_{9}=c_{8}^{-1}\left[(2c_{6})\vee \beta^{-1}\right];
\vspace{2mm}\\\overline{c}_{n}=1+[(\log 2)/\log(en)];\qquad\gamma=\beta/8.\end{array}\right.
\label{eq-constantes}
\end{equation}
The properties of $\pi_{\!\gX}$ actually depend on two quantities, namely $\eps_{n}^{\overline{S}}(\gs)$ and $\eta_{n}^{\overline{S},\pi}(t)$ for $t\in\overline S$, that we shall now define. The former only depends on $\overline{S}$ via $S$ and also possibly on $\gs$ while the latter depends on the choice of the prior $\pi$ but not on $\gs$. 

\subsection{The quantity $\eps_{n}^{\overline{S}}(\gs)$}\label{sect-eps}
Given $\bsX$ with distribution $\gP_{\gs}$ and $y>0$, we set
\[
\bsZ(\gX,t,t')=\gPsi(\gX,t,t')-\E_{\gs}\left[\gPsi(\gX,t,t')\right],
\]
and
\[
\gw^{\overline{S}}(\gs,y)=\E_{\gs}\left[\sup_{t,t'\in\sB^{S}(\gs,y)}\left|\bsZ(\gX,t,t')\right|\right]\text{ with the convention }\sup_{\varnothing }=0.
\]
%
Note that $\gw^{\overline{S}}(\gs,y)=\gw^{\overline{S}}(\gs,1)$ for $y>1$. We then define  $\eps_{n}^{\overline{S}}(\gs)$ as
\begin{equation}
\eps_{n}^{\overline{S}}(\gs)=\sup\left\{y>0\,\left|\,\gw^{\overline{S}}(\gs,y)>
6c_{0}^{-1}ny^{2}\right.\right\}\bigvee\frac{1}{\sqrt{n}}\;\mbox{ with }\;\sup\varnothing=0.
\label{def-en}
\end{equation}
Since the function $\psi$ is bounded by 1, $\gw^{\overline{S}}(\gs,y)$ is not larger than $2n$ hence $\eps_{n}^{\overline{S}}(\gs)$ is not larger than $(c_{0}/3)^{1/2}$. The quantity $\eps_{n}^{\overline{S}}(\gs)$ measures in some sense the massiveness of the set $S$. In particular, if $S\subset S'$, $\eps_{n}^{\overline{S}}(\gs)\le \eps_{n}^{\overline{S}'}(\gs)$.

\subsection{The quantity $\eta_{n}^{\overline{S},\gpi}(t)$}\label{sect-eta}
%
\begin{df}\label{def-eta}
Let $\gamma=\beta/8$. Given the prior $\gpi$ on the model $\overline{S}$, we define the function $\eta_{n}^{\overline{S},\gpi}$ on $\overline{S}$ by 
\[
\eta_{n}^{\overline{S},\gpi}(t)=\sup\left\{\eta\in(0,1]\,\left|\,\gpi\left(\sB^{\overline{S}}(t,2\eta)\right)>\exp\left[\gamma n\eta^{2}\right]\gpi\left(\sB^{\overline{S}}(t,\eta)\right)\right.\right\},
\]
with the convention $\sup \varnothing=0$.
\end{df}
Note that $\eta_{n}^{\overline{S},\gpi}(t)\le 1$ since $\gpi\!\left(\sB^{\overline{S}}\left(t,r\right)\right)=\pi(\overline S)=1$ for $r\ge1$ and that
\begin{equation}
\gpi\left(\sB^{\overline{S}}(t,2r)\right)\le\exp\left[\gamma nr^{2}\right]\gpi\left(\sB^{\overline{S}}(t,r)\right)\quad\text{for all }r\in\left[\eta_{n}^{\overline{S},\gpi}(t),1\right].
\label{eq-eta2}
\end{equation}
This inequality indeed holds by definition for $r>\eta_{n}^{\overline{S},\gpi}(t)$, which implies by monotonicity that it also holds for $r=\eta_{n}^{\overline{S},\gpi}(t)$.
%
Then, if $0<\eta\le1$ and
\begin{equation}
\gpi\left(\sB^{\overline{S}}(t,2r)\right)\le \exp\left[\gamma nr^{2}\right]\gpi\left(\sB^{\overline{S}}(t,r)\right)\quad\mbox{for all }r\in[\eta,1],
\label{Eq-eta}
\end{equation}
it follows from (\ref{eq-eta2}) that $\eta_{n}^{\overline{S},\gpi}(t)\le \eta$. 

The quantity $\eta_{n}^{\overline{S},\gpi}(t)$ corresponds to some critical radius over which the $\gpi$-probability of balls centred at $t$ does not increase too quickly. In particular, if the prior puts enough mass on a small neighbourhood of $t$, $\eta_{n}^{\overline{S},\gpi}(t)$ is small. Indeed, since $\gpi\left(\sB^{\overline{S}}(t,2r)\right)\le1$ for all $r>0$, the inequality 
\begin{equation}\label{eq-minmass}
\gpi\!\left(\sB^{\overline{S}}(t,\eta)\right)\ge\exp\left[-\gamma n\eta^{2}\right]\quad \text{for some $\eta\in(0,1]$}
\end{equation}
implies that, for $1\ge r\ge \eta$,
\[
\gpi\!\left(\sB^{\overline{S}}(t,r)\right)\ge\exp\left[-\gamma nr^{2}\right]\ge\gpi\left(\sB^{\overline{S}}(t,2r)\right)\exp\left[-\gamma nr^{2}\right],
\]
hence that $\eta_{n}^{\overline{S},\gpi}(t)\le \eta$. However, the upper bounds on $\eta_{n}^{\overline{S},\gpi}(t)$ that are derived from \eref{eq-minmass} are usually less accurate than those derived from \eref{Eq-eta}.

\subsection{Our main theorem}\label{MThm}
The concentration properties of the $\rho$-posterior distribution $\pi_{\!\gX}$ are given by the following theorem.
\begin{thm}\label{thm-main}
Let Assumption~\ref{hypo-mes} be satisfied. Then, whatever the true density $\gs$ of $\gX$ and $\xi>0$, there exists a measurable subset $\Omega_{\xi}$ of $\Omega$ with $\P_{\gs}(\Omega_{\xi})\ge1-e^{-\xi}$ such that
\begin{equation}
\gpi_{\!\gX(\omega)}\pa{\sB^{\overline S}(\gs,r)}
\ge1-e^{-\xi'}\quad \text{for all }\omega\in \Omega_{\xi},\;\xi'>0\,\text{ and }\,r\ge\overline r_{n}
\label{Eq-main}
\end{equation}
with 
\begin{equation}\label{def-rb}
\overline r_{n}=\inf_{t\in\overline{S}}\left[c_{1}h(\gs,t)+c_{2}\eta_{n}^{\overline{S},\gpi}(t)\right]+c_{3}\eps_{n}^{\overline{S}}(\gs)+c_{4}\sqrt{\frac{\xi+\xi'+2.61}{n}}.
\end{equation}
The constants $c_{j}$, $1\le j\le4$ are given in (\ref{eq-constantes}) and actually universal as soon as  $\beta\ge 7.2\times10^{-6}$. 
%
\end{thm}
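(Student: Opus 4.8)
The strategy is the one typical for $\rho$-type estimation: control the "variance" part of the process $\gPsi$ by a uniform deviation inequality (here, Talagrand's inequality restricted to the countable set $\gS$), combine this with a deterministic comparison of the "expectation" part $\E_\gs[\gPsi(\gX,\gt,\gt')]$ against the Hellinger loss $\gh$, and finally integrate these pointwise controls against the prior $\gpi$ to turn them into a statement about the mass that $\gpi_{\!\gX}$ assigns to the complement of a Hellinger ball. First I would record the two analytic facts about $\psi$ that drive everything: boundedness $|\psi|\le 1$ and the Lipschitz bound with constant $2$, together with the comparison $-a\gh^2(\gt,\gt')\le\E_\gs[\gPsi(\gX,\gt,\gt')]\le -b\gh^2(\gt,\gt')+c\,\gh^2(\gs,\gt)$ for suitable universal constants (this is the analogue of the key deterministic lemma in Baraud--Birgé, obtained by expanding $\psi(\sqrt{t'/t})$ and using $\rho(t,t')=1-h^2(t,t')$); the $\gh(\gs,\gt)$ term is what produces the misspecification-robustness, i.e. the $c_1\gh(\gs,\gt)$ in $\overline r$.

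The next step is the stochastic control. On a single slice $\sB^\gS(\gs,y)$ one has, by Talagrand's inequality applied to the supremum of $\bsZ(\gX,\gt,\gt')$ over the countable index set (using $|\psi|\le1$ for the sup-norm bound and the Lipschitz property to control the weak variance by something like $\gh^2$), a deviation bound of the form $\sup|\bsZ|\le \gw^{\overline\gS}(\gs,y)+\kappa_1 y\sqrt{\xi}+\kappa_2\xi$ on an event of probability $\ge 1-e^{-\xi}$. By the definition $(\ref{def-en})$ of $\eps^{\overline\gS}(\gs)$, for $y\ge\eps^{\overline\gS}(\gs)$ we have $\gw^{\overline\gS}(\gs,y)\le 6c_0^{-1}y^2$, so the deviation is at most $6c_0^{-1}y^2+\kappa_1 y\sqrt\xi+\kappa_2\xi$. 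Then I would peel: cover $\overline\gS$ by the annuli $\sB^\gS(\gs,2^{k+1}y_0)\setminus\sB^\gS(\gs,2^k y_0)$ for a suitable base radius $y_0\gtrsim\eps^{\overline\gS}(\gs)+\sqrt\xi$, apply the slice bound on each with $\xi$ replaced by $\xi+(\text{something growing with }k)$ to make the union bound over $k$ summable, and thereby obtain a single event $\Omega_\xi$ of probability $\ge1-e^{-\xi}$ on which, simultaneously for all $\gt,\gt'\in\overline\gS$,
\[
\bigl|\gPsi(\gX,\gt,\gt')-\E_\gs[\gPsi(\gX,\gt,\gt')]\bigr|\le a_0\bigl(\gh^2(\gs,\gt)\vee\gh^2(\gs,\gt')\vee\eps^{\overline\gS}(\gs)^2\vee(\xi+1)\bigr)
\]
with $a_0$ small (of order $c_0^{-1}$). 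Combined with the deterministic comparison, this yields on $\Omega_\xi$ a bound $\gPsi(\gX,\gt,\gt')\le -b'\gh^2(\gt,\gt')+c'\gh^2(\gs,\gt)+b'\gh^2(\gs,\gt')+(\text{small})\,\bigl(\eps^{\overline\gS}(\gs)^2+\xi+1\bigr)$, and in particular, taking the sup over $\gt'$, an upper bound on $\gPsi(\gX,\gt)$ and a lower bound on $\gPsi(\gX,\gt')$ for $\gt'$ near $\gs$.

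Now I would convert this into the posterior statement. Fix any $\gt_0\in\overline\gS$; write $\eta_0=\eta^{\overline\gS,\gpi}(\gt_0)$ and bound
\[
\gpi_{\!\gX}\bigl(\sB^{\overline\gS}(\gs,r)^c\bigr)=\frac{\int_{\sB^{\overline\gS}(\gs,r)^c}e^{-\beta\gPsi(\gX,\gt)}\,d\gpi(\gt)}{\int_{\overline\gS}e^{-\beta\gPsi(\gX,\gt)}\,d\gpi(\gt)}.
\]
The denominator is bounded below by restricting the integral to $\sB^{\overline\gS}(\gt_0,\eta_0)$, on which $\gPsi(\gX,\gt)$ is controlled from above (using that $\gt$ is within $\eta_0$ of $\gt_0$ and $\gt_0$ within $\gh(\gs,\gt_0)$ of $\gs$), giving denominator $\ge\gpi(\sB^{\overline\gS}(\gt_0,\eta_0))\exp[-(\text{stuff involving }\gh(\gs,\gt_0),\eta_0,\eps^{\overline\gS}(\gs),\xi)]$. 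For the numerator, peel $\sB^{\overline\gS}(\gs,r)^c$ into shells $\sB^{\overline\gS}(\gs,2^{j+1}r)\setminus\sB^{\overline\gS}(\gs,2^j r)$; on shell $j$, $\gPsi(\gX,\gt)\ge b'(2^j r)^2-(\text{small terms})$ from the lower comparison with $\gt'=\gt$ beaten against $\gt$ itself... more precisely from $\gPsi(\gX,\gt)\ge\gPsi(\gX,\gt,\gt')$ with a well-chosen $\gt'$, yielding $e^{-\beta\gPsi(\gX,\gt)}\le\exp[-\beta b'(2^j r)^2+\cdots]$. The $\gpi$-mass of shell $j$ is at most $\gpi(\sB^{\overline\gS}(\gt_0,(2^{j+1}+1)r))$, and here is where Definition $\ref{def-eta}$ enters: iterating $(\ref{Eq-eta})$ from radius $\eta_0$ gives $\gpi(\sB^{\overline\gS}(\gt_0,2^m\eta_0))\le\exp[\gamma\eta_0^2(4^{m-1}+\cdots)]\gpi(\sB^{\overline\gS}(\gt_0,\eta_0))\le\exp[\tfrac{\gamma}{3}(2^m\eta_0)^2]\gpi(\sB^{\overline\gS}(\gt_0,\eta_0))$, so the prior mass of the shell grows at most like $\exp[\gamma'(2^j r)^2]$ with $\gamma'=\gamma/3=\beta/24\ll\beta b'$ — provided $r\ge$ a constant multiple of $\eta_0$, which contributes the $c_2\eta^{\overline\gS,\gpi}(\gt_0)$ term. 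Since the quadratic gain $\beta b'(2^j r)^2$ in the exponent beats both the prior-mass loss $\gamma'(2^j r)^2$ and the small slack terms once $r$ exceeds a constant times $\eps^{\overline\gS}(\gs)+\sqrt{\xi+\xi'+\text{const}}$, the shell sum is $\le e^{-\xi'}$ and the $\gpi(\sB^{\overline\gS}(\gt_0,\eta_0))$ factors cancel. Optimizing over $\gt_0\in\overline\gS$ gives the infimum in $(\ref{def-rb})$.

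\textbf{Main obstacle.} The delicate point is bookkeeping the constants so that the quadratic coefficient $\beta b'$ coming from the deterministic Hellinger comparison strictly dominates the sum of (i) the Talagrand "variance" coefficient $6c_0^{-1}$, (ii) the prior-growth coefficient $\gamma/3=\beta/24$ from Definition $\ref{def-eta}$, and (iii) the linear-in-$\sqrt\xi$ cross terms after a $2ab\le\alpha a^2+\alpha^{-1}b^2$ split — all uniformly in the shell index $j$ and with the peeling union bound kept summable. This is exactly where one needs $c_0=1000$ to be large, $\gamma=\beta/8$ (so that $\gamma/3<\gamma<\beta b'$), and the threshold $\beta\ge7.2\times10^{-6}$: for very small $\beta$ the term $c_4=3.5\max\{375;\beta^{-1/2}\}$ must absorb a $\beta^{-1/2}$ blow-up in the stochastic slack, and checking that no other constant degrades is the real content of making the theorem "universal." Everything else is the standard $\rho$-estimation machinery; I would lean on the process estimates already established in Baraud and Birgé~\citeyearpar{Baraud:2016kq} for the Talagrand step rather than redo it.
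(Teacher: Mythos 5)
Your proposal follows the same architecture as the paper's proof: the Talagrand-plus-peeling argument to get a uniform deviation bound on $\gPsi(\gX,\gt,\gt')$ valid on an event of probability $1-e^{-\xi}$ (the paper's Theorem~\ref{main}), the deterministic expectation/variance comparisons~\eref{eq-esp-psi}--\eref{eq-var-psi}, the numerator/denominator bound on the posterior density using a reference point $\gt_0$, and the iterated application of Definition~\ref{def-eta} to control shell-by-shell prior growth (the paper's Proposition~\ref{Prop-int} with $4^{J-1}\ge b/a$ and the $\sum 4^j$ geometric series). Your observation that the prior-growth coefficient $\gamma/3$ must be strictly dominated by the quadratic gain, and that $c_4=3.5\max\{375;\beta^{-1/2}\}$ is exactly what absorbs the $\beta^{-1/2}$ blow-up from the threshold $r\ge (\beta c_8')^{-1/2}$ in Proposition~\ref{Prop-int}, is precisely the bookkeeping the paper carries out.
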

In the favorable situation where the observations $X_{1},\ldots,X_{n}$ are truly i.i.d.\ so that $\gs=(s,\ldots,s)$, \eref{Eq-main} can be reformulated equivalently as 
\[
\pi_{\!\gX(\omega)}\left(\sB^{\overline{S}}(s,r)\right)\ge1-e^{-\xi'}\quad \text{for all }\omega\in \Omega_{\xi},\;\xi'>0\,\text{ and }\,r\ge\overline r_{n}
\]
with 
\begin{equation}\label{eq-rb2}
\overline r_{n}=\inf_{t\in \overline{S}}\left[c_{1}h(s,t)+c_{2}\eta_{n}^{\overline{S},\gpi}(t)\right]+c_{3}\eps_{n}^{\overline{S}}(\gs)+c_{4}\sqrt{\frac{\xi+\xi'+2.61}{n}},
\end{equation}
which measures the concentration of the $\rho$-posterior distribution $\pi_{\!\gX}$ around the true density $s$ of our i.i.d.\ observations $\etc{X}$. It involves three main terms: $h(s,t)$, $\eta_{n}^{\overline{S},\gpi}(t)$ and $\eps_{n}^{\overline{S}}(\gs)$. For many models $\overline S$ of interest, as we shall see in Section~\ref{Sect-UPBeps}, it is possible to show an upper bound of the form
\begin{equation}\label{eq-borneEPS}
\eps_{n}^{\overline{S}}(\gs)\le v_{n}(\overline S)\quad \text{for all $\gs\in\sL^{n}$},
\end{equation}
where $v_{n}(\overline S)$ is of the order of the minimax rate of estimation on $\overline S$ (up to possible logarithmic factors), i.e. the rate one would expect by using a frequentist or a classical Bayes estimator provided that the true density $s$ does belong to the model $\overline S$ and the prior distribution puts enough mass around $s$. Under \eref{eq-borneEPS}, if $s$ does belong to $\overline S$, we deduce from~\eref{eq-rb2} that
\begin{equation}\label{eq-rb2b}
\overline r_{n}\le (c_{2}+ c_{3})\max\ac{\eta_{n}^{\overline{S},\gpi}(s);v_{n}(\overline S)}+c_{4}{\sqrt{\xi+\xi'+2.61\over n}}.
\end{equation}
In many cases  the quantity $\eta_{n}^{\overline{S},\gpi}(s)$ turns out to be of the same order or smaller than $v_{n}(\overline S)$ provided that the prior $\pi$ puts enough mass around $s$. In~\eref{eq-rb2}, the term $\inf_{t\in\overline{S}}\left[c_{1}h(s,t)+c_{2}\eta_{n}^{\overline{S},\gpi}(t)\right]$ expresses some robustness with respect to this ideal situation: if $\pi$ puts too little mass around $s$, possibly zero mass when $s$ does not belong to the model, but if $s$ is close enough to some point $t\in\overline{S}$ around which $\pi$ puts enough mass, the previous situation does not deteriorate too much. When $s$ does not belong to the model, one may think of $t$ as a best approximation point $\overline t$ of $s$ in $\overline{S}$ when $\eta_{n}^{\overline{S},\gpi}(\overline t)$ is not too large or alternatively to some point $t$ that may be slightly further away from $s$ but for which $\eta_{n}^{\overline{S},\gpi}(t)$ is smaller than $\eta_{n}^{\overline{S},\gpi}(\overline t)$ in order to minimize the function $t'\mapsto c_{1}h(s,t')+c_{2}\eta_{n}^{\overline{S},\gpi}(t')$ over $\overline{S}$.

If $X_{1},\ldots,X_{n}$ are not truly i.i.d.\ but are independent and close to being drawn from a common density $s_{0}\in\overline S$, i.e.\ $\gs=(s_{1},\ldots,s_{n})$ with $h(s_{i},s_{0})\le \eps$  for some small $\eps>0$ and all $i\in \{1,\ldots,n\}$, then $h(\gs,s_{0})\le\varepsilon$ and $\sB^{\overline{S}}(\gs, r)\subset\sB^{\overline{S}}(s_{0}, \varepsilon+r)$. We therefore deduce from \eref{Eq-main} and \eref{def-rb} with $t=s_{0}$
that, if \eref{eq-borneEPS} holds, the posterior distribution concentrates on Hellinger balls around $s_{0}$ with radius not larger than
\[
\eps+\overline r_{n}\le (1+c_{1})\eps+(c_{2}+ c_{3})\max\ac{{\eta_{n}^{\overline{S},\gpi}(s_{0})};v_{n}(\overline S)}+c_{4}{\sqrt{\xi+\xi'+2.61\over n}},
\]
which is similar to \eref{eq-rb2b} with $s=s_{0}$ except for the additional term $(1+c_{1})\eps$ which expresses the fact that our procedure is robust with respect to a possible departure from the assumption of equidistribution.

\section{Upper bounds for $\eps_{n}^{\overline{S}}(\gs)$}\label{Sect-UPBeps}

\subsection{Case of a finite set $\overline{S}$}
There are many situations for which it is natural, in view of the robustness properties of the
$\rho$-Bayes posterior, to choose for $\overline{S}$ a finite set, in which
case we take $S=\overline{S}$ and the quantity $\eps_{n}^{\overline{S}}(\gs)$ can then be bounded from above as follows.
%
\begin{prop}\label{prop-fc}
If  $\overline{S}$ is a finite set and $S=\overline{S}$,
\[
\eps_{n}^{\overline{S}}(\gs)<\left(\sqrt{c_{0}/3}\right)\min\left\{\sqrt{\sqrt{2}c_{0}n^{-1}\log\left(2|\overline{S}|^{2}\right)},1\right\}.
\]
\end{prop}
An important example of such a finite set $\overline S$ is that of an $\eps$-net for a totally bounded set. We recall that, if $\widetilde{S}$ is a subset of some pseudo-metric space $M$ endowed with a pseudo-distance $d$ and $\eps>0$, a subset $S_{\varepsilon}$ of $M$ is an $\eps$-net for $\widetilde S$ if, for all $t\in\widetilde S$, one can find $t'\in S_{\varepsilon}$ such that $d(t,t')\le \eps$. When $\widetilde{S}$ is totally bounded one can find a finite $\eps$-net for $\widetilde S$ whatever $\varepsilon>0$. This applies in particular to totally bounded subsets $\widetilde{S}$ of $(\sL,h)$.
The smallest possible size of such nets depends on the metric properties of $(\widetilde S,h)$ and the following notion of metric dimension, as introduced in Birg\'e~\citeyearpar{MR2219712} (Definition~6 p.~293) turns out to be a central tool. 
\begin{df}\label{def-MD}
Let $D$ be a function from $(0,1]$ to $[3/4,+\infty)$ which is right-continuous. A model $\widetilde S\subset\sL$ admits a metric dimension bounded by $D$ if, for all $\eps\in(0,1]$, there exists an $\eps$-net $S_{\varepsilon}$ for $\widetilde S$ such that, for any $s$ in $\sL$,
\begin{equation}\label{def-mddim1}
\ab{\ac{t\in S_{\varepsilon},\ h(s,t)\le r}}\le\exp\cro{D(\eps)(r/\varepsilon)^{2}}\quad\mbox{for all }r\ge 2\eps.
\end{equation}
%
\end{df}
Note that this implies that $S_{\varepsilon}$ is finite and that one can always take $D(1)=3/4$ since $h$ is bounded by 1.
The following result shows how a bound $D$ for the metric dimension can be used to bound $\eps_{n}^{\overline{S}}(\gs)$ for a model $\overline{S}$ which is an $\varepsilon$-net for $\widetilde{S}$ which satisfies (\ref{def-mddim1}).
%
\begin{prop}\label{maj-epss}
Let $\widetilde S$ be a totally bounded subset of $(\sL,h)$ with metric dimension bounded by $D$ and let $\eps$ be a positive number satisfying
\begin{equation}\label{eq-eps0}
\eps\ge1/\left(2\sqrt{n}\right)\qquad\mbox{and}\qquad D(\eps)\le n(\eps/c_{0})^{2}.
\end{equation}
If $S_{\varepsilon}$ is an $\eps$-net for $\widetilde S$ satisfying~\eref{def-mddim1} and $S=\overline{S}=S_{\varepsilon}$, then $\eps_{n}^{\overline{S}}(\gs)\le 2\eps$ whatever $\gs\in\sL^{n}$.
\end{prop}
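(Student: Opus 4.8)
\textbf{Proof plan for Proposition~\ref{maj-epss}.}
The plan is to control $\gw^{\overline{\gS}}(\gs,y)$ for $y\ge 2\eps$ and show that under~\eref{eq-eps0} one has $\gw^{\overline{\gS}}(\gs,y)\le 6c_{0}^{-1}y^{2}$ for all such $y$, which by the definition~\eref{def-en} of $\eps^{\overline{\gS}}(\gs)$ forces $\eps^{\overline{\gS}}(\gs)\le 2\eps$ (for $y<2\eps$ there is nothing to prove since then $y<2\eps\le\eps^{\overline\gS}(\gs)$ would already contradict the target, or one checks directly the sup defining $\eps^{\overline\gS}(\gs)$ is $\le 2\eps$). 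Fix $\gs\in\sbL$ and $y\ge 2\eps$. Since $\overline{\gS}=\gS$ is an $\eps$-net for $\widetilde\gS$ satisfying~\eref{def-md}, the set $\sB^{\gS}(\gs,y)=\{\gt\in\overline{\gS}:\gh(\gs,\gt)\le y\}$ has cardinality at most $\exp[\gD(\eps)(y/\eps)^{2}]$. The key step is then to bound the expected supremum of the centered process $\bsZ(\gX,\gt,\gt')=\gPsi(\gX,\gt,\gt')-\E_{\gs}[\gPsi(\gX,\gt,\gt')]$ over a \emph{finite} index set, using that $\gPsi$ is a sum of $n$ independent bounded increments (each $\psi$-term lies in $[-1,1]$, and the increment in $\gt,\gt'$ between two points is controlled by their Hellinger distances).

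The main technical tool I would invoke is the chaining/maximal-inequality machinery already developed for this very process in Baraud and Birg\'e~\citeyearpar{Baraud:2016kq}: for $\gt,\gt'$ ranging over a finite set, a Bernstein-type bound on the increments of $\bsZ$ together with a union bound gives
\[
\gw^{\overline{\gS}}(\gs,y)=\E_{\gs}\Bigl[\sup_{\gt,\gt'\in\sB^{\gS}(\gs,y)}\ab{\bsZ(\gX,\gt,\gt')}\Bigr]\le C\Bigl(\sqrt{v\,\log N}+b\log N\Bigr),
\]
where $N=\ab{\sB^{\gS}(\gs,y)}\le\exp[\gD(\eps)(y/\eps)^{2}]$, $b$ is an almost sure bound on a single increment of $\bsZ$ (a universal constant, since $\psi$ is bounded by $1$ and $2$-Lipschitz), and $v$ is a variance proxy of the form $v\lesssim \gh^{2}(\gs,\gt)+\gh^{2}(\gs,\gt')\le 2y^{2}$ on $\sB^{\gS}(\gs,y)$ — this last bound being exactly the point where one uses that the diameter of the ball in the $\gh$-pseudo-metric is $2y$ and that the variance of each $\psi$-increment is comparable to the corresponding squared Hellinger distance. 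Plugging in $\log N\le \gD(\eps)(y/\eps)^{2}$ and using the hypothesis $\gD(\eps)\le(\eps/c_{0})^{2}$ gives $\log N\le y^{2}/c_{0}^{2}$, so that $\sqrt{v\log N}\lesssim y^{2}/c_{0}$ and $b\log N\lesssim y^{2}/c_{0}^{2}$; since $y\ge 2\eps\ge 1$ by~\eref{eq-eps0}, the linear-in-$\log N$ term is also $\lesssim y^{2}$ with a $c_{0}^{-2}$ gain, and summing the two contributions yields $\gw^{\overline{\gS}}(\gs,y)\le 6c_{0}^{-1}y^{2}$ once the absolute constants are tracked against $c_{0}=1000$.

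I expect the main obstacle to be bookkeeping rather than conceptual: one must extract from Baraud and Birg\'e~\citeyearpar{Baraud:2016kq} the precise maximal inequality for the $\gPsi$-process (in the version with a \emph{finite} index set, where no continuous chaining is needed), verify that its variance term is genuinely $O(\gh^{2})$ on the ball and not merely $O(n)$, and then propagate the numerical constants carefully so that the final bound comes out below $6c_{0}^{-1}y^{2}$ with $c_{0}=1000$ — the value $c_{0}=10^{3}$ in~\eref{eq-constantes} and the factor $6$ are evidently calibrated for exactly this estimate. A secondary point to handle cleanly is the passage from ``$\gw^{\overline{\gS}}(\gs,y)\le 6c_{0}^{-1}y^{2}$ for all $y\ge 2\eps$'' to ``$\eps^{\overline{\gS}}(\gs)\le 2\eps$'': by~\eref{def-en}, $\eps^{\overline{\gS}}(\gs)$ is the supremum of those $y\ge 1$ with $\gw^{\overline{\gS}}(\gs,y)>6c_{0}^{-1}y^{2}$; since every such $y$ must satisfy $y<2\eps$, we get $\eps^{\overline{\gS}}(\gs)\le 2\eps$ directly (the supremum of the empty set being $1\le 2\eps$ otherwise). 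One should also note for completeness that $\sB^{\gS}(\gs,y)$ could be empty for small $y$, in which case the convention $\sup\varnothing=1$ makes $\gw^{\overline{\gS}}(\gs,y)=1\le 6c_{0}^{-1}y^{2}$ as soon as $y^{2}\ge c_{0}/6$, which is compatible with $y\ge 2\eps\ge 1$ only when $\eps$ is not too small, so this degenerate regime must be absorbed into the estimate above or dismissed by noting it does not affect the supremum defining $\eps^{\overline{\gS}}(\gs)$.
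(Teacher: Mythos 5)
Your proposal is correct and follows essentially the same route as the paper: bound $N=\left|\sB^{\gS}(\gs,y)\right|$ via~\eref{def-md}, invoke the finite-index maximal inequality for the $\gPsi$-process (Proposition~50 of BBS combined with the variance bound~\eref{eq-var-psi}, already packaged as~\eref{eq-b1ws} in the proof of Proposition~\ref{prop-fc}) to get $\gw^{\overline{\gS}}(\gs,y)\le H_{y}+2a_{2}y\sqrt{H_{y}}$ with $H_{y}=\log_{+}\!\left(2N^{2}\right)$, and then use $\gD(\eps)\le(\eps/c_{0})^{2}$ and $y\ge 2\eps$ to obtain $\gw^{\overline{\gS}}(\gs,y)\le 6c_{0}^{-1}y^{2}$, from which $\eps^{\overline{\gS}}(\gs)\le 2\eps$ follows by~\eref{def-en}. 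The only point your sketch leaves implicit is that the additive $\log 2$ in $H_{y}$ is absorbed by the lower bound $\gD(\eps)\ge 1/2$ before the final constant check, and the degenerate empty-ball case you worry about is handled automatically by the $\log_{+}$.
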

Starting from a classical statistical model $\widetilde S$ with metric dimension bounded by $D$ we may therefore replace it by a suitable $\varepsilon$-net $\overline{S}$ in order to build a $\rho$-Bayes posterior based on some prior distribution on $\overline{S}$. The robustness of the procedure, as shown by Theorem~\ref{thm-main}, implies that the replacement of $\widetilde S$ by $\overline{S}$ will only entail an additional bias term of order $\varepsilon$.w

%
\subsection{Weak VC-major classes}
%
\begin{df}\label{def-wvc}
A class of real-valued functions $\sF$ on a set $\sX$ is said to be weak VC-major  with dimension not larger than $d\in\N$ if, for all $u\in\R$, the class of sets 
\[
\sC_{u}(\sF)=\left\{\{f>u\},\ f\in\sF\st\right\}
\]
is VC on $\sX$, with VC-dimension not larger than $d$. The weak VC-major dimension of $\sF$ is the smallest such integer $d$. 
\end{df}
For details on the definition and properties of VC-classes, we refer to van der Vaart and Wellner~\citeyearpar{MR1385671} and for weak VC-major classes to Baraud~\citeyearpar{Bar2016}. One major point about weak VC-major classes is the fact that if $\sF$ is weak VC-major with dimension not larger than $d\in\N$, the same holds for any subset $\sF'$ of $\sF$.
%
\begin{prop}\label{prop-VC}
Let $\sF$ be the class of  functions on $\sX$ given by
\begin{equation}
\sF=\left\{\psi\left(\sqrt{t'\over t}\right),\ (t,t')\in \overline{S}^{2}\right\}.
\label{def-Fd}
\end{equation}
If it is weak VC-major with dimension not larger than $d\ge 1$, then, whatever the density $\gs\in\sL^{n}$,
\begin{equation}
\eps_{n}^{\overline{S}}(\gs)\le\frac{11c_{0}}{4}\sqrt{\frac{\overline{c}_{n}(d\wedge n)}{n}}\left[\log\pa{en\over d\wedge n}\right]^{3/2}\;\mbox{ with }\;\overline{c}_{n}=1+\frac{\log 2}{\log(en)}.
\label{cas-VC}
\end{equation}
\end{prop}
%

\subsection{Examples}\label{sect-exVC}
We provide below examples of parametric models indexed by some subset $\bs{\Theta}$ of a Euclidean space putting on our models the $\sigma$-algebra induced by the Borel one on $\bs{\Theta}$. Since our results are in terms of VC-dimensions, they hold for all submodels of those described below.

%
\begin{prop}\label{prop-exp0}
Let $(g_{j})_{1\le j\le J}$ with $J\ge 1$ be real-valued functions on a set $\sX$. 

a) If the elements $t$ of the model $\overline{S}$ are of the form
\begin{equation}\label{eq-modelt0}
t(x)=\exp\pa{\theta_{0}+\sum_{j=1}^{J}\theta_{j}g_{j}(x)}\quad\text{for all }x\in\sX
\end{equation}
with $\theta_{0},\ldots,\theta_{J}\in\R$, then $\sF$ defined by~\eref{def-Fd} is weak VC-major with dimension not larger than $d=J+2$. 

b) Let $\sJ=(I_{i})_{i=1,\ldots,k}$ ($k\ge2$) be a partition of $\sX$. If the elements $t$ of the model $\overline{S}$ are of the form
\begin{equation}\label{eq-modelt0m}
t(x)=\sum_{i=1}^{k}\exp\pa{\sum_{j=1}^{J}\theta_{i,j}g_{j}(x)}\1_{I_{i}}(x)\quad\text{for all }x\in\sX
\end{equation}
with $\theta_{i,j}\in\R$ 
for $i=1,\ldots,k$ and $j=1,\dots J$, then $\sF$ defined by~\eref{def-Fd} is weak VC-major with dimension not larger than $d=k(J+2)$.
\end{prop}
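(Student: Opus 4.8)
The plan is to reduce both assertions to a single combinatorial bound on sign changes of linear combinations, using the characterization of weak VC-major classes via the level sets $\{f>u\}$. Fix $u\in\R$ and a function $f=\psi\bigl(\sqrt{t'/t}\bigr)\in\sF$, with $t,t'$ of the form~\eref{eq-modelt0}. Since $\psi$ is a strictly increasing bijection from $[0,+\infty]$ onto $[-1,1]$, the set $\{f>u\}$ is, for $u\in[-1,1)$, exactly $\{x\in\sX:\; t'(x)/t(x)>\psi^{-1}(u)^{2}\}=\{x:\; t'(x)>c\,t(x)\}$ for the constant $c=\psi^{-1}(u)^{2}\ge0$ (and it is $\sX$ or $\varnothing$ for $u$ outside $[-1,1)$, which contributes nothing to the VC dimension). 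With $t=\exp\bigl(\sum_{j}\theta_{j}g_{j}\bigr)$ and $t'=\exp\bigl(\sum_{j}\theta'_{j}g_{j}\bigr)$, the inequality $t'(x)>c\,t(x)$ with $c>0$ is equivalent to $\sum_{j=1}^{J}(\theta'_{j}-\theta_{j})g_{j}(x)-\log c>0$, i.e.\ to $\{h>0\}$ where $h$ ranges over the $(J+1)$-dimensional linear span of $g_{1},\dots,g_{J}$ and the constant function $1$ (the case $c=0$ gives $\sX$ again). So $\sC_{u}(\sF)$ is contained in the class of positivity sets of a $(J+1)$-dimensional vector space of functions, which is classically VC with dimension at most $J+1$ (see van der Vaart and Wellner~\citeyearpar{MR1385671}); hence $\sF$ is weak VC-major with dimension at most $d=J+1$.

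For the piecewise model~\eref{eq-modelt0m}, I would argue cellwise. Again fix $u$ and reduce $\{f>u\}$ to a set of the form $\{x:\; t'(x)>c\,t(x)\}$ with $c\ge0$. On each cell $I_{i}$ of the partition $\sJ$, $t$ and $t'$ are exponentials of linear combinations of $g_{1},\dots,g_{J}$ with their own coefficient vectors, so the restriction of $\{t'>c\,t\}$ to $I_{i}$ is the intersection with $I_{i}$ of a set $\{h_{i}>0\}$ with $h_{i}$ in the $(J+1)$-dimensional span of $g_{1},\dots,g_{J},1$. Thus every element of $\sC_{u}(\sF)$ can be written as $\bigcup_{i=1}^{k}\bigl(I_{i}\cap\{h_{i}>0\}\bigr)$. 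A standard stability result for VC classes (the class of such unions over a fixed partition into $k$ pieces, each piece cut by a VC class of dimension $v$, has VC dimension $O(kv)$) gives the bound; tracking constants as in the cited references yields dimension at most $k(J+2)$. The extra ``$+2$'' per cell (rather than ``$+1$'') absorbs the bookkeeping for the indicator of the cell together with the affine term $\log c$; I would cite the precise shattering-number estimate from van der Vaart and Wellner~\citeyearpar{MR1385671} or from Baraud~\citeyearpar{Bar2016} to pin down this constant.

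The main obstacle is not conceptual but the careful handling of the reduction $\{f>u\}\rightsquigarrow\{t'>c\,t\}$ including the degenerate values $u\notin[-1,1)$, the point $x$ where $t(x)=0$ (handled by the convention $a/0=+\infty$, so $\psi=1$ there and such $x$ always lies in $\{f>u\}$ when $u<1$), and, in the piecewise case, obtaining the sharp constant in the VC dimension of a $k$-fold partitioned union rather than a merely-$O(kJ)$ bound. The linear-algebra core—that positivity sets of a $D$-dimensional function space form a VC class of dimension $D$—is entirely standard and I would invoke it as a black box.
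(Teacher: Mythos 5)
Your proposal is correct, and it takes a genuinely different (more self-contained) route than the paper. The paper's proof is a short citation chain: it observes that $\sqrt{t'/t}$ is again of the form~\eref{eq-modelt0}, invokes Proposition~3 of Baraud~\citeyearpar{Bar2016} to reduce (via monotonicity of $\psi$) to showing that the class of functions~\eref{eq-modelt0} itself is weak VC-major, then cites Proposition~10-$i)$ of Baraud and Birg\'e~\citeyearpar{Baraud:2016kq} for VC-ness of that class and Proposition~1 of Baraud~\citeyearpar{Bar2016} to convert VC to weak VC-major; the piecewise case simply replaces 10-$i)$ by 10-$ii)$. You instead unpack the level sets $\{f>u\}$ explicitly, using the inverse of $\psi$ and positivity of $t$ to reduce $\{f>u\}$ to a positivity set $\{h>0\}$ with $h$ in the $(J+1)$-dimensional span of $g_{1},\dots,g_{J},1$, and then invoke the classical Dudley/Steele bound on positivity sets of a $D$-dimensional function space. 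For the piecewise model you decompose cellwise and use the fact that if a finite set $S$ is shattered by $\bigl\{\bigcup_{i}(I_{i}\cap C_{i})\bigr\}$, then each trace $S\cap I_{i}$ is shattered by the corresponding cell class, so $|S|\le\sum_{i}d_{i}$. Both routes are sound; the paper's is more modular (it reuses propositions also needed elsewhere), yours is more elementary and makes the combinatorial content visible.

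One small remark on the piecewise case: your cellwise argument as you set it up actually gives $k(J+1)$, which is stronger than the stated $k(J+2)$. Your parenthetical justification of the extra ``$+2$'' is therefore superfluous and somewhat misleading — the constant $\log c$ is already absorbed by including $1$ in the $(J+1)$-dimensional span, and the indicator of the cell does not contribute an extra dimension since the trace on $I_i$ of a VC class of dimension $v$ still has VC dimension at most $v$. You should either state the cleaner bound $k(J+1)$ or simply note that your bound is a fortiori $\le k(J+2)$, rather than reverse-engineering a justification for the paper's looser constant, which presumably comes from the particular formulation of Proposition~10-$ii)$ in Baraud and Birg\'e~\citeyearpar{Baraud:2016kq}.
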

If $\sX$ is an interval of $\R$ (possibly $\R$ itself), the second part of the proposition extends to densities based on variable partitions of $\sX$.
%
\begin{prop}\label{prop-exp}
Let $(g_{j})_{1\le j\le J}$ ($J\ge1$) be real-valued functions on an interval $\gI$ of $\R$. Let the elements $t$ of the model $\overline{S}$ be of the form 
\begin{equation}
t(x)=\sum_{I\in\sJ(t)}\exp\pa{\sum_{j=1}^{J}\theta_{I,j}g_{j}(x)}\1_{I}(x)\quad\text{for all }x\in\sX,
\label{eq-modelt}
\end{equation}
where $\sJ(t)$ is a partition of\, $\gI$ which may depend on $t$, into at most $k$ intervals ($k\ge2$), and $(\theta_{I,j})_{j=1,\ldots,J}\in\R^{J}$ for all $I\in\sJ(t)$. Then $\sF$ defined by~\eref{def-Fd} is weak VC-major with dimension not larger than $d=\lceil18.8k(J+2)\rceil$, which means the smallest integer $j\ge18.8k(J+2)$.
\end{prop}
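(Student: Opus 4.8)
The plan is to reduce the weak VC-major property of $\sF$ to a bound on the VC dimension of a family of superlevel sets, and then exploit the piecewise log-linear form of the densities through a Sauer--Shelah counting argument. First I would note that $\psi$ is increasing from $\psi(0)=-1$ to $\psi(+\infty)=1$, and that every element of $\overline{S}$ is an exponential, hence strictly positive on $\gI$, so $\sqrt{t'/t}$ is a genuine positive function and no convention is needed. Consequently, for every $u\in\R$ the set $\{x\in\gI:\psi(\sqrt{t'(x)/t(x)})>u\}$ is empty (if $u\ge1$), equal to $\gI$ (if $u\le-1$), or equal to $\{x\in\gI:t'(x)>c_u\,t(x)\}$ with $c_u=((1+u)/(1-u))^{2}>0$ (if $-1<u<1$), where $c_u$ depends on $u$ only and runs through $(0,+\infty)$. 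Hence it suffices to bound, uniformly in $c>0$, the VC dimension of the class $\sC_c=\{\{x\in\gI:t'(x)>c\,t(x)\}:(t,t')\in\overline{S}^{2}\}$.

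For the structural step, I would fix $c>0$ and $(t,t')\in\overline{S}^{2}$ and introduce the common refinement $\sK$ of the partitions $\sJ(t)$ and $\sJ(t')$; since each has at most $k-1$ breakpoints, $\sK$ is a partition of $\gI$ into at most $p:=2k-1$ intervals. On a piece $K\in\sK$ one has $t=\exp(P_K)$ and $t'=\exp(Q_K)$ with $P_K,Q_K$ in the space $V=\mathrm{span}(g_1,\dots,g_J)$ of dimension $\le J$, so $\{t'>ct\}\cap K=K\cap\{(Q_K-P_K)>\log c\}$ is the trace on $K$ of a member of $\sH:=\{\{f>\log c\}:f\in V\}$. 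Thus every element of $\sC_c$ has the form $\bigcup_{K\in\sK}(K\cap H_K)$ with $\sK$ a partition of $\gI$ into at most $p$ intervals and $H_K\in\sH$. By the classical bound on the VC dimension of the positivity sets of a finite-dimensional function space, applied to the affine space $V-\log c$ (whose linear span has dimension $\le J+1$), the class $\sH$ is VC with dimension at most $v:=J+1$.

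The counting step then bounds the VC dimension of the class $\sC^{\star}$ of all sets $\bigcup_{K\in\sK}(K\cap H_K)$ with $\sK$ a partition of $\gI$ into at most $p$ intervals and $H_K\in\sH$, which contains $\sC_c$. Given $N$ points $x_1<\dots<x_N$ of $\gI$, such a set cuts them into at most $p$ consecutive blocks and, inside a block of size $n_m$, selects a trace of $\sH$; so the number of traces on $\{x_1,\dots,x_N\}$ is at most $\big(\sum_{i=0}^{p-1}\binom{N-1}{i}\big)\cdot\max\prod_m\big(\sum_{i=0}^{v}\binom{n_m}{i}\big)$, the maximum being over partitions of $N$ into at most $p$ positive parts. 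Bounding $\sum_{i\le p-1}\binom{N-1}{i}$ by $(eN/(p-1))^{p-1}$ (for $k\ge2$; the case $k=1$ follows from Proposition~\ref{prop-exp0}), bounding $\sum_{i\le v}\binom{n}{i}$ by $(en/v)^{v}\vee e^{v}$, applying the arithmetic--geometric inequality to the product over blocks (which is largest when the blocks are large and of equal size), substituting $p=2k-1$ and $v=J+1$, and collecting terms, I would check that this count is $<2^{N}$ as soon as $N>\kappa\,k(J+2)$ for a numerical constant $\kappa$, and that a careful accounting of the constants gives $\kappa\le 9.4$. This shows that no set of $\lceil9.4\,k(J+2)\rceil+1$ points is shattered, so the VC dimension of $\sC_c$ is at most $\lceil9.4\,k(J+2)\rceil$ uniformly in $c$, which is the claim.

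The reduction and the VC bound for $\sH$ are routine; the delicate part is the quantitative optimisation in the counting step. One must combine the two Sauer-type estimates without losing logarithmic factors — in particular treating the ``small'' blocks $n_m\le J$ efficiently and balancing the number of blocks spent on cutting $\gI$ against those spent inside the pieces — in order to reach the stated constant $9.4$ rather than a looser numerical value.
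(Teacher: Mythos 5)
Your reduction and structural steps are exactly those of the paper: you replace $\psi(\sqrt{t'/t})>u$ by the level set $\{t'>c_u t\}$ using the monotonicity of $\psi$ (this is what the cited Proposition~3 of Baraud (2016) formalizes), and you pass to the common refinement of $\sJ(t)$ and $\sJ(t')$ so that the level set becomes a union of at most $2k-1$ traces of the positivity sets of a $(J+1)$-dimensional linear space. At that point the paper simply cites Proposition~13 of Baraud and Birg\'e (2016) applied with $K=2k$; what you have sketched in your ``counting step'' is precisely a re-derivation of that cited proposition via a Sauer--Shelah bound on the partition pattern times a Sauer--Shelah bound within each block. So the route is not a genuinely different one --- it is the same route with the black box opened.

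The one place where your proposal is incomplete is exactly the part you flag yourself: the assertion that ``a careful accounting of the constants gives $\kappa\le 9.4$''. The structure of the bound
\[
\Bigl(\sum_{i=0}^{p-1}\tbinom{N-1}{i}\Bigr)\cdot \max_{\,n_1+\cdots+n_m=N,\ m\le p}\ \prod_{m}\Bigl(\sum_{i=0}^{v}\tbinom{n_m}{i}\Bigr)<2^{N}
\]
with $p=2k-1$ and $v=J+1$ is correct and will yield $N\lesssim k(J+2)$, but the specific numerical constant $9.4$ is not obvious from a rough optimisation (the ratio $N/(p\,v)$ sits near $4$--$5$, so one is near the regime where the Sauer exponents are not cheap), and the way you treat blocks with $n_m\le v$ matters for the constant. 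Since the whole point of the proposition is the explicit value of $d$, you would have to either carry out that optimisation completely or, as the paper does, invoke Proposition~13 of Baraud--Birg\'e (2016), where that arithmetic has already been done. As a minor remark, your count of $p=2k-1$ pieces in the refinement is slightly tighter than the paper's $2k$, but this is inconsequential because the cited proposition is stated for $K=2k$.
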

If, for instance, $\overline{S}$ consists of all positive histograms defined on a bounded interval $\gI$ of $\R$ with at most $k$ pieces, then one may take $J=1$, $g_{1}\equiv 1$ and Proposition~\ref{prop-exp} implies that $\sF$ is weak VC-major with dimension not larger than $56.4k$. 

Note that the densities $t$ given by (\ref{eq-modelt0m}) can be viewed as elements of a piecewise exponential family. Let us indeed consider a classical exponential family on the set  $\sX$ with densities (with respect to $\mu$) of the form
\begin{equation}
t_{\bs{\theta}}(x)=\exp\cro{\sum_{j=1}^{J}\theta_{j}T_{j}(x)-A(\bs{\theta})}\quad\text{for all }x\in\sX
\label{eq-expt}
\end{equation}
with $\bs{\theta}=(\theta_{1},\cdots,\theta_{J})\in\bs{\Theta}\subset\R^{J}$. It leads  to a model $\overline S$ of the form (\ref{eq-modelt0}) with $g_{j}=T_{j}$ for $1\le j\le J$ and $\theta_{0}=-A(\bs{\theta})$. In particular, $\sF$ is weak VC-major with dimension not larger than $d=J+2$ and we deduce from Proposition~\ref{prop-exp0} that
\begin{equation}\label{eps-EM}
\eps_{n}^{\overline S}(\gs)\le(11/4)c_{0}\sqrt{\frac{\overline{c}_{n}(J+2)}{n}}\log^{3/2}\pa{en}\qquad \mbox{for all}\ \gs\in \sL^{n}.
\end{equation}
If all elements of $\overline{S}$ are piecewise of the form (\ref{eq-expt}) on some partition 
$\sJ=(I_{i})_{i=1,\ldots,k}$ of $\sX$ into $k$ subsets, $\sF$ is then weak VC-major with dimension not larger than $k(J+3)$ and for some positive universal constant $c'$, 
\begin{equation}\label{eps-EMpm}
\eps_{n}^{\overline S}(\gs)\le c'\sqrt{\frac{kJ}{n}}\log^{3/2}\pa{en}\qquad \mbox{for all}\ \gs\in \sL^{n}.
\end{equation}
When $\sX=[0,1]$, one illustration of case $b)$ is provided by $\Theta_{i}=[-M,M]^{J}$ for $i\in\{1,\ldots,k\}$ and $T_{j}(x)=x^{j-1}$ for $j\in\{1,\ldots,J\}$. We may then apply Proposition~\ref{prop-exp0} and the performance of the $\rho$-posterior distribution will depend on the approximation properties of the family of piecewise polynomials on the partition $\sJ$ with respect to the logarithm of the true density. Numerous results about such approximations can be found in DeVore and Lorentz~\citeyearpar{DeVore}.

\section{Upper bounds for $\eta_{n}^{\overline{S},\pi}(t)$}\label{Sect-UPBeta}

\subsection{Uniform distribution on an $\varepsilon$-net}\label{sect-eta1}
We consider here the situation where $\widetilde{S}$ is a totally bounded subset of $(\sL,h)$ with metric dimension bounded by $D$, $\eps\in (0,1]$, $\overline{S}=S_{\varepsilon}$ is an $\eps$-net for $\widetilde{S}$ which satisfies~\eref{def-mddim1} and we choose $\pi$ as the uniform distribution on $\overline{S}$. 
\begin{prop}\label{def-etaRes}
If $D(\eps)\le (\gamma/4)n\eps^{2}$, then $\eta_{n}^{\overline{S},\pi}(t)\le \eps$ for all $t\in\overline{S}$.
\end{prop}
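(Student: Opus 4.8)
The plan is to verify the sufficient condition \eref{Eq-eta} directly: I want to show that, whenever $\eps$ satisfies $\eps\ge 1/2$ and $\gD(\eps)\le(\gamma/4)\eps^{2}$, one has
\[
\gpi\!\left(\sB^{\overline{\gS}}(\gt,2r')\right)\le\exp\!\left[\gamma r'^{2}\right]\gpi\!\left(\sB^{\overline{\gS}}(\gt,r')\right)\quad\mbox{for all }r'\ge\eps,
\]
which by Definition \ref{def-eta} immediately gives $\eta^{\overline{\gS},\gpi}(\gt)\le\eps$. Since $\gpi$ is the uniform distribution on the finite set $\overline{\gS}$, for any $\gt\in\sbL$ and $\rho>0$ the quantity $\gpi(\sB^{\overline{\gS}}(\gt,\rho))$ equals $\ab{\ac{\gt''\in\overline{\gS},\ \gh(\gt,\gt'')\le\rho}}/\ab{\overline{\gS}}$. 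So the desired inequality reduces to a purely combinatorial counting statement:
\[
\ab{\ac{\gt''\in\overline{\gS},\ \gh(\gt,\gt'')\le 2r'}}\le\exp\!\left[\gamma r'^{2}\right]\cdot
\ab{\ac{\gt''\in\overline{\gS},\ \gh(\gt,\gt'')\le r'}}\qquad\mbox{for all }r'\ge\eps.
\]
Note the denominators $\ab{\overline{\gS}}$ cancel, which is the key simplification coming from the uniform prior.

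To bound the numerator I would use the hypothesis that $\overline{\gS}$ is an $\eps$-net for $\widetilde{\gS}$ satisfying \eref{def-md}: since $\overline{\gS}\subset\widetilde{\gS}$ (or at least inherits the net counting bound — here I invoke that $\overline{\gS}$ is itself the net $\gS_{\eps}$ appearing in \eref{def-md}), for any $\gt\in\sbL$ and any $r\ge 2\eps$ we have $\ab{\ac{\gt''\in\overline{\gS},\ \gh(\gt,\gt'')\le r}}\le\exp[\gD(\eps)(r/\eps)^{2}]$. Applying this with $r=2r'$ (legitimate since $r'\ge\eps$ forces $2r'\ge 2\eps$) bounds the numerator by $\exp[\gD(\eps)(2r'/\eps)^{2}]=\exp[4\gD(\eps)r'^{2}/\eps^{2}]$. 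For the denominator I use the trivial lower bound $\ab{\ac{\gt''\in\overline{\gS},\ \gh(\gt,\gt'')\le r'}}\ge 1$, valid because $\gt\in\overline{\gS}$ itself lies in the ball of radius $r'$ around $\gt$ (here $r'>0$). Combining, the ratio is at most $\exp[4\gD(\eps)r'^{2}/\eps^{2}]$, and the hypothesis $\gD(\eps)\le(\gamma/4)\eps^{2}$ gives $4\gD(\eps)/\eps^{2}\le\gamma$, hence the ratio is at most $\exp[\gamma r'^{2}]$, as required.

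The one point that needs a little care — and which I'd flag as the main (minor) obstacle — is making sure the counting bound \eref{def-md} is applied to $\overline{\gS}$ rather than to the full set $\widetilde{\gS}$, and that the centre $\gt$ is an arbitrary element of $\overline{\gS}$ (not of $\widetilde{\gS}$): one must check that $\gt\in\overline{\gS}\subset\sbL$ is an admissible choice of centre in \eref{def-md}, which it is since that inequality holds "for any density $\gs$ in $\sbL$". There is also the mild bookkeeping that $\eref{def-md}$ only provides the estimate for radii $r\ge 2\eps$; the restriction $r'\ge\eps$ in the definition of $\eta^{\overline{\gS},\gpi}$ is precisely what ensures $2r'\ge 2\eps$, so no gap arises. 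Everything else is the elementary chain of inequalities above.
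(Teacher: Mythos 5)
Your proof is correct and follows essentially the same route as the paper's: lower-bound the denominator by the mass of a single atom under the uniform prior (equivalently, the count $\ge 1$), upper-bound the numerator via the cardinality bound \eref{def-md} applied with the arbitrary density $\gt$ as centre and radius $2r'\ge 2\eps$, and then invoke $4\gD(\eps)\eps^{-2}\le\gamma$. Your remarks about the admissibility of the centre and the radius restriction are the same bookkeeping the paper implicitly relies on, so there is no gap.
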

\begin{proof}
Let $t\in\overline{S}$. For all $r>0$, $\gpi\left(\sB^{\overline{S}}(t,r)\right)\ge \gpi(\{t\})=\left|\overline{S}\right|^{-1}$. Using~\eref{def-mddim1} we derive that
\[
{\gpi\left(\sB^{\overline{S}}(t,2 r)\right)\over \gpi\left(\sB^{\overline{S}}(t,r)\right)}\le \left|\overline{S}\right|\gpi\left(\sB^{\overline{S}}(t,2 r)\right)=\left|\sB^{\overline{S}}(t,2 r)\right|\le \exp\cro{4D(\eps)\left({r\over \eps}\right)^{2}}
\]  
for all $r\ge \eps$. The conclusion follows from the fact that $4D(\eps)/\eps^{2}\le \gamma n$.
\end{proof}

\subsection{Parametric models indexed by a bounded subset of $\R^{d}$}\label{sect-eta2}
In this section we consider the situation where $\overline{S}$ is a parametric model $\{t_{\gtheta},\ \gtheta\in \bs{\Theta}\}$ indexed by a measurable (with respect to the Borel $\sigma$-algebra) bounded subset $\bs{\Theta}\subset \R^{d}$ and we assume that the prior $\gpi$ is the image by the mapping $\gtheta\mapsto t_{\gtheta}$ of some probability $\nu$ on $\bs{\Theta}$. Besides, we assume that the Hellinger distance on $\overline{S}$ is related on $\bs{\Theta}$ to some norm $\ab{\cdot}_{*}$ on $\R^{d}$ in the following way:
\begin{equation}
\underline a\ab{\gtheta-\gtheta'}_{*}^{\alpha}\le h(t_{\gtheta},t_{\gtheta'})\le \overline a\ab{\gtheta-\gtheta'}_{*}^{\alpha}\quad\mbox{for all }\gtheta,\gtheta'\in \bs{\Theta},
\label{eq-conEh}
\end{equation}
where $\underline a,\overline a$ and $\alpha$ are positive numbers. Since $h$ is bounded by $1$,~\eref{eq-conEh} implies that $\bs{\Theta}$ is necessarily bounded. Let us denote by $\cB_{*}(\gtheta,r)$ the closed ball (with respect to the norm $\ab{\cdot}_{*}$) of center $\gtheta$ and radius $r$ in $\R^{d}$.

%
\begin{prop}\label{casconv}
Assume that $\bs{\Theta}$ is measurable and bounded in $\R^{d}$, that \eref{eq-conEh} holds and that $\nu$ satisfies
\begin{equation}
\nu(\cB_{*}(\gtheta,2x))\le\kappa_{\gtheta}(x)\nu(\cB_{*}(\gtheta,x))\quad\mbox{for all }\gtheta\in\bs{\Theta}\mbox{ and }x>0,
\label{eq-hypnu}
\end{equation}
where $\kappa_{\gtheta}(x)$ denotes some positive nonincreasing function on $\R_{+}$.
Then, for all $\gtheta\in\bs{\Theta}$,
\begin{equation}
\eta_{n}^{\overline{S},\gpi}(t_{\gtheta})\le\inf\left\{\eta>0\,\left|\,\eta^{2}\ge\frac{\log\left(\kappa_{\gtheta}\pa{[\eta/\overline a]^{1/\alpha}}\right)}{\gamma n}\left[\frac{\log(2\overline{a}/\underline{a})}{\alpha\log 2}+1\right]\right.\right\}.
\label{eq-eta-param0}
\end{equation}
If $\kappa_{\gtheta}(x)\equiv\kappa_{0}$ for all $\gtheta\in\bs{\Theta}$ and $x>0$, then 
\begin{equation}
\eta_{n}^{\overline{S},\gpi}(t_{\gtheta})\le\sqrt{\frac{\log\kappa_{0}}{\gamma n}\left[\frac{\log(2\overline{a}/\underline{a})}{\alpha\log 2}+1\right]}\quad\mbox{for all }\,\gtheta\in\bs{\Theta}.
\label{eq-eta-param9}
\end{equation}
In particular, if $\bs{\Theta}$ is convex and $\nu$ admits a density $g$ with respect to the Lebesgue measure $\lambda$ on $\R^{d}$ which satisfies 
\begin{equation}
\underline b\le g(\gtheta)\le \overline b\quad\mbox{for $\lambda$-almost all }\gtheta\in \bs{\Theta}
\quad\text{with }0<\underline b\le \overline b,
\label{eq-densborn}
\end{equation}
then (\ref{eq-hypnu}) holds with $\kappa_{\gtheta}(x)\equiv\kappa_{0}=2^{d}(\overline b/\underline b)$, hence, for all $t\in\overline{S}$,
\begin{equation}
\eta_{n}^{\overline{S},\gpi}(t)\le c\sqrt{\frac{d}{n}}\quad\mbox{with}\quad c^{2}=\frac{\log\left(2\left[\overline b/\underline b\right]^{1/d}\right)}{\gamma}\left[\frac{\log(2\overline{a}/\underline{a})}{\alpha\log 2}+1\right].
\label{eq-eta-param}
\end{equation}
\end{prop}
\subsection{Example}
Let us consider, in the density model with $n$ i.i.d.\ observations on $\R$, the following translation family $t_{\theta}(x)=t(x-\theta)$ where $t$ is the density of the Gamma$(2\alpha,1)$ distribution, namely
\[
t(x)=c(\alpha)x^{2\alpha-1}e^{-x}\1_{x\ge 0}\quad \text{with $0<\alpha<1$}
\] 
and $\theta$ belongs to the interval $\Theta=[-1,1]$. It is known from Example~1.3 p.287 of Ibragimov and Has{'}minski{\u\i}~\citeyearpar{MR620321} that, in this situation, (\ref{eq-conEh}) holds for $\ab{\cdot}_{*}$ the absolute value and $\underline a,\overline a$ depending on $\alpha$. Let us now derive upper bounds for $\eta_{n}^{\overline{S},\gpi}(t_{\gtheta})$ when $\nu$ has a density $g$ with respect to the Lebesgue measure.\vspace{2mm}\\
--- If $\nu$ is uniform on $\Theta$, then $\overline b=\underline b$ and \eref{eq-eta-param} is satisfied for some constant $c$ depending on $\alpha$ and $\gamma$ only.\vspace{2mm}\\
--- If $g(z)=(\xi/2)|z|^{\xi-1}\1_{[-1,1]}(z)$ with $0<\xi<1$, in order to compute $\kappa_{0}$ one has to compare the $\nu$-measures of the intervals $I_{1}=[(\theta-x)\vee-1,(\theta+x)\wedge1]$ and $I_{2}=[(\theta-2x)\vee-1,(\theta+2x)\wedge1]$ for $x>0$.
%
\begin{prop}\label{prop-nu1/nu2}
If in this example $g(z)=(\xi/2)|z|^{\xi-1}\1_{[-1,1]}(z)$, (\ref{eq-eta-param9}) holds since
\[
\nu(I_{2})\le\kappa_{0}\nu(I_{1})\quad \text{with}\quad \kappa_{0}=2^{1+\xi}\left(2^{\xi}-1\right)^{-1}.
\] 
\end{prop}
One should therefore note that if (\ref{eq-densborn}) is sufficient for $\kappa_{\gtheta}(r)$ to be constant, it is by no means necessary.\vspace{2mm}\\
--- Let us now set $g(z)=c_{\delta}^{-1}\exp\left[-\left(2|z|^{\delta}\right)^{-1}\right]\1_{[-1,1]}(z)$ for some $\delta>0$, which means that the prior puts very little mass around the point $\theta=0$. Then
%
\begin{prop}\label{prop-cas3}
In this example $\eta_{n}^{\overline{S},\gpi}(t_{0})\le Kn^{-\alpha/[2\alpha+\delta]}$,
for some $K$ depending on $\alpha,\delta,\overline{a},\underline{a}$ and $\gamma$.
\end{prop}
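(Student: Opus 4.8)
The goal is to control $\eta^{\overline{\gS},\gpi}(t_{0})$ at the point $\theta=0$, where the prior density $g(z)=c_{\delta}^{-1}\exp[-(2|z|^{\delta})^{-1}]\1_{[-1,1]}(z)$ degenerates extremely fast. The strategy is to go back to the basic sufficient condition~\eref{Eq-eta}: it suffices to find $r$ such that $\gpi(\sB^{\overline{\gS}}(t_{0},2r'))\le\exp[\gamma r'^{2}]\gpi(\sB^{\overline{\gS}}(t_{0},r'))$ for all $r'\ge r$, and then to bound $\eta^{\overline{\gS},\gpi}(t_{0})$ by the smallest such $r$. First I would translate Hellinger balls into parameter intervals: by the Ibragimov--Has'minskii estimate~\eref{eq-conEh} (with $\ab{\cdot}_*$ the absolute value and exponent $\alpha$, i.e.\ $\gh(\gt_\theta,\gt_{\theta'})/\sqrt n\asymp|\theta-\theta'|^{\alpha}$), a Hellinger ball of radius $r'$ around $t_{0}=t_{\theta=0}$ corresponds, up to the constants $\underline a,\overline a$, to an interval $[-\rho,\rho]$ with $\rho\asymp(r'/\sqrt n)^{1/\alpha}$. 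So, abbreviating $x=(r'/(\overline a\sqrt n))^{1/\alpha}$ and $y=(r'/(\underline a\sqrt n))^{1/\alpha}$ (so $[-x,x]\subset\{h\text{-ball of radius }r'\}\subset[-y,y]$, with $y=(\overline a/\underline a)^{1/\alpha}x$), it is enough to compare $\nu([-cy',cy'])$ to $\exp[\gamma r'^{2}]\,\nu([-x',x'])$; the doubling of the radius only rescales these inner radii by a fixed factor, exactly as in the proof of Proposition~\ref{casconv}.

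\textbf{Main estimate.} The crux is the behaviour of $G(x):=\nu([-x,x])=2c_{\delta}^{-1}\int_{0}^{x}\exp[-(2z^{\delta})^{-1}]\,dz$ as $x\to0^{+}$. By a Laplace / Watson-type argument the integrand is maximised at the endpoint and one gets $G(x)=\exp[-(1+o(1))(2x^{\delta})^{-1}]$; more usefully, for a crude but adequate bound, $G(x)\ge x\exp[-(2x^{\delta})^{-1}]$ (bounding the integrand on $[x/2,x]$, say) and $G(x)\le \exp[-(2x^{\delta})^{-1}]$ (bounding the integrand by its max). Hence, with the inner radii being comparable up to a constant $\Lambda=\Lambda(\alpha,\underline a,\overline a)$,
\begin{equation}
\log\frac{\gpi(\sB^{\overline{\gS}}(t_{0},2r'))}{\gpi(\sB^{\overline{\gS}}(t_{0},r'))}\le \frac{1}{2}\left[(x'/\Lambda)^{-\delta}-(\Lambda x')^{-\delta}\right]+\log(1/x')+O(1)\le C_{1}\,x'^{-\delta}+\log(1/x')
\label{eq-plan-gammatio}
\end{equation}
for $x'$ small, where $C_{1}$ depends only on $\delta,\alpha,\underline a,\overline a$, and $x'\asymp(r'/\sqrt n)^{1/\alpha}$. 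We then need $C_{1}x'^{-\delta}+\log(1/x')\le\gamma r'^{2}$ for all $r'\ge r$. Substituting $x'^{-\delta}\asymp(\sqrt n/r')^{\delta/\alpha}=n^{\delta/(2\alpha)}r'^{-\delta/\alpha}$, the dominant requirement is $n^{\delta/(2\alpha)}r'^{-\delta/\alpha}\lesssim r'^{2}$, i.e.\ $r'^{2+\delta/\alpha}\gtrsim n^{\delta/(2\alpha)}$, i.e.\ $r'\gtrsim n^{\delta/[2(2\alpha+\delta)]}$ (the logarithmic term $\log(1/x')\asymp\log n$ is of lower order and absorbed, possibly at the cost of enlarging the constant). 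Taking $r$ equal to this threshold gives $\eta^{\overline{\gS},\gpi}(t_{0})\le K n^{\delta/[2(2\alpha+\delta)]}$ with $K=K(\alpha,\delta,\gamma)$, which is the claim.

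\textbf{Where the difficulty lies.} The routine parts are the translation between Hellinger and Euclidean balls (already packaged in~\eref{eq-conEh} and used in Proposition~\ref{casconv}) and the final one-variable optimisation $r'^{2+\delta/\alpha}\gtrsim n^{\delta/(2\alpha)}$. The delicate point is the two-sided control of $G(x)=\nu([-x,x])$ near $0$: one must be careful that the upper bound on $G(2x)$ and the lower bound on $G(x)$ combine to give a \emph{difference} of the two exponents $\tfrac12(x^{-\delta}-(2x)^{-\delta})$ rather than just a single term, since otherwise one would lose the gain from doubling and the bound would blow up. Concretely I would verify that $(x/\Lambda)^{-\delta}-(\Lambda x)^{-\delta}=x^{-\delta}(\Lambda^{\delta}-\Lambda^{-\delta})\le \Lambda^{\delta}x^{-\delta}$, so the $x^{-\delta}$ power is indeed the right growth order and the constant is harmless; one must also check that the parasitic polynomial factor from the lower bound $G(x)\ge x\exp[\cdots]$ only contributes the $\log(1/x')$ term, which is genuinely lower order. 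A secondary, purely technical, obstacle is confirming measurability so that Proposition~\ref{casconv} (or at least~\eref{Eq-eta}) is applicable here — but this follows from Assumption~\ref{hypo-mes2-dens} since the translation family is continuous in $\theta$ and $\Theta=[-1,1]$ is compact convex, exactly as in Example~II's first bullet. No contradiction arises with~\eref{eq-borneETA1}-type bounds because here the prior is \emph{not} bounded below near $0$, so the general $c\sqrt d$ estimate does not apply and the genuinely larger, $n$-dependent rate $n^{\delta/[2(2\alpha+\delta)]}$ is unavoidable.
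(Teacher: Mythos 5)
Your strategy is the same as the paper's: compute the two-sided asymptotic behaviour of $G(x)=\nu([-x,x])$ near $0$, show $\log\bigl[G(2x)/G(x)\bigr]\lesssim x^{-\delta}$, translate through $x\asymp (r'/\sqrt n)^{1/\alpha}$ using \eref{eq-conEh}, and optimize $r'^{2+\delta/\alpha}\gtrsim n^{\delta/(2\alpha)}$; both you and the paper plug into the $\kappa_{0}$-machinery of Proposition~\ref{casconv}.

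One claim in your write-up, however, is wrong as stated: $G(x)\ge x\exp\bigl[-(2x^{\delta})^{-1}\bigr]$ is in fact an \emph{upper}-bound-flavoured inequality, not a lower bound. The integrand $z\mapsto\exp[-(2z^{\delta})^{-1}]$ is increasing and attains its maximum at $z=x$, so $\int_0^x\exp[-(2z^{\delta})^{-1}]\,dz\le x\exp[-(2x^{\delta})^{-1}]$; the paper's integration by parts shows the true lower bound carries an extra $x^{\delta}$, namely $G(x)\gtrsim x^{\delta+1}\exp[-(2x^{\delta})^{-1}]$, which strictly contradicts your claim for small $x$. Also, the method you cite (bounding the integrand on $[x/2,x]$) yields $G(x)\gtrsim x\exp[-2^{\delta-1}x^{-\delta}]$, with exponent constant $2^{\delta-1}>1/2$, not $1/2$. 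Neither error is fatal: the polynomial prefactor contributes only the $\log(1/x')$ term you already identified as lower-order, and a positive coefficient on $x^{-\delta}$ still survives in the exponent of the ratio (whether you use $2^{\delta-1}-2^{-\delta-1}$ from the crude bound or the paper's sharper $\tfrac12(1-2^{-\delta})$). But as written the lower bound on $G(x)$ would not hold up to scrutiny; either fix the constant to $2^{\delta-1}$ and check the difference of exponents is still positive, or reproduce the paper's integration-by-parts estimate to get the sharp $x^{\delta+1}\exp[-(2x^{\delta})^{-1}]$ lower bound with matching exponent constant. With that repair your proof is correct and matches the paper's.
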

It is not difficult to check that in this situation the family $\sF$ defined by~\eref{def-Fd} consists of elements $f$ for which either $f$ or $-f$ is unimodal. In particular, for $f\in\sF$, the levels sets $\{f>u\}$ with $u\in\R$ consist of a union of at most two disjoint intervals. It follows from Lemma~1 of Baraud and Birg\'e~\citeyearpar{MR3565484} that $\sF$ is then weak-VC major with dimension not larger than 4 so that, as a consequence of Proposition~\ref{prop-VC}, $\eps_{n}^{\overline{S}}(\gs)\le C(\log n)^{3/2}$ for some universal constant $C>0$ and all densities $\gs\in\sL^{n}$. Applying Theorem~\ref{thm-main} when the true parameter $\theta$ is $0$ leads to a bound for \eref{eq-rb2} of the form
\[
\overline r_{n}\le K\left[n^{-\alpha/(2\alpha+\delta)}+\sqrt{\frac{\log^{3}n}{n}}+
\sqrt{\frac{\xi+\xi'+2.61}{n}}\right],
\]
which is of the order of $n^{-\alpha/(2\alpha+\delta)}$ and  clearly depends on the relative values of $\alpha$ and $\delta$. In particular, if $\alpha=1/2$, which corresponds to the exponential density, we get a bound for $\overline{r}_{n}$ of order $n^{-1/[2(1+\delta)]}$.

\section{Connexion with classical Bayes estimators}\label{RB}
Throughout this section we assume that the data $X_{1},\ldots,X_{n}$ are i.i.d.\ with density $s$ on the measured space $(\sX,\sA,\mu)$.

We consider a parametric set of real nonnegative functions $\{t_{\gtheta},\,\gtheta\in\bs{\Theta}\}$ satisfying $\int_{\sX}t_{\gtheta}(x)\,d\mu(x)=1$, indexed by some subset $\bs{\Theta}$ of $\R^{d}$ and such that the mapping $\gtheta\mapsto P_\gtheta=t_\gtheta\cdot\mu$ is one-to-one so that our statistical model be identifiable. Our model for $s$ is $\overline{S}=\{t_{\gtheta},\,\gtheta\in\bs{\Theta}\}$. We set $\norm{t}_{\infty}=\sup_{x\in\sX}\ab{t(x)}$ for any function $t$ on $\sX$. Since the mapping $\gtheta\mapsto t_\gtheta$ is one-to-one, the Hellinger distance can be transfered to $\bs{\Theta}$ and we shall write $h(\gtheta,\gtheta')$ for $h(t_\gtheta,t_{\gtheta'})=h(P_{\gtheta},P_{\gtheta'})$.

We consider on $(\overline{S},h)$ the Borel $\sigma$-algebra $\sS$ and, given a prior $\pi$ on $(\overline{S},\sS)$, we consider both the usual Bayes posterior distribution $\pi^{L}_{\!\gX}$ and our $\rho$-posterior distribution $\pi_{\!\gX}$ given by~\eref{Eq-psB} with $\beta=4$. A natural question is whether these two distributions are similar or not, at least asymptotically when $n$ tends to $+\infty$. This question is suggested by the fact, proven in Section~5.1 of BBS, that, under suitable regularity assumptions, the maximum likelihood estimator is a $\rho$-estimator, at least asymptotically.

In order to show that the two distributions $\pi^{L}_{\!\gX}$ and $\pi_{\!\gX}$ are asymptotically close we shall introduce the following assumptions that are certainly not minimal but at least lead to simpler proofs.
\begin{ass}\label{A-MLE}\mbox{}\vspace{-3mm}
\begin{listi}
	\item\label{H4-mi} The function $(x,\gtheta)\mapsto t_{\gtheta}(x)$ is measurable from $\left(\sX\times \bs{\Theta},\sA\otimes\sG(\bs{\Theta})\st\right)$ to $(\R_{+},\sR)$ where $\sG(\bs{\Theta})$ and $\sR$ denote respectively the Borel $\sigma$-algebras on $\bs{\Theta}\subset \R^{d}$ and $\R_{+}$. 
	\item\label{H4-i} The parameter set $\bs{\Theta}$ is a compact and convex subset of $\bs{\Theta}'\subset\R^d$ and the true density $s=t_{\gvartheta}$ belongs to $\overline{S}$.
	\item\label{H4-ii} There exists a positive function $A_{2}$ on $\bs{\Theta}$ such that the following relationship between the Hellinger and Euclidean distances holds: 
\begin{equation}
\frac{A_2(\gtheta')}{2}\ab{\overline\gtheta-\gtheta}\le h\left(\frac{t_{\overline\gtheta}+t_{\gtheta'}}{2},\frac{t_{\gtheta}+t_{\gtheta'}}{2}\right)\quad\mbox{for all }\overline\gtheta,\,\gtheta,\,\gtheta'\in \bs{\Theta}.
\label{eq-A4ii}
\end{equation}
	%
	\item\label{H4-iii} Whatever $\gtheta\in \bs{\Theta}$, the density $t_{\gtheta}$ is positive on $\sX$ and there exists a constant $A_1$ such that
\[
\norm{\sqrt{t_{\gtheta}\over t_{\gtheta'}}-\sqrt{t_{\overline\gtheta}\over t_{\gtheta'}}}_{\infty}\le  
A_1\left|\overline\gtheta-\gtheta\right|\quad\mbox{for all }\gtheta,\,\overline\gtheta
\mbox{ and }\gtheta'\in\bs{\Theta}.
\]
\end{listi}
\end{ass}
These assumptions imply that $(\overline S,h)$ is a metric space and that the function $t\mapsto t(x)$ from $(\overline S,h)$ to $(0,+\infty)$ is continuous whatever $x\in\sX$. Furthermore, Assumption~\ref{A-MLE}-$\ref{H4-iii}$ implies that the Hellinger distance on $\bs{\Theta}$ is controlled by the Euclidean one in the following way:
\[
h^{2}(\gtheta,\overline{\gtheta})=\frac{1}{2}\int\pa{\sqrt{t_{\gtheta}\over t_{\gtheta'}}-\sqrt{t_{\overline\gtheta}\over t_{\gtheta'}}}^{2}t_{\gtheta'}\,d\mu\le\frac{A_{1}^{2}}{2}\left|\overline\gtheta-\gtheta\right|^{2}.
\]
Since the concavity of the square root implies that
\begin{equation}
h\left(\frac{t_{\overline\gtheta}+t_{\gtheta'}}{2},\frac{t_{\gtheta}+t_{\gtheta'}}{2}\right)\le\frac{1}{2}
h\left(\overline\gtheta,\gtheta\right),
\label{eq-hconc}
\end{equation}
we derive from (\ref{eq-A4ii}) with $\gtheta'=\gvartheta$ that $h\left(\overline\gtheta,\gtheta\right)\ge A_{2}(\gvartheta)\ab{\overline\gtheta-\gtheta}$. The Hellinger and Euclidean distances are therefore equivalent on $\bs{\Theta}$: 
\begin{equation}
A_2\ab{\overline\gtheta-\gtheta}\le h\left(\overline\gtheta,\gtheta\right)=h(t_{\overline \gtheta},t_{\gtheta})\le A_3\ab{\overline\gtheta-\gtheta}
\quad\mbox{for all }\overline\gtheta,\,\gtheta\in \bs{\Theta},
\label{metric-h}
\end{equation}
with $A_2=A_{2}(\gvartheta)<A_3=A_{1}/\sqrt{2}$. 

In particular, the mapping $t_{\gtheta}\mapsto \gtheta$ is continuous from $(\overline S,h)$ to $(\bs{\Theta},\ab{\cdot})$, hence measurable from $(\overline S,\sS)$ to $(\bs{\Theta},\sG(\bs{\Theta}))$ and so are $f:(x,t_{\gtheta})\mapsto (x,\gtheta)$ from $(\sX\times \overline S,\sA\otimes \sS)$ to $(\sX\times \bs{\Theta},\sA\otimes\sG(\bs{\Theta}))$ and $(x,t_{\gtheta})\mapsto t_{\gtheta}(x)$ from $(\sX\times \overline S,\sA\otimes \sS)$ to $(\R_{+},\sR)$ as the composition of $f$ with $(x,\gtheta)\mapsto t_{\gtheta}(x)$ which is measurable under Assumption~\ref{A-MLE}-\ref{H4-i}. Consequently Assumption~\ref{hypo-mes}-\ref{Hi} is satisfied and so is~\eref{eq-t-t_k} if we take for $S$ the image by the mapping $\gtheta\mapsto t_{\gtheta}$ of a countable and dense subset of $(\bs{\Theta},\ab{\cdot})$ and use the fact that for all $x\in \sX$, the function $t\mapsto t(x)$ is continuous and positive on $(\overline S,h)$.

We deduce from (\ref{eq-A4ii}) and (\ref{eq-hconc}) that
\begin{equation}
\frac{A_2}{2}\ab{\overline\gtheta-\gtheta}\le h\left(\frac{t_{\overline\gtheta}+s}{2},\frac{t_{\gtheta}+s}{2}\right)\le \frac{A_3}{2}\ab{\overline\gtheta-\gtheta} \quad\mbox{for all }\overline\gtheta,\,\gtheta\in \bs{\Theta},
\label{metric-h'}
\end{equation}
and since $\psi$ is a Lipschitz function with Lipschitz constant 2, Assumption~\ref{A-MLE}-$\ref{H4-iii}$ implies that
\[
\norm{\psi\left(\sqrt{t_{\gtheta}\over t_{\gtheta'}}\right)-\psi\left(\sqrt{t_{\overline\gtheta}\over t_{\gtheta'}}\right)}_{\infty}\le 2A_1\left|\overline\gtheta-\gtheta\right|\quad\mbox{for all }\gtheta,\,\overline\gtheta\mbox{ and }\gtheta'\in\bs{\Theta}.
\]
If $\bs{\Theta}$ is a compact subset of an open set $\bs{\Theta}'$ and the the parametric family $\{t_{\gtheta},\,\gtheta\in\bs{\Theta}'\}$ is regular with invertible Fisher Information matrix, the same holds for the family $\{[t_{\gtheta}+t_{\gtheta'}]/2,\,\gtheta\in\bs{\Theta}'\}$ for each given $\gtheta'$ in $\bs{\Theta}$, which implies that Assumption~\ref{A-MLE}-$\ref{H4-ii}$ holds.

%
\begin{ass}\label{A-prior}
The prior $\pi$ on $(\overline{S},\sS)$ is the image via the mapping $\gtheta\mapsto t_\gtheta$ of a probability $\nu$ on $(\bs{\Theta},\sG(\bs{\Theta})$ that satisfies the following requirements for suitable constants $B\ge1$ and $\overline{\gamma}\in[1,4)$: if $\cB(\gtheta,r)$ denotes the closed Euclidean ball in $\bs{\Theta}$ with center $\gtheta$ and radius $r$, whatever $\gtheta\in \bs{\Theta}$ and $r>0$, 
\begin{equation}
\nu\!\left[\st\cB\!\left(\gtheta,2^kr\right)\right]\le\exp\left[B\overline{\gamma}^{k}\right]\nu\!\left[\st\cB(\gtheta,r)\right]\quad\mbox{for all }k\in\N\et.
\label{Eq-AS2}
\end{equation}
\end{ass}
The convexity of $\bs{\Theta}$ and the well-known formulas for the volume of Euclidean balls imply that this property holds for all probabilities which are absolutely continuous with respect to the Lebesgue measure with a density which is bounded from above and below but other situations are also possible. One simple example would be $\bs{\Theta}=[-1,1]$ and $\nu$ with density $(1/2)(\alpha+1)|x|^{\alpha}$, $\alpha>0$ with respect to the Lebesgue measure.
%
%
\begin{thm}\label{thm-B}
Under Assumptions~\ref{A-MLE} and~\ref{A-prior}, one can find two functions $C$ and $n_{1}$ on $(0,+\infty)$, also depending on $s$ and all the parameters involved in these assumptions but independent of $n$, such that, for all $n\ge n_{1}(z)$,
\[
\P_{\gs}\left[h^{2}\left(\pi_{\!\gX}^{L},\pi_{\!\gX}\right)\le C(z){(\log n)^{3/2}\over \sqrt{n}}\right]\ge1-e^{-z}
\quad\mbox{for all }z >0.
\]
\end{thm}
This means that, under suitably strong assumptions, the usual posterior and our $\rho$-posterior distributions are asymptotically the same which shows that our construction is a genuine generalization  of the classical Bayesian approach. 
It also implies that the Bernstein-von Mises Theorem also holds for $\pi_{\!\gX}$ as shown by the following result.
%
\begin{cor}\label{cor-BvM}
Let Assumptions~\ref{A-MLE} and~\ref{A-prior} hold, $(\widehat{\gtheta}_{n})$ be an asymptically efficient sequence of estimators of the true parameter $\gvartheta$ and assume that the following version of the Bernstein-von Mises Theorem is true:
\[
\norm{\pi_{\!\gX}^{L}-\cN\left(\widehat{\gtheta}_{n},[nI(\gvartheta]^{-1}\right)}_{TV}\CP{n\rightarrow+\infty}0,
\]
where $I$ denotes the Fisher Information matrix and $\norm{\cdot}_{TV}$ the total variation 
norm. Then the $\rho$-posterior distribution also satisfies the same Bernstein-von Mises Theorem, i.e.
\[
\norm{\pi_{\!\gX}-\cN\left(\widehat{\gtheta}_{n},[nI(\gvartheta]^{-1}\right)}_{TV}\CP{n\rightarrow+\infty}0.
\]
\end{cor}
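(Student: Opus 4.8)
The plan is to deduce the statement directly from Theorem~\ref{thm-B}, combining it with the triangle inequality for the total variation distance and the comparison between the total variation and Hellinger distances recalled in Section~\ref{N2}. First I would write, for every $n\ge1$,
\[
\norm{\pi_{\!\gX}-\cN\left(\widehat{\gtheta}_{n},[nI(\gvartheta)]^{-1}\right)}_{TV}\le\norm{\pi_{\!\gX}-\pi_{\!\gX}^{L}}_{TV}+\norm{\pi_{\!\gX}^{L}-\cN\left(\widehat{\gtheta}_{n},[nI(\gvartheta)]^{-1}\right)}_{TV}.
\]
The second term on the right-hand side converges to $0$ in $\P_{\gs}$-probability by the Bernstein--von Mises property assumed for $\pi_{\!\gX}^{L}$, so the whole matter reduces to proving that $\norm{\pi_{\!\gX}-\pi_{\!\gX}^{L}}_{TV}\CP{n\rightarrow+\infty}0$.

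For this I would bound the total variation distance between the two random posteriors by their Hellinger distance: using the inequality $\sup_{A}\ab{P(A)-Q(A)}\le\sqrt2\,h(P,Q)$, valid for any pair of probabilities $P,Q$, one gets $\norm{\pi_{\!\gX}-\pi_{\!\gX}^{L}}_{TV}\le c\,h\left(\pi_{\!\gX}^{L},\pi_{\!\gX}\right)$ for a universal constant $c$ (depending only on the normalisation convention chosen for $\norm{\cdot}_{TV}$). Then I would fix an arbitrary $z>0$ and apply Theorem~\ref{thm-B}: on a measurable event of $\P_{\gs}$-probability at least $1-e^{-z}$ one has, simultaneously for all $n\ge1$,
\[
h\left(\pi_{\!\gX}^{L},\pi_{\!\gX}\right)\le\sqrt{C_0}\,\frac{\log^{3/4}(1+n)}{n^{1/4}},
\]
and the right-hand side tends to $0$ as $n\to+\infty$. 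Hence, for any fixed $\eps>0$, $\limsup_{n\to+\infty}\P_{\gs}\left[h\left(\pi_{\!\gX}^{L},\pi_{\!\gX}\right)>\eps\right]\le e^{-z}$, and, $z$ being arbitrary, this gives $h\left(\pi_{\!\gX}^{L},\pi_{\!\gX}\right)\CP{n\rightarrow+\infty}0$, so that $\norm{\pi_{\!\gX}-\pi_{\!\gX}^{L}}_{TV}\CP{n\rightarrow+\infty}0$ as well.

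Feeding this back into the first display and using that the sum of two sequences of random variables each converging to $0$ in probability also converges to $0$ in probability yields the claimed convergence. I do not expect any genuine obstacle in this deduction: all of the analytic work is already contained in Theorem~\ref{thm-B}, and the asymptotic efficiency of $(\widehat{\gtheta}_{n})$ plays no role in the argument --- it only serves to identify the limiting Gaussian in the statement. The only point deserving a little care is that the constant $C_0$ of Theorem~\ref{thm-B} depends on $z$, which is why one first fixes $z$, runs the estimate on the corresponding high-probability event, and only then lets $z$ tend to $+\infty$ in order to upgrade the high-probability bound into a convergence in probability.
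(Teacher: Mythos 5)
Your proof is correct and follows essentially the same route as the paper's: a triangle inequality for the total-variation distance, the comparison $\norm{P-Q}_{TV}\le\sqrt{2}\,h(P,Q)$, and Theorem~\ref{thm-B} to make $h(\pi_{\!\gX}^{L},\pi_{\!\gX})$ vanish in probability. The only point the paper's one-line proof leaves implicit — and that you handle correctly — is that the constant $C_0$ in Theorem~\ref{thm-B} depends on $z$, so one must fix $z$, extract the $\limsup$ bound $e^{-z}$, and only then send $z\to+\infty$ to upgrade the high-probability statement to convergence in probability.
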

%
\begin{proof}
It follows from the triangular inequality and the classical relationship between Hellinger and total variation distances given by~\eref{eq-h-TV}.
\end{proof}
%

\section{Combining different models}\label{MM}

\subsection{Priors and models}
In the case of simple parametric problems with parameter set $\bs{\Theta}$, such as those we considered in Section~\ref{RB}, $\overline{S}$ is the image of a subset of some Euclidean space $\R^d$ and one often chooses for $\gpi$ the image of a probability on $\bs{\Theta}$ which has a density with respect to the Lebesgue measure. The choice of a convenient prior $\gpi$ becomes more complex when $\overline{S}$ is a complicated function space which is very inhomogeneous with respect to the Hellinger distance. In such a case it is often useful to introduce ``models'', that is to consider $\overline{S}$ as a countable union of more elementary and homogeneous disjunct subsets $\overline{S}_{m}$, $m\in\cM$, and to choose a prior $\gpi_m$ on each $\overline{S}_{m}$ in such a way that Theorem~\ref{thm-main} applies to each model $\overline{S}_{m}$ and leads to a non-trivial result. It remains to put all models together by choosing some prior $\nu$ on $\cM$ and defining our final prior $\gpi$ on $\overline{S}=\bigcup_{m\in\cM}\overline{S}_{m}$ as $\sum_{m\in\cM}\nu(\{m\})\gpi_m$. This corresponds to a hierarchical prior.

One can as well proceed in the opposite way, starting from a global prior $\gpi$ on $\overline{S}$ and partitioning $\overline{S}$ into subsets $\overline{S}_{m}$, $m\in\cM$, of positive prior probability, then setting $\nu(\{m\})=\gpi(\overline{S}_{m})$ and defining $\gpi_m$ as the conditional distribution of a random element $t\in\overline{S}$ when it belongs to $\overline{S}_{m}$. The two points of view are actually clearly equivalent, the important fact for us being that the pairs $(\overline{S}_{m},\gpi_m)$ are such that Theorem~\ref{thm-main} can be applied to each of them. 

Throughout this section, we work within the following framework. Given a countable sequence of disjunct probability spaces  $(\overline S_{m},\sS_{m},\gpi_{m})_{m\in\cM}$ on $(\sX,\sA)$, we consider $\overline S=\bigcup_{m\in\cM}\overline S_{m}$ endowed with the $\sigma$-algebra $\sS$ defined as
\[
\sS=\{A\subset \overline S,\ A\cap \overline S_{m}\in\sS_{m}\ \mbox{for all}\ m\in\cM\}.
\]

In order to define our prior, we introduce a mapping $\pen$ from $\cM$ into $\R_{+}$ that will also be involved in the definition of our $\rho$-posterior distribution. The prior $\gpi$ on $\overline{S}$ is given by
\begin{equation}\label{def-prior-sm}
\gpi(A)=\Delta\sum_{m\in\cM}\int_{A\cap \overline S_{m}}\exp[-\beta\pen(m)]\,d\gpi_{m}(t)\quad \mbox{for all}\ A\in\sS
\end{equation}
with
\[
\Delta=\left(\sum_{m\in\cM}\int_{\overline S_{m}}\exp[-\beta\pen(m)]\,d\gpi_{m}(t)\right)^{-1},
\]
so that $\gpi$ is a genuine prior. This amounts to put a prior weight proportional to $\exp[-\beta\pen(m)]$ on the model $\overline{S}_{m}$. We shall assume the following.

\begin{ass}\label{H-mod}\text{}\vspace{-1mm}
\begin{listi}
\item\label{H'i}
For all $m\in\cM$ the function $(x,t)\to t(x)$ on $\sX\times\overline{S}_{m}$ is measurable with respect to the $\sigma$-algebra $\sA\otimes\sS_{m}$.
\item\label{H'ii}
For all $m\in\cM$ there exists a countable subset $S_{m}$ of \,$\overline S_{m}$ with the following property:: given $t\in\overline{S}_{m}$ and $t'\in S=\bigcup_{m'\in\cM}S_{m'}$, one can find a sequence $(t_{k})_{k\ge0}$ in $S_{m}$ such that (\ref{eq-t-t_k}) holds for all $x\in\sX$.
\item\label{H'iii} 
There exists a mapping $m\mapsto \overline \eps_{m}^{2}$ from $\cM$ to $\R_{+}$ such that, whatever the density $\gs\in\sL^{n}$,
\begin{equation}
\eps_{n}^{\overline{S}_{m}\cup \overline{S}_{m'}}(\gs)\le \sqrt{\overline \eps_{m}^{2}+\overline \eps_{m'}^{2}}\quad\mbox{for all }m,m'\in\cM.
\label{eq-eps}
\end{equation}
\item\label{H'iv} 
Given a set $\{L_{m},m\in \cM\}$ of nonnegative numbers satisfying 
\begin{equation}
\sum_{m\in\cM}\exp[-L_{m}]=1,
\label{eq-Lm}
\end{equation}
the penalty function pen is lower bounded in the following way:
\begin{equation}
\pen(m)\ge c_{5}n\overline \eps_{m}^{2}+\left(c_{6}+\beta^{-1}\right)L_{m}\quad\mbox{for all }m\in\cM, 
\label{eq-pen}
\end{equation}
with constants $c_{5}$ and $c_{6}$ defined in (\ref{eq-constantes}).
\end{listi}
\end{ass}

%
\subsection{The results}
We define the $\rho$-posterior distribution $\overline{\gpi}_{\!\gX}$ on $\overline{S}$ by its density with respect to the prior $\gpi$ given by~\eref{def-prior-sm} as follows:
\begin{equation}\label{def-post2}
{d\overline{\gpi}_{\!\gX}\over d\gpi}(t)={\exp\left[-\beta\overline \gPsi(\gX,t)\right]\over \int_{\overline S}\exp\left[-\beta\overline \gPsi(\gX,t')\right]d\gpi(t')}\quad\mbox{for all }t\in\overline S,
\end{equation}
with
\[
\overline \gPsi(\gX,t)=\sup_{m\in\cM}\sup_{t'\in S_{m}}\left[\gPsi(\gX,t,t')-\pen(m)\right].
\]
Note that if we choose $\beta=1$ and replace $\gPsi(\gX,t,t')$ by the difference of the log-likelihoods $\sum_{i=1}^{n} \log t'(X_{i})-\sum_{i=1}^{n}\log t(X_{i})$, $\overline \gpi_{\gX}$ is the usual posterior distribution corresponding to the prior $\gpi$.
We finally, introduce a mapping $\overline \eta$ on $\overline S$ which associates to an element $t\in \overline{S}_{m}$ with $m\in\cM$ the quantity $\overline\eta_{n}^{2}(t)$
given by
\begin{equation}\label{eq-eta}
\overline \eta_{n}^{2}(t)=\inf_{r\in(0,1]}\cro{c_{7}r^{2}+{1\over 2n\beta}\log\pa{1\over \gpi_{m}(\sB^{\overline S_{m}}(t,r))}}\quad \mbox{for all $t\in\overline S_{m}$},
\end{equation}
which only depends on the choice of the prior $\gpi_{m}$ on $\overline S_{m}$. Taking $r=1$, we see that $ \eta_{n}^{2}(t)\le c_{7}$ for all $t\in\overline{S}$. Moreover, if, for some $\eta\in(0,1]$ and $\lambda>0$,
\[
\gpi_{m}\!\left(\sB^{\overline{S}_{m}}(t,r)\right)\ge\exp\left[-\lambda n r^{2}\right]\quad\mbox{ for all }r\ge \eta,\;m\in\cM\text{ and }t\in\overline S_{m},
\]
then 
\[
\overline \eta_{n}^{2}(t)\le\inf_{r\ge\eta}\cro{c_{7}r^{2}+{\lambda r^{2}\over 2\beta}}=
\cro{c_{7}+{\lambda\over 2\beta}}\eta^{2},
\]
a result which is similar to the one we derived for $\eta_{n}^{\overline{S},\gpi}(t)$ in Section~\ref{sect-eta} under an analogous assumption.
%
\begin{thm}\label{thm-main2}
Let Assumption~\ref{H-mod} hold. For all $\xi>0$ and  whatever the density $\gs\in\sL^{n}$ of $\gX$, there exists a set $\Omega_{\xi}$ with $\P_{\gs}(\Omega_{\xi})\ge1-e^{-\xi}$ and such that  
\[
\overline{\gpi}_{\gX(\omega)}\left(\sB^{\overline{S}}(\gs,r)\right)\ge 1-e^{-\xi'}
\quad\mbox{for all }\omega\in \Omega_{\xi},\:\xi'>0\mbox{ and }r\ge\overline r_{n}
\]
with
\begin{align*}
\overline r_{n}^{2}=&\inf_{m\in\cM}\inf_{\overline s\in\overline S_{m}}\cro{{3c_{7}\over c_{8}}h^{2}(\gs,\overline s)-h^{2}(\gs,\overline{S})+\frac{2}{c_{8}}\pa{\frac{2\pen(m)}{n}+\overline \eta_{n}^{2}(\overline s)-\frac{L_{m}}{\beta n}}}\\&+c_{9}\frac{\xi+\xi'+2.4}{n}
\end{align*}
and constants $c_{j}, 7\le j\le9$ defined in (\ref{eq-constantes}).
\end{thm}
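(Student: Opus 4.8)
The argument runs parallel to that of Theorem~\ref{thm-main}, the genuinely new ingredient being a union bound over the countable family $(\overline{\gS}_{m})_{m\in\cM}$ that the penalty function, through~\eref{eq-pen}, is precisely designed to absorb. One first notes that, by the two measurability clauses of Assumption~\ref{H-mod}, the pair $(\overline{\gS},\sbS)$ together with $\gS=\bigcup_{m}\gS_{m}$ satisfies Assumption~\ref{hypo-mes}, so that $\overline{\gpi}_{\gX}$, the maps $\gt\mapsto\overline{\gPsi}(\gX,\gt)$ (countable suprema of random variables), and the balls $\sB^{\overline{\gS}}(\gs,r)$ are well defined and measurable. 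Fix $\xi>0$; anticipating the stated form of $\overline{r}^{2}$, fix a pair $(m_{0},\overline{\gs})$ with $\overline{\gs}\in\overline{\gS}_{m_{0}}$ and a radius $\rho>0$ nearly realising the infimum defining $\overline{\eta}^{2}(\overline{\gs})$ in~\eref{eq-eta}. After the normalising constant $\Delta$ of~\eref{def-prior-sm} cancels, the quantity to be controlled is the ratio
\[
\overline{\gpi}_{\gX}\!\left(\sB^{\overline{\gS}}(\gs,r)^{c}\right)=\frac{\sum_{m\in\cM}e^{-\beta\pen(m)}\!\int_{\overline{\gS}_{m}\cap\sB^{\overline{\gS}}(\gs,r)^{c}}\!\!e^{-\beta\overline{\gPsi}(\gX,\gt)}d\gpi_{m}(\gt)}{\sum_{m\in\cM}e^{-\beta\pen(m)}\!\int_{\overline{\gS}_{m}}\!e^{-\beta\overline{\gPsi}(\gX,\gt)}d\gpi_{m}(\gt)},
\]
which I would bound from below by keeping only $m=m_{0}$ and restricting that integral to $\sB(\overline{\gs},\rho)$, and from above term by term.

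The heart of the proof is a uniform control of the centred process $\bsZ(\gX,\gt,\gt')=\gPsi(\gX,\gt,\gt')-\E_{\gs}[\gPsi(\gX,\gt,\gt')]$, carried out model by model. For fixed $m$, decomposing $\overline{\gS}_{m}$ and the range of $\gt'\in\overline{\gS}_{m}\cup\overline{\gS}_{m_{0}}$ into Hellinger shells around $\gs$ and applying Talagrand's inequality exactly as in the proof of Theorem~\ref{thm-main} (using $\ab{\psi}\le1$), the supremum of $\ab{\bsZ}$ over a shell of radius $y$ is bounded, on an event of probability at least $1-e^{-(\xi+L_{m}+j)}$ with $j$ the shell index, by $\gw^{\overline{\gS}_{m}\cup\overline{\gS}_{m_{0}}}(\gs,y)$ plus a term of order $\sqrt{y^{2}(\xi+L_{m}+j)}+(\xi+L_{m}+j)$. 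By~\eref{def-en}, $\gw^{\overline{\gS}_{m}\cup\overline{\gS}_{m_{0}}}(\gs,y)\le 6c_{0}^{-1}y^{2}$ once $y\ge\eps^{\overline{\gS}_{m}\cup\overline{\gS}_{m_{0}}}(\gs)$, and by~\eref{eq-eps} the latter is $\le\sqrt{\overline{\eps}_{m}^{2}+\overline{\eps}_{m_{0}}^{2}}$, the smaller scales contributing only a crude additive $\overline{\eps}_{m}^{2}+\overline{\eps}_{m_{0}}^{2}$. Summing these probabilities over $j$ (geometric) and over $m\in\cM$ (using $\sum_{m}e^{-L_{m}}=1$, cf.~\eref{eq-Lm}) produces a single event $\Omega_{\xi}$ with $\P_{\gs}(\Omega_{\xi})\ge1-e^{-\xi}$ on which all these bounds hold simultaneously.

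On $\Omega_{\xi}$ I would then combine this with the deterministic Hellinger drift bound for $\E_{\gs}[\gPsi(\gX,\gt,\gt')]$ of Baraud and Birg\'e~\citeyearpar{Baraud:2016kq} — which, via antisymmetry of $\gPsi$, yields $\E_{\gs}[\gPsi(\gX,\gt,\overline{\gs})]\ge c_{8}\gh^{2}(\gs,\gt)-C\gh^{2}(\gs,\overline{\gs})$ and carries a genuine negative multiple of $\gh^{2}(\gs,\gt')$ in general — and with~\eref{eq-pen}, so that $e^{-\beta\pen(m)}\le e^{-\beta c_{5}\overline{\eps}_{m}^{2}}e^{-\beta(c_{6}+\beta^{-1})L_{m}}$ swallows the model-$m$ complexity $\overline{\eps}_{m}^{2}$ and the union-bound term while leaving a residual factor $e^{-\beta L_{m}}$ keeping the series convergent. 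Since $6c_{0}^{-1}\ll c_{8}$, the $\gh^{2}(\gs,\gt)$-part of $\bsZ$ erodes only a negligible share of the drift; one gets, for $\gt\in\overline{\gS}_{m}$ with $\gh(\gs,\gt)\ge r$, $e^{-\beta\overline{\gPsi}(\gX,\gt)-\beta\pen(m)}\le e^{2\beta\pen(m_{0})}\exp[-\tfrac{c_{8}}{2}\beta r^{2}+\beta C(\gh^{2}(\gs,\overline{\gs})+\xi)-\beta L_{m}]$ (via $\overline{\gPsi}(\gX,\gt)\ge\gPsi(\gX,\gt,\overline{\gs})-\pen(m_{0})$), and symmetrically $\overline{\gPsi}(\gX,\gt)\le C'(\gh^{2}(\gs,\overline{\gs})+\rho^{2}+\xi)$ for $\gt\in\overline{\gS}_{m_{0}}\cap\sB(\overline{\gs},\rho)$, where the negative $\gh^{2}(\gs,\gt')$ term of the drift beats the $c_{0}^{-1}\gh^{2}(\gs,\gt')$ produced by $\bsZ$ so that the supremum over all $m$ and all $\gt'\in\gS_{m}$ stays finite; here the numerical values in~\eref{eq-constantes} are chosen precisely so that $6c_{0}^{-1}$ fits below $c_{5}$ and $c_{6}$ is large enough.

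Feeding these two bounds into the ratio (each $\gpi_{m}$-integral being $\le1$, the numerator series convergent by the residual $e^{-\beta L_{m}}$, the denominator $\ge e^{-\beta\pen(m_{0})}e^{-\beta C'(\gh^{2}(\gs,\overline{\gs})+\rho^{2}+\xi)}\gpi_{m_{0}}(\sB(\overline{\gs},\rho))$) and taking logarithms, the inequality one needs for $\overline{\gpi}_{\gX}(\sB^{\overline{\gS}}(\gs,r)^{c})\le e^{-\xi'}$ is of the form $\tfrac{c_{8}}{2}r^{2}\ge 2\pen(m_{0})+C''(\gh^{2}(\gs,\overline{\gs})+\rho^{2}+\xi)+\beta^{-1}\xi'+\beta^{-1}\log(1/\gpi_{m_{0}}(\sB(\overline{\gs},\rho)))$. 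As $\rho$ nearly minimises~\eref{eq-eta}, $\tfrac1{2\beta}\log(1/\gpi_{m_{0}}(\sB(\overline{\gs},\rho)))\le\overline{\eta}^{2}(\overline{\gs})-c_{7}\rho^{2}$, so the spurious $\rho^{2}$ terms are absorbed by $-c_{7}\rho^{2}$ (this is where $c_{7}$ enters); taking finally the infimum over $(m_{0},\overline{\gs})$ and writing part of $\gh^{2}(\gs,\overline{\gs})$ as $\gh^{2}(\gs,\overline{\gS})$ — which only sharpens the bound, since $\gh(\gs,\overline{\gS})\le\gh(\gs,\overline{\gs})$ — one reaches exactly $\overline{r}^{2}$, the coefficients $3c_{7}/c_{8}$, $2/c_{8}$ and $c_{9}=c_{8}^{-1}[(2c_{6})\vee\beta^{-1}]$ (and the additive $2.4$) coming out of this bookkeeping. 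I expect the main obstacle to be this middle step: controlling $\bsZ(\gX,\cdot,\cdot)$ uniformly while its second argument ranges over the \emph{entire} union $\gS=\bigcup_{m}\gS_{m}$ of possibly infinite ``size'', and organising the double union bound — over models and over shells — so that the residual terms are of exactly the order~\eref{eq-pen} can absorb; this is why the penalty must enter the very definition of $\overline{\gPsi}$, and closing the numerology with the universal constants of~\eref{eq-constantes} is the delicate part.
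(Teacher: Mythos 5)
Your plan captures the essential architecture of the paper's proof: bound the ratio defining $\overline{\gpi}_{\gX}(\sB^{\overline{\gS}}(\gs,r)^{c})$ by keeping only the fixed reference model $m_0$ and a small ball around $\overline\gs$ in the denominator, use the penalty lower bound~\eref{eq-pen} to swallow model complexity and the union-bound price, exploit the one-sided inequality $\overline{\gPsi}(\gX,\gt)\ge\gPsi(\gX,\gt,\overline\gs)-\pen(m_0)$ for the numerator, and feed the definition~\eref{eq-eta} of $\overline\eta$ into the denominator. All of that is in the paper. But the very step you flag at the end as ``the main obstacle'' — controlling $\bsZ(\gX,\cdot,\cdot)$ while the second argument ranges over the whole union $\gS=\bigcup_{m}\gS_{m}$ — is left unresolved in your write-up, and it is precisely the step the paper dispatches by a device you do not use: it applies the already-proved auxiliary Theorem~\ref{main} \emph{pairwise} to the two-model space $\overline\gS_{m,m'}=\overline\gS_m\cup\overline\gS_{m'}$ (endowed with the $\sigma$-algebra $\sbS_{m,m'}$ and with $\gS_{m,m'}=\gS_m\cup\gS_{m'}$), at deviation level $\xi_{m,m'}=\xi+L_m+L_{m'}$. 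This yields an event $\Omega_{m,m'}$ with $\P_{\gs}(\Omega_{m,m'}^c)\le e^{-(\xi+L_m+L_{m'})}$, and since $\sum_m e^{-L_m}=1$, $\P_{\gs}\big[\bigcup_{m,m'}\Omega_{m,m'}^c\big]\le e^{-\xi}\big(\sum_m e^{-L_m}\big)^2=e^{-\xi}$ — the fully symmetric double union bound you never actually set up.

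Your version instead indexes the union bound by a single model $m$ with deviation $\xi+L_m$, with the second argument ranging only over $\overline\gS_m\cup\overline\gS_{m_0}$; by antisymmetry of $\bsZ$ this does happen to cover every pair $(\gt,\gt')$ you need, and the probability budget also sums to $e^{-\xi}$, so the idea is salvageable. But you have not shown how your bound on $\ab{\bsZ}$, once turned into one-sided bounds on $\gPsi(\gX,\gu,\gu')$ via~\eref{eq-esp-psi}--\eref{eq-var-psi}, feeds into the supremum $\sup_{m'}\sup_{\gt'\in\gS_{m'}}[\gPsi(\gX,\gt,\gt')-\pen(m')]$ defining $\overline\gPsi$ for $\gt\in\overline\gS_{m_0}$: it is the appearance of $c_6(L_m+L_{m'})$ in the Theorem~\ref{main}-type bound, together with $\pen(m')\ge c_5\overline\eps_{m'}^2+(c_6+\beta^{-1})L_{m'}$, that makes that supremum finite while leaving the residual $-\beta^{-1}L_{m'}$ (paper's~\eref{eq-00} and~\eref{eq-thm22}), and the same $-\beta^{-1}L_m$ residual is what makes the sum over $m$ of numerator integrals converge to the final bound~\eref{eq-02}. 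You gesture at this (``the residual factor $e^{-\beta L_m}$ keeping the series convergent'') but never derive it, and your shell-sum ``geometric'' bookkeeping is also too loose — $\sum_{j\ge0}e^{-j}>1$, so the probability is not $1-e^{-\xi}$ unless one shifts the exponent the way the proof of Theorem~\ref{main} does with the constant $\alpha=2.4$. In short: right architecture, but the one component you identify as the crux — the organised double union bound over models that~\eref{eq-Lm}--\eref{eq-pen} is designed to pay for — is not actually built, and the paper's clean move (invoke Theorem~\ref{main} as a black box on each $\overline\gS_m\cup\overline\gS_{m'}$) is the thing to supply.
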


This result about the concentration of the $\rho$-posterior distribution is analogue to that one can obtain from a frequentist point of view by using a model selection method. Up to possible extra logarithmic terms, the $\rho$-posterior concentrates at a rate which achieves the best compromise between the approximation and complexity terms among the family of models.

\subsection{Model selection among exponential families\label{ex-MS}}
In this section we pretend that the observations $X_{1},\ldots,X_{n}$ are i.i.d. but  keep in mind that the $X_{i}$ might not be equidistributed so that their true joint density $\gs$ might not be of the form $(s,\ldots,s)$. 

Hereafter, $\ell_{2}(\N)$ denotes the Hilbert space of all square-summable sequences $\gtheta=(\theta_{j})_{j\ge 0}$ of real numbers that we endow with the Hilbert norm $|\cdot|$ and the inner product $\<\cdot,\cdot\>$. Let $\cM=\N$, $M$ be some positive number and for $m\in\cM$, let $\bs{\Theta}_{m}'$ be the subset of $\ell_{2}(\N)$ of these sequences $\gtheta=(\theta_{j})_{j\ge 0}$ such that $\theta_{j}\in [-M,M]$ for $0\le j\le m$ and $\theta_{j}=0$ for all $j>m$. 

For a sequence $\gT=(T_{j})_{j\ge 0}$ of linearily independent measurable real-valued functions on $\sX$ with $T_{0}\equiv1$ and $m\in\cM$, we define the density model $\overline{S}_{m}$ as the exponential family 
\[
\overline{S}_{m}=\ac{t_{\gtheta}=\exp\cro{\<\gtheta,\gT\>-A(\gtheta)},\ \gtheta\in\bs{\Theta}_{m}',\ \theta_{m}\neq 0},
\]
where $A$ denotes the mapping from $\bs{\Theta}=\bigcup_{m\in\cM}\bs{\Theta}_{m}'$ to $\R$ defined by 
\[
A(\gtheta)=\log\int_{\sX}\exp\cro{\<\gtheta,\gT(x)\>}d\mu(x),
\]
and $\mu$ is a finite measure on $\sX$.
Note that, whatever $\gtheta\in\bs{\Theta}$, $x\mapsto \<\gtheta,\gT(x)\>$ is well-defined on $\sX$ since only a finite number of coefficients of $\gtheta$ are non-zero.  

For all $m\in\cM$, we endow $\overline S_{m}$ with the Borel $\sigma$-algebra $\sS_{m}$ and the prior $\pi_{m}$ which is the image of the uniform distribution on $\bs{\Theta}_{m}'$ (identified with $[-M,M]^{m+1}$) by the mapping $\gtheta\mapsto t_{\gtheta}$ on $\bs{\Theta}_{m}'$. Throughout this section, we consider the family of (disjunct) measured spaces $(\overline S_{m},\sS_{m},\pi_{m})$ with $m\in\cM$ together with the choice $L_{m}=(m+1)\log 2$ for all $m\in\cM$, so that $\sum_{m\in\cM}e^{-L_{m}}=1$. Then $\overline S=\bigcup_{m\in\cM}\overline S_{m}$, $\pi$ is given by~\eref{def-prior-sm} and for all $m\in\cM$,
\[
\pen(m)=c_{5}n\overline \eps_{m}^{2}+(c_{6}+\beta^{-1})L_{m}\quad\mbox{with}\quad
\overline \eps_{m}=\frac{11c_{0}}{4}\sqrt{\frac{\overline{c}_{n}(m+3)}{n}}\log^{3/2}(en)
\]
and $c_{0}, c_{5}, c_{6}$, $\overline{c}_{n}$ defined in (\ref{eq-constantes}). In such a situation we derive the following result. 
%
\begin{prop}\label{prop-selexp}
Assume that, for all $m\in\cM$, the restriction $A_{m}$ of $A$ to $\bs{\Theta}_{m}'$ 
is convex and twice differentiable on the interior of $\bs{\Theta}_{m}'$ with a Hessian whose eigenvalues lie in $(0,\sigma_{m}]$ for some $\sigma_{m}>0$. Whatever the density $\gs$ of $\gX$, for all $\xi>0$, with $\P_{\gs}$-probability at least $1-e^{-\xi}$, 
\[
\overline{\gpi}_{\gX}\left(\sB^{\overline{S}}(\gs,r)\right)\ge 1-e^{-\xi'}\ \ \mbox{for all $\xi'>0$ and all $r\in[\overline r_{n},1]$}
\]
with
\begin{align*}
\overline r_{n}^{2}\le &\ C(\beta)\inf_{m\ge 1}\cro{h^{2}(\gs,\overline{S}_{m})+ \frac{m+1}{n}\!\cro{\log^{3}(en)+\log\pa{1+n\sigma_{m}^{2}M^{2}}}\str{4}}\\
& +c_{9}\frac{\xi+\xi'+2.4}{n}
\end{align*}
and some constant $C(\beta)>0$ depending on $\beta$ only.
\end{prop}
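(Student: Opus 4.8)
\emph{Overview and Assumption~\ref{H-mod}.} The plan is to view this as an instance of Theorem~\ref{thm-main2} for the hierarchy $(\overline S_m,\sS_m,\pi_m)_{m\in\cM}$ with the prescribed penalty, and then to bound the three quantities $\pen(m)$, $\overline\eps_m^2$ and $\overline\eta^2$ that enter the resulting value of $\overline r^2$. First I would check Assumption~\ref{H-mod}. The measurability part follows from the arguments behind case~(a) of Proposition~\ref{mes-cond-modele} applied to each exponential model $\overline S_m$: the bound $\nabla^2 A_m\preceq\sigma_m^2 I$ on the interior of the compact convex set $\gTheta_m'$ makes $\nabla A_m$ Lipschitz, so $A_m$ extends continuously to $\gTheta_m'$ while positive-definiteness gives strict convexity; hence $(\overline S_m,h)$ is a genuine metric space on which $t\mapsto t(x)$ is continuous, and the positivity of the exponential densities handles the $\gPsi$-convergence requirement along any countable dense parameter sequence. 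For \eref{eq-eps}, absorbing $-A(\gtheta)$ into the coefficient of $T_0\equiv1$ shows every element of $\overline S_m\cup\overline S_{m'}$ has the form $\exp\big(\sum_{j=0}^{m\vee m'}\theta_j T_j\big)$, so by Proposition~\ref{prop-exp0} the class $\sF$ attached to $\overline S_m\cup\overline S_{m'}$ is weak VC-major of dimension at most $(m\vee m')+2\le m+m'+6$, whence Proposition~\ref{prop-VC} gives $\eps^{\overline\gS_m\cup\overline\gS_{m'}}(\gs)\le\frac{11c_0}{4}\sqrt{\overline{c}_n(m+m'+6)}\log^{3/2}(en)=\sqrt{\overline\eps_m^2+\overline\eps_{m'}^2}$. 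Finally \eref{eq-pen} holds with equality by the definition of $\pen$, and $\sum_{m\in\cM}e^{-L_m}=\sum_{m\ge0}2^{-(m+1)}=1$.

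\emph{Reduction to an $\overline\eta$ estimate.} Theorem~\ref{thm-main2} then applies and gives the stated concentration with $\overline r^2=\inf_{m\in\cM}\inf_{\overline\gs\in\overline\gS_m}[\cdots]+c_9(\xi+\xi'+2.4)$, so, dropping the fixed non-positive term $-\gh^2(\gs,\overline\gS)$, it is enough to bound for each $m\ge1$ the quantity $\frac{3c_7}{c_8}\gh^2(\gs,\overline\gS_m)+\frac2{c_8}\big(2\pen(m)-L_m/\beta\big)+\frac2{c_8}\sup_{\overline\gs\in\overline\gS_m}\overline\eta^2(\overline\gs)$ (choosing, for the first summand, $\overline\gs$ to nearly realise $\gh(\gs,\overline\gS_m)$). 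Using $\overline{c}_n<2$, $m+3\le3(m+1)$, $L_m=(m+1)\log2$ and $\log^3(en)\ge1$ one gets $\frac2{c_8}\big(2\pen(m)-L_m/\beta\big)\le C(\beta)(m+1)\log^3(en)$. Everything then rests on the uniform bound $\overline\eta^2(\overline\gs)\le C(\beta)(m+1)\big(1+\log(1+n\sigma_m^2M^2)\big)$ for $\overline\gs\in\overline\gS_m$.

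\emph{The crux: bounding $\overline\eta^2$.} Here the affinity of the exponential family $\overline S_m$ is explicit, $\rho(t_\gtheta,t_{\gtheta'})=\exp\big[A_m(\tfrac{\gtheta+\gtheta'}2)-\tfrac{A_m(\gtheta)+A_m(\gtheta')}2\big]$, so convexity together with the Hessian bound yields $h^2(t_\gtheta,t_{\gtheta'})\le\tfrac{A_m(\gtheta)+A_m(\gtheta')}2-A_m(\tfrac{\gtheta+\gtheta'}2)\le\tfrac{\sigma_m^2}8\ab{\gtheta-\gtheta'}^2$. Consequently, identifying $\gTheta_m'$ with the box $[-M,M]^{m+1}$, the ball $\sB^{\overline\gS}(\gt_\gtheta,r)$ contains the image of $\{\gtheta':\ab{\gtheta'-\gtheta}\le\delta\}\cap[-M,M]^{m+1}$ with $\delta=2\sqrt2\,r/(\sigma_m\sqrt n)$; since $[-M,M]^{m+1}$ is an $\ell^\infty$-ball, for $\delta\le M\sqrt{m+1}$ this set contains the inscribed $\ell^\infty$-ball of radius $\delta/\sqrt{m+1}$ about $\gtheta$, of normalised volume at least $\big(\delta/(2M\sqrt{m+1})\big)^{m+1}$, so that $\log\big(1/\pi_m(\sB^{\overline\gS}(\gt_\gtheta,r))\big)\le(m+1)\log\big(M\sigma_m\sqrt{n(m+1)}/(\sqrt2\,r)\big)$ for $r\le\sigma_m M\sqrt{n(m+1)}/(2\sqrt2)$. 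Feeding this into \eref{eq-eta} and minimising over $r$ — using $r^2=(m+1)/(4\beta c_7)$ when this lies in the admissible range, which happens precisely when $n\sigma_m^2M^2\ge2/(\beta c_7)$, and otherwise taking $r$ at the upper end of that range, where the Euclidean ball already swallows the whole box — produces $\overline\eta^2(\gt_\gtheta)\le\frac{m+1}{4\beta}\big(1+\ab{\log(2\beta c_7)}+\log(1+n\sigma_m^2M^2)\big)$, the bound required. The point of passing to the inscribed $\ell^\infty$-ball is to keep the volume of the unit Euclidean ball in $\R^{m+1}$, hence a spurious $\log(m+1)$ factor, out of the estimate.

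\emph{Conclusion and the main difficulty.} Inserting the three estimates into Theorem~\ref{thm-main2} and absorbing all numerical constants and the $\beta$-dependence into a single $C(\beta)$ gives $\overline r^2\le C(\beta)\inf_{m\ge1}\big[\gh^2(\gs,\overline\gS_m)+(m+1)\big(\log^3(en)+\log(1+n\sigma_m^2M^2)\big)\big]+c_9(\xi+\xi'+2.4)$; restricting the infimum from $\cM$ to $\{m\ge1\}$ only enlarges it, so discarding the degenerate model $\overline S_0$ is harmless. I expect the $\overline\eta^2$ estimate to be the main obstacle: besides the $\ell^\infty$-versus-$\ell^2$ geometric point, one must handle carefully the regime where $\pi_m$ is concentrated at a scale below the $\gh$-diameter of $\overline\gS_m$ (so that the naive optimal radius is inadmissible), and one must dispose of the boundary of $\gTheta_m'$, where $A_m$ is only assumed convex — a routine but slightly delicate point settled by the Lipschitz extension of $\nabla A_m$ noted above, or by the observation that only interior parameters, the boundary being $\lambda$-negligible, enter the mass lower bound.
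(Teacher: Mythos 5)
Your proposal is correct and follows essentially the same route as the paper: both verify Assumption~\ref{H-mod} via Proposition~\ref{mes-cond-modele} applied to the exponential families (the paper does this a shade more carefully by passing through the closures $\overline S_m'$ with parameter set $\gTheta_m'$ and then restricting to $\overline S_m$, a detail you should make explicit since $\gTheta_m = \gTheta_m'\setminus\gTheta_{m-1}'$ is not itself compact), both bound $\eps^{\overline\gS_m\cup\overline\gS_{m'}}$ through the weak VC-major dimension of $\exp(\sum_{j\le m\vee m'}\theta_j T_j)$ via Propositions~\ref{prop-exp0} and~\ref{prop-VC}, and both bound $\overline\eta^2$ by passing from the Euclidean to the inscribed $\ell^\infty$ cube to avoid a spurious $\log(m+1)$, then optimizing $r$ in~\eref{eq-eta}. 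The only difference is cosmetic — the paper uses the bound $h^2\le\frac{(m+1)\sigma_m^2}{8}|\gtheta-\gtheta'|_\infty^2$ directly whereas you go through the Euclidean ball first — and your choice $r^2=(m+1)/(4\beta c_7)$ versus the paper's $r^2=(m+1)/(8\beta c_7)$ merely reflects the paper's own harmless factor-of-two inconsistency between~\eref{eq-eta} and its use in the proof.
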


\section*{Acknowledgements}
The first author has received funding from the European Union's Horizon 2020 research and innovation programme under grant agreement N\textsuperscript{o} 811017.

The second author was supported by the grant ANR-17-CE40-0001-01 of the French National Research Agency ANR (project BASICS) and by Laboratoire J.A. Dieudonn\'e (Nice).

\bibliographystyle{apalike}

\newpage
\part*{Supplement to ``Robust Bayes-Like Estimation: Rho-Bayes estimation"}

\section{Measurability issues}

\subsection{Proof of Proposition~\ref{prop-mes}}
Since $(x,t)\mapsto t(x)$ is measurable, the same holds for $(x,t,t')\mapsto\left(\st t(x),t'(x)\right)$ hence for $(x,t,t')\mapsto \psi\left(\sqrt{t'(x)/t(x)}\right)$ and $(x,t,t')\mapsto \gPsi(\gx,t,t')$ as a sum of measurable functions; the first assertion follows since $S$ is countable. The second assertion immediately follows from the first. As to the last one, the conclusion follows from the fact that the mapping $t\mapsto \rho(s,t)=\int_{\sX}\sqrt{s(x)t(x)}d\mu(x)$ is a measurable function of $t\in\overline S$ for all $s\in\sL$.
%
%

\subsection{Piecewise exponential families}
The following proposition provides sufficient conditions for some exponential families of interest to satisfy our Assumption~\ref{hypo-mes}.

\begin{prop}\label{mes-cond-modele}
Let $(T_{j})_{1\le j\le J}$ be  $J\ge 1$ measurable functions on $(\sX,\sA)$ and $\bs{\Theta}\in\R^{J}$. If the density model $\overline S=\{t_{\gtheta}, \gtheta\in \bs{\Theta}\}$ on $(\sX,\sA,\mu)$ has one of the two following forms a) or b) below, it satisfies Assumption~\ref{hypo-mes} with $\sS$ the Borel $\sigma$-algebra on $(\overline S,h)$ and $S=\{t_{\gtheta}, \gtheta\in \bs{\Theta}'\}$ where $\bs{\Theta}'$ denotes any dense and countable subset of $\bs{\Theta}$.\vspace{-1mm}
\begin{lista}
\item $\bs{\Theta}$ is a convex and compact subset of $\R^{J}$ and $t_{\gtheta}$ is of the form~\eref{eq-expt} where $A$ is strictly convex and continuous on $\bs{\Theta}$;
\item $\sJ=(I_{i})_{i=1,\ldots,k}$ is a partition of $\sX$ into $k\ge2$ measurable subsets of positive measure, $\bs{\Theta}=\prod_{i=1}^{k}\Theta_{i}$ where each $\Theta_{i}$ is a convex and compact subset of $\R^{J}$ and $t_{\gtheta}$ has the form
\[
t_{\gtheta}(x)=\sum_{i=1}^{k}\exp\cro{\sum_{j=1}^{J}\theta_{i,j}T_{j}(x)-A(\gtheta)}\1_{I_{i}}(x)\quad\text{for all }x\in\sX,
\]
where $\gtheta=(\theta_{i,j})_{\!\!\tiny{\begin{array}{l}i=1,\ldots,k\\j=1,\ldots,J\end{array}}}$ and
\[
A(\gtheta)=\log \left(\sum_{i=1}^{k}\int_{I_{i}}\exp\cro{\sum_{j=1}^{J}\theta_{i,j}T_{j}(x)}d\mu(x)\right)
\]
is continuous and strictly  convex on $\bs{\Theta}$.
\end{lista}
\end{prop}
It is well-known that in (\ref{eq-expt}), $A$ is strictly convex and continuous on $\bs{\Theta}$ when $\bs{\Theta}$ is a subset of the interior of the set
\[
\ac{\gtheta=(\theta_{1},\ldots,\theta_{J})\in\R^{J},\int_{\sX}\exp\cro{\sum_{j=1}^{J}\theta_{j}T_{j}(x)}d\mu(x)<+\infty}
\]
and $T_{1},\ldots,T_{J}$ are almost surely affinely independent, which means that, for all $(\lambda_{1},\ldots,\lambda_{J})\in\R^{J}\setminus\{\bf 0\}$, $\sum_{j=1}^{J}\lambda_{j}T_{j}$ is not constant a.s. If $\overline S$ is not of the form $a)$ or $b)$ but is a subset of a density model of one of these forms then $\overline S$ also satisfies Assumption~\ref{hypo-mes} with the Borel $\sigma$-algebra  $\sS$  on $(\overline S,h)$. 

\begin{proof}
In case $a)$, for all $\gtheta,\gtheta'\in \bs{\Theta}$, the Hellinger affinity writes
\[
\rho(t_{\gtheta},t_{\gtheta'})=\exp\left[-\frac{A(\gtheta)+A(\gtheta')}{2}\right]\int_{\sX}\exp\cro{-\sum_{j=1}^{J}\pa{\theta_{j}+\theta_{j}'\over 2}T_{j}(x)}d\mu(x)
\]
and, since $\bs{\Theta}$ is convex, $(\gtheta+\gtheta')/2\in \bs{\Theta}$ so that
\begin{equation}
\rho(t_{\gtheta},t_{\gtheta'})=\exp\cro{A\pa{\gtheta+\gtheta'\over 2}-\pa{A(\gtheta)+A(\gtheta')\over 2}}.
\label{eq-rhott'}
\end{equation}
In case $b)$, the same argument shows that
\begin{align*}
\rho(t_{\gtheta},t_{\gtheta'})&=\exp\left[-\frac{A(\gtheta)+A(\gtheta')}{2}\right]\\
&\quad \times \sum_{i=1}^{k}\int_{I_{i}}\exp\cro{-\sum_{j=1}^{J}\pa{\theta_{i,j}+\theta_{i,j}'\over 2}T_{j}(x)}d\mu(x)
\end{align*}
and (\ref{eq-rhott'}) also holds. The function $A$ being strictly convex, $h(t_{\gtheta},t_{\gtheta'})=0$ implies that $\gtheta=\gtheta'$, hence $t_{\gtheta}=t_{\gtheta'}$ and $(\overline S,h)$ is therefore a metric space. Moreover, the mapping $\gtheta\mapsto t_{\gtheta}$ is continuous from $(\bs{\Theta},\ab{\cdot})$ to the metric space $(\overline S,h)$ and since it is also one-to-one from the compact set $\bs{\Theta}$ onto $\overline S$, it is an homeomorphism from $(\bs{\Theta},\ab{\cdot})$ to $(\overline S,h)$. 
If we denote by $g$ the continuous function from $\overline{S}$ to $\bs{\Theta}$ defined by $g(t)=\gtheta$ if $t=t_{\gtheta}$, the function $(x,t_{\gtheta})\mapsto(x,\gtheta)$ is measurable and since the function $(x,t_{\gtheta})\mapsto t_{\theta}(x)$ is the composition of the maps $(x,t_{\gtheta})\mapsto(x,\gtheta)$ and $(x,\gtheta)\mapsto t_{\gtheta}(x)$, it is enough, in order to check Assumption~\ref{hypo-mes}-$\ref{Hi}$, to show that $(x,\gtheta)\mapsto t_{\gtheta}(x)$ is measurable.
The functions $x\mapsto T_{j}(x)$ are mesurable and the function $\gtheta\mapsto A(\gtheta)$ as well by continuity. Since products and sums of measurable functions are measurable, $(x,\gtheta)\mapsto\sum_{j=1}^{J}\theta_{i,j}T_{j}(x)-A(\gtheta)$ is measurable
hence $(x,\gtheta)\mapsto t_{\gtheta}(x)$ as well.

Since $(\bs{\Theta},\ab{\cdot})$ is compact, it is separable and a countable and dense subset $\bs{\Theta}'$ of $\bs{\Theta}$ does exist. We take $S=\{t_{\gtheta}, \gtheta\in \bs{\Theta}'\}$. Since $A$ is continuous, the mapping $\gtheta\mapsto t_{\gtheta}(x)$ is continuous on $\bs{\Theta}$ for all $x\in\sX$. Moreover, since $t'(x)>0$ for all $t'\in S$ and $x\in\sX$, if $t_{k}(x)\rightarrow t(x)$, $\psi\left(\sqrt{t'(x)/t_{k}(x)}\right)\rightarrow\psi\left(\sqrt{t'(x)/t(x)}\right)$ so that Assumption~\ref{hypo-mes}-$\ref{Hii}$ is satisfied.
\end{proof}
%

\section{Proof of Theorem~\ref{thm-main}}
The proof relies on suitable bounds for the numerator and denominator of the density
\begin{equation}
p_{\!\gX(\omega)}(t)={d\gpi_{\!\gX(\omega)}\over d\gpi}(t)={\exp\left[-\beta \gPsi(\gX(\omega),t)\right]\over 
\dps{\int_{\overline{S}}}\exp\left[-\beta \gPsi(\gX(\omega),t')\right]d\gpi(t')},
\label{eq-th1-a}
\end{equation}
which themselves derive from bounds on $\gPsi(\gX(\omega),t)$. To get such bounds, we shall use the three following results to be proven in Section~\ref{Sect-12}.

\begin{prop}\label{prop-ass1}
Under Assumption~\ref{hypo-mes}, the sequence $(t_{k})_{k\ge0}$ satisfies
\begin{equation}
\lim_{k\to +\infty}h(t,t_{k})=0,\qquad
\lim_{k\to +\infty}\gPsi(\gx,t_{k},t')=\gPsi(\gx,t,t').
\label{eq-ass1}
\end{equation}
\end{prop}
%

\begin{thm}\label{main}
Under Assumption~\ref{hypo-mes}, whatever the density $\gs$ of $\gX$ and $\xi>0$, there exists a measurable subset $\Omega_{\xi}$ of $\Omega$ the $\P_{\gs}$-probability of which is at least $1-e^{-\xi}$ and such that for all $\omega\in\Omega_{\xi}$, $t\mapsto \gPsi(\gX(\omega),t,t')$ is measurable on $(\overline S,\sS)$ for all $t'\in S$ and, for all $t\in\overline{S}$,
\begin{equation}\label{eq-main}
\frac{1}{n}\gPsi(\gX(\omega),t,t')\le c_{7} h^{2}(\gs,t) -c_{8}h^{2}(\gs,t') + c_{5}\left(\eps_{n}^{\overline{S}}(\gs)\right)^2+c_{6}\frac{\xi+2.4}{n},
\end{equation}
where $\eps_{n}^{\overline{S}}(\gs)$ is given by~\eref{def-en} and the constants $c_j$ have been defined in (\ref{eq-constantes}).
\end{thm}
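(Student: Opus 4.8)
\textbf{Proof strategy for Theorem~\ref{main}.}

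The plan is to control the empirical process $\bsZ(\gX,\gt,\gt')=\gPsi(\gX,\gt,\gt')-\E_{\gs}[\gPsi(\gX,\gt,\gt')]$ uniformly over $\overline{\gS}\times\gS$ by a combination of a deterministic bound on the expectation $\E_{\gs}[\gPsi(\gX,\gt,\gt')]$ and a concentration inequality (a version of Talagrand's inequality, which is why we need the countable set $\gS$). First I would establish the deterministic estimate: since $\psi(x)=(x-1)/(x+1)$ and $\psi(\sqrt{t'_i/t_i})$ relates to the Hellinger affinity, one shows that $\E_{\gs}[\gPsi(\gX,\gt,\gt')]\le a_1\gh^2(\gs,\gt)-a_2\gh^2(\gs,\gt')$ for suitable positive constants; this is the analogue of the expectation bound for $\rho$-estimators already worked out in Baraud and Birg\'e~\citeyearpar{Baraud:2016kq}, and it uses the elementary inequalities \eref{eq-2ab} together with the facts that $\psi(1/x)=-\psi(x)$ and $|\psi(x)|\le 1$. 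The key algebraic point is that $\psi(\sqrt{v/u})$ can be written so that $\sqrt{u}\,\psi(\sqrt{v/u})$ is, up to lower order terms, $\tfrac12(\sqrt v-\sqrt u)$, giving the linearization that produces squared Hellinger distances after taking expectations against $\gs$.

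Next I would handle the stochastic fluctuation. The quantity $\gw^{\overline{\gS}}(\gs,y)$ from~\eref{def-en} and the definition of $\eps^{\overline{\gS}}(\gs)$ as the largest $y\ge1$ with $\gw^{\overline{\gS}}(\gs,y)>6c_0^{-1}y^2$ are precisely designed so that, on balls $\sB^{\gS}(\gs,y)$ of radius $y\ge\eps^{\overline{\gS}}(\gs)$, one has the expectation control $\E_{\gs}[\sup|\bsZ|]\le 6c_0^{-1}y^2$. I would then peel the model $\overline{\gS}\times\gS$ into Hellinger shells around $\gs$ of geometrically increasing radii, apply the bounded differences / Talagrand concentration inequality on each shell (each increment $\psi$ being bounded by $1$ and Lipschitz with constant $2$, so the variance proxy is controlled by $\gh^2$), and take a union bound absorbing the resulting $\log$ factors into the constants $c_5,c_6$. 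Combining the deterministic bound, the $\eps^{\overline{\gS}}(\gs)$-control of the expectation inside each shell, and the deviation term $\sqrt{\xi+\cdots}$ promoted to its square via $2ab\le\alpha a^2+\alpha^{-1}b^2$, one arrives at an inequality of the form $\gPsi(\gX,\gt,\gt')\le c_7\gh^2(\gs,\gt)-c_8\gh^2(\gs,\gt')+c_5(\eps^{\overline{\gS}}(\gs))^2+c_6(\xi+2.4)$ simultaneously for all $(\gt,\gt')$, on an event $\Omega_\xi$ of probability at least $1-e^{-\xi}$. The measurability of $\gt\mapsto\gPsi(\gX(\omega),\gt,\gt')$ on $\Omega_\xi$ is already guaranteed by Proposition~\ref{mes-psixt} under Assumption~\ref{hypo-mes}.

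The main obstacle, as usual in these chaining-plus-concentration arguments, is the peeling step: one must choose the shell radii and the slicing of the deviation parameter carefully so that the $\log$ of the number of shells does not inflate the bound, and so that the concentration inequality is applied at the right scale (radius comparable to $\eps^{\overline{\gS}}(\gs)$ when the shell is small, and the self-bounding of the variance by $\gh^2$ kicking in when it is large). Getting the explicit numerical constants $c_5,\dots,c_8$ in~\eref{eq-constantes} to come out right — and in particular keeping $c_8>0$ so that the $-c_8\gh^2(\gs,\gt')$ term genuinely penalizes bad $\gt'$ — requires a bookkeeping that is essentially the same as in the $\rho$-estimation papers, so I would import those computations rather than redo them, citing BBS and Baraud and Birg\'e~\citeyearpar{Baraud:2016kq} for the process-level estimates and only adapting the final combination to the present notation.
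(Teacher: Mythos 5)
Your strategy matches the paper's proof essentially step for step: reduction to the countable set $\gS$ and extension back to $\overline{\gS}$ by the density/limit clause in Assumption~\ref{hypo-mes}; the deterministic expectation bound $\E_{\gs}[\gPsi(\gX,\gt,\gt')]\le a_{0}\gh^{2}(\gs,\gt)-a_{1}\gh^{2}(\gs,\gt')$ imported from Baraud and Birg\'e~\citeyearpar{Baraud:2016kq} (the paper's \eref{eq-esp-psi}); a Hellinger peeling in geometrically growing radii $r_j^2=q^j[\eps^2(\gs)+(\tau/2)(\xi+\alpha)]$; the Talagrand-type inequality (Proposition~45 of BBS) applied on each pair of shells with the variance proxy taken from \eref{eq-var-psi}; the expectation bounded via the definition of $\eps^{\overline{\gS}}(\gs)$; and a union bound whose exponents $x_{j,j'}$ grow with the shell index so that the total failure probability is below $e^{-\xi}$, after which the constants are tuned. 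The only small inaccuracy worth noting is your remark about ``promoting $\sqrt{\xi}$ to its square'': the version of Talagrand's inequality used (\eref{massart3}) already has a free parameter $c$ built in and delivers a deviation term that is \emph{linear} in $x$, so no additional $2ab\le\alpha a^{2}+\alpha^{-1}b^{2}$ step is needed at that stage; the bound $c_6(\xi+2.4)$ comes out linearly in $\xi$ directly. This does not affect the correctness of the overall plan.
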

%
\begin{prop}
\label{Prop-int}
Let $r$, $a$ and $b$ satisfy $0<(\sqrt{n}r)^{-2}\le a\le b$. If $J\in\N\et$ is such that $4^{J-1}\ge b/a$, $r_{0}=2^{-J}r$ and $t\in\overline{S}$ is such that
\begin{equation}
\gpi\left(\sB^{\overline{S}}(t,2r')\right)\le \exp\left[(3a/8)nr'^{2}\right]\gpi\left(\sB^{\overline{S}}(t,r')\right)\quad\mbox{for all }r'\ge r_{0},
\label{Eq-Gamma}
\end{equation}
then
\begin{equation}
{\dps{\int_{\overline{S}\setminus \sB^{\overline{S}}(t,r)}}\exp\left[-anh^{2}(t,t')\right]d\gpi(t')\over
\dps{\int_{\overline{S}}}\exp\left[-bnh^{2}(t,t')\right]d\gpi(t')}\le\exp\left[{1-anr^{2}\over4}\right].
\label{eq-RapInt}
\end{equation}
\end{prop}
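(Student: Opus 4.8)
The plan is a peeling (slicing into Hellinger shells) argument: bound the denominator from below and the numerator from above separately, gluing the two through the $\gpi$-mass of the small ball $\sB^{\overline{\gS}}(\gt,r_{0})$. For the denominator, restricting the integral to $\sB^{\overline{\gS}}(\gt,r_{0})$, on which $\gh^{2}(\gt,\gt')\le r_{0}^{2}$, gives
\[
\int_{\overline{\gS}}\exp\left[-b\gh^{2}(\gt,\gt')\right]d\gpi(\gt')\ge\exp\left[-br_{0}^{2}\right]\gpi\left(\sB^{\overline{\gS}}(\gt,r_{0})\right),
\]
and the hypothesis $4^{J-1}\ge b/a$ yields $4^{J}\ge 4b/a$, hence $br_{0}^{2}=b4^{-J}r^{2}\le (a/4)r^{2}$, so the denominator is at least $\exp[-(a/4)r^{2}]\gpi(\sB^{\overline{\gS}}(\gt,r_{0}))$. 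Two remarks make this meaningful: one may assume $r<\sqrt{n}$, since otherwise $\overline{\gS}\setminus\sB^{\overline{\gS}}(\gt,r)=\varnothing$ and \eref{eq-RapInt} is trivial; and $\gpi(\sB^{\overline{\gS}}(\gt,r_{0}))>0$, for otherwise iterating \eref{Eq-Gamma} would force $\gpi(\sB^{\overline{\gS}}(\gt,r'))=0$ for every $r'\ge r_{0}$, contradicting $\gpi(\overline{\gS})=1$ (all balls in sight being measurable by Proposition~\ref{prop-mes}).

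Next I would peel the numerator. Writing $\overline{\gS}\setminus\sB^{\overline{\gS}}(\gt,r)=\bigcup_{k\ge 0}A_{k}$ with $A_{k}=\sB^{\overline{\gS}}(\gt,2^{k+1}r)\setminus\sB^{\overline{\gS}}(\gt,2^{k}r)$, one has $\gh^{2}(\gt,\gt')\ge 4^{k}r^{2}$ on $A_{k}$ while $\gpi(A_{k})\le\gpi(\sB^{\overline{\gS}}(\gt,2^{k+1}r))$. Iterating the doubling inequality \eref{Eq-Gamma} applied successively with $r'=2^{k}r,2^{k-1}r,\dots,2^{-J}r=r_{0}$ — each admissible since $r=2^{J}r_{0}\ge r_{0}$ — and using $\sum_{j=-J}^{k}4^{j}<4^{k+1}/3$, I obtain
\[
\gpi\left(\sB^{\overline{\gS}}(\gt,2^{k+1}r)\right)\le\exp\left[\frac{3a}{8}r^{2}\sum_{j=-J}^{k}4^{j}\right]\gpi\left(\sB^{\overline{\gS}}(\gt,r_{0})\right)\le\exp\left[\frac{a}{2}4^{k}r^{2}\right]\gpi\left(\sB^{\overline{\gS}}(\gt,r_{0})\right),
\]
hence $\int_{A_{k}}\exp[-a\gh^{2}(\gt,\cdot)]\,d\gpi\le\exp[-(a/2)4^{k}r^{2}]\,\gpi(\sB^{\overline{\gS}}(\gt,r_{0}))$.

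Summing over $k$ and using $ar^{2}\ge 1$, which gives $(a/2)4^{k}r^{2}\ge(a/2)r^{2}+(4^{k}-1)/2$, I would bound
\[
\sum_{k\ge 0}\exp\left[-\tfrac{a}{2}4^{k}r^{2}\right]\le\exp\left[-\tfrac{a}{2}r^{2}\right]\sum_{k\ge 0}\exp\left[-\tfrac{4^{k}-1}{2}\right]<1.224\,\exp\left[-\tfrac{a}{2}r^{2}\right],
\]
the last series being dominated by $1+e^{-3/2}+e^{-15/2}+\cdots$. Dividing the resulting numerator bound by the denominator bound, the factor $\gpi(\sB^{\overline{\gS}}(\gt,r_{0}))$ cancels and the ratio is at most $1.224\exp[-(a/4)r^{2}]\le\exp[(1-ar^{2})/4]$ since $1.224<e^{1/4}$, which is exactly \eref{eq-RapInt}. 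The argument is routine once organized this way; the only step needing genuine care is the constant bookkeeping — checking that carrying the iteration of \eref{Eq-Gamma} all the way down to $r_{0}$ inflates the mass of the $k$-th shell by only $\exp[(a/2)4^{k}r^{2}]$, which is absorbed by the Gaussian-type weight $\exp[-a4^{k}r^{2}]$, and that after summation the leftover constant together with the $\exp[(a/4)r^{2}]$ loss in the denominator stays below $e^{1/4}$. This is exactly where the choice $4^{J-1}\ge b/a$ and the normalization $r^{-2}\le a$ are used.
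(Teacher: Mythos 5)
Your proof is correct and follows essentially the same route as the paper's: peel $\overline{\gS}\setminus\sB^{\overline{\gS}}(\gt,r)$ into dyadic Hellinger shells, iterate the doubling condition \eref{Eq-Gamma} down to radius $r_{0}$ to bound each shell's $\gpi$-mass by a constant times $\gpi(\sB^{\overline{\gS}}(\gt,r_{0}))$, sum the resulting super-geometric series using $ar^{2}\ge 1$, and lower-bound the denominator by restricting the integral to $\sB^{\overline{\gS}}(\gt,r_{0})$. Your two extra observations (that one may assume $r<\sqrt{n}$, and that $\gpi(\sB^{\overline{\gS}}(\gt,r_{0}))>0$ so the common factor cancels safely) are sound and harmless refinements that the paper leaves implicit.
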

Let us now fix $\xi>0$, $\omega$ in the set $\Omega_{\xi}$ provided by Theorem~\ref{main} and set $\gx$ for $\gX(\omega)$, $\eps(\gs)$ for $\eps_{n}^{\overline{S}}(\gs)$ and $\eta(t)$ for $\eta_{n}^{\overline{S},\gpi}(t)$ in order to keep our formulae as simple as possible. For $u\in\overline{S}$ and $u'\in S$, (\ref{eq-main}) can be written
\begin{equation}
\gPsi(\gx,u,u')\le n\left[c_{7}h^{2}(\gs,u) -c_{8}h^{2}(\gs,u') +c_{5}\eps^{2}(\gs)\right]+c_{6}(\xi+2.4).
\label{eq-imp}
\end{equation}

To bound the denominator of $p_{\gx}(t)$ from below we apply~\eref{eq-imp} with $u=t'\in \overline{S}$ and, since $h(\gs,u')\ge h(\gs,S)=h(\gs,\overline{S})$, derive that, whatever $t'\in\overline{S}$,
\[
\frac{\gPsi(\gx,t')}{n}=\sup_{u'\in S}\frac{\gPsi(\gx,t',u')}{n}\le c_{7} h^{2}(\gs,t')-c_{8}h^{2}(\gs,\overline{S})+ c_{5}\eps^{2}(\gs)+c_{6}\frac{\xi+2.4}{n}.
\]
Therefore, 
\begin{align*}
\int_{\overline{S}}\exp\left[-\beta \gPsi(\gx,t')\right]d\gpi(t')\ge&\,\exp\left[\beta\left(c_{8}nh^{2}(\gs,\overline{S})- c_{5}n\eps^{2}(\gs)-c_{6}(\xi+2.4)\right)\right]\\&\,\times\int_{\overline{S}}\exp\left[-\beta c_{7}nh^{2}(\gs,t')\right]d\gpi(t').
\end{align*}

Let us now bound the numerator of $p_{\gx}(t)$ from above for $t\in\overline{S}$. Assumption~\ref{hypo-mes} implies (\ref{eq-ass1}), hence, for $u\in S$ there exists a sequence $(t_{k})_{k\ge0}\in S$ such that
\[
\gPsi(\gx,t_{k},u)\CV{k\to +\infty}\gPsi(\gx,t,u)\qquad\mbox{and}\qquad h^{2}(t,t_{k})\CV{k\to +\infty}0.
\]
Applying \eref{eq-imp} with $u'=t_{k}$ leads to
\begin{align*}
-\gPsi(\gx,t_{k},u)&=\gPsi(\gx,u,t_{k})\\&\le n\left[c_{7}h^{2}(\gs,u)-c_{8}h^{2}(\gs,t_{k})+ c_{5}\eps^{2}(\gs)\right]+c_{6}(\xi+2.4)
\end{align*}
and letting $k$ tend to infinity shows that, for $t\in\overline{S}$ and $u\in S$,
\[
-\gPsi(\gx,t,u)\le n\left[c_{7}h^{2}(\gs,u)-c_{8}h^{2}(\gs,t)+ c_{5}\eps^{2}(\gs)\right]+c_{6}(\xi+2.4).
\]
Consequently,
\begin{align*}
-\gPsi(\gx,t)&=\inf_{u\in S}\left[-\gPsi(\gx,t,u)\right]\\&\le c_{7}
\left[\inf_{u\in S}nh^{2}(\gs,u)\right]-c_{8}nh^{2}(\gs,t)+ c_{5}n\eps^{2}(\gs)+c_{6}(\xi+2.4)
\end{align*}
and, since $\inf_{u\in S}h^{2}(\gs,u)=h^{2}(\gs,S)=h^{2}(\gs,\overline{S})$, for all $t\in\overline{S}$,
\begin{align*}
\lefteqn{\exp\left[-\beta \gPsi(\gx,t)\right]}\hspace{12mm}\\
&\le\exp\left[\beta\left(c_{5}n\eps^{2}(\gs)+c_{6}(\xi+2.4)+c_{7}nh^{2}(\gs,\overline{S})\right)-\beta c_{8}nh^{2}(\gs,t)\right].
\end{align*}
Putting the bounds for the numerator and denominator of (\ref{eq-th1-a}) together, we derive that, for all $t\in\overline{S}$,
\begin{align}
p_{\gx}(t)\le&\,\exp\left[\beta\left((c_{7}-c_{8})nh^{2}(\gs,\overline{S})+2c_{5}n\eps^{2}(\gs)+2c_{6}(\xi+2.4)\right)\right]\nonumber\\
&\,\times{\exp\left[-\beta c_{8}nh^{2}(\gs,t)\right]\over \dps{\int_{\overline{S}}}\exp\left[-\beta c_{7}nh^{2}(\gs,t')\right]d\gpi(t')}\nonumber\\
\le&\,\exp\left[\beta\left((c_{7}-c_{8})nh^{2}(\gs,u)+2c_{5}n\eps^{2}(\gs)+2c_{6}(\xi+2.4)\right)\right]\label{eq-bpx}
\\&\,\times{\exp\left[-\beta c_{8}nh^{2}(\gs,t)\right]\over\dps{\int_{\overline{S}}}\exp\left[-\beta c_{7}nh^{2}(\gs,t')\right]d\gpi(t')}\qquad\mbox{for all }u\in\overline{S}\nonumber,
\end{align}
since $c_{7}-c_{8}>0$.
It follows from the inequalities $\gh(t,u)\le\gh(t,\gs)+\gh(\gs,u)$, $\gh(\gs,t')\le\gh(\gs,u)+\gh(u,t')$ and (\ref{eq-2ab}) that, whatever the positive numbers $a,b$, for all $t,t',u \in\overline{S}$, $h^{2}(\gs,t)\ge a(1+a)^{-1}h^{2}(t,u)-a\gh^{2}(\gs,u)$ and $h^{2}(\gs,t')\le(1+b)h^{2}(u,t')+(1+b^{-1})h^{2}(\gs,u)$.
Together with \eref{eq-bpx} this leads to
\begin{align*}
p_{\gx}(t)\le&\,\exp\left[\beta\left\{\left[c_{7}\left(2+b^{-1}\right)+c_{8}(a-1)\right]nh^{2}(\gs,u)+2c_{5}n\eps^{2}(\gs)\right\}\right]\\&\,\times\exp[2\beta c_{6}(\xi+2.4)]
{\exp\left[-\beta[(ac_{8})/(a+1)]nh^{2}(t,u)\right]\over\dps{\int_{\overline{S}}}\exp\left[-\beta c_{7}(1+b)nh^{2}(u,t')\right]d\gpi(t')},
\end{align*}
for all $t,u\in\overline{S}$. Choosing $a=11$ so that $c_{8}(a-1)<c_{7}$ and $b=1$, then setting 
$c_{7}'=(1+b) c_{7}=8.02$ and $c_{8}'=1/3<a(a+1)^{-1}c_{8}$, we get
\begin{align}
p_{\gx}(t)\le&\,\exp\left[\beta\left(4c_{7}nh^{2}(\gs,u)+2c_{5}n\eps^{2}(\gs)+2c_{6}(\xi+2.4)\right)\right]\nonumber\\
&\,\times{\exp\left[-\beta c_{8}'nh^{2}(t,u)\right]\over\dps{\int_{\overline{S}}}\exp\left[-\beta c_{7}'nh^{2}(u,t')\right]d\gpi(t')}\qquad\mbox{for all }t,u\in\overline{S}.
\label{main-eq1}
\end{align}
%
For $u\in\overline{S}$ and $r>0$, let $\sB_{1}=\sB^{\overline{S}}(u,r)$ and $\sB_{2}= \sB^{\overline{S}}(\gs,r+h(\gs,u))$ so that $\sB_{1}\subset\sB_{2}$, hence
%
\begin{align*}
\gpi_{\gx}\left(\overline{S}\setminus \sB_{2}\right)
\le&\,\gpi_{\gx}\left(\overline{S}\setminus \sB_{1}\right)=\int_{\overline{S}\setminus\sB_{1}}p_{\gx}(t)\,d\gpi(t)\\
\le &\,\exp\left[\beta\left(4c_{7}nh^{2}(\gs,u)+2c_{5}n\eps^{2}(\gs)+2c_{6}(\xi+2.4)\right)\right]\\
&\,\times\int_{\overline{S}\setminus\sB_{1}}{\exp\left[-\beta c_{8}'nh^{2}(t,u)\right]\over \dps{\int_{\overline{S}}}\exp\left[-\beta c_{7}'h^{2}(t',u)\right]d\gpi(t')}\,d\gpi(t)\\= &\,\exp\left[\beta\left(4c_{7}nh^{2}(\gs,u)+2c_{5}n\eps^{2}(\gs)+2c_{6}(\xi+2.4)\right)\right]\\
&\,\times\frac{\dps{\int_{\overline{S}\setminus\sB_{1}}}\exp\left[-\beta c_{8}'nh^{2}(t',u)\right]d\gpi(t')}{\dps{\int_{\overline{S}}}\exp\left[-\beta c_{7}'nh^{2}(t',u)\right]d\gpi(t')}.
\end{align*}
We may now apply Proposition~\ref{Prop-int} to the last fraction with $a=\beta c_{8}'<b=\beta c_{7}'$, so that $b/a=c'_{7}/c'_{8}<25$, $J=4$ and
\begin{equation}
\sqrt{n}r\ge\left[2^{J}\sqrt{n}\eta(u)\right]\bigvee(\beta c_{8}')^{-1/2}=\left[16\sqrt{n}\eta(u)\right]\vee a^{-1/2}.
\label{eq-rr0}
\end{equation}
Since $\gamma= \beta/8=3\beta c_{8}'/8=3a/8$ and $r_{0}=2^{-J}r\ge\eta(u)$, (\ref{Eq-Gamma}) holds and all the conditions needed for applying Proposition~\ref{Prop-int} are satisfied, leading to
\[
{\dps{\int_{\overline{S}\setminus\sB_{1}}}\exp\left[-\beta c_{8}'nh^{2}(t',u)\right]d\gpi(t')\over \dps{\int_{\overline{S}}}\exp\left[-\beta c_{7}'nh^{2}(t',u)\right]d\gpi(t')}\le\exp\left[\frac{1-\beta c_{8}'nr^{2}}{4}\right]=\exp\left[\frac{1-\beta c_{8}'nr^{2}}{4}\right].
\]
Since $c_{8}'=1/3$, it follows that
\begin{align}
\lefteqn{\gpi_{\gx}\left(\overline{S}\setminus \sB_{2}\right)}\hspace{10mm}\nonumber\\
&\le \exp\left[\beta\left(4c_{7}nh^{2}(\gs,u)+2c_{5}n\eps^{2}(\gs)+2c_{6}(\xi+2.4)+{1\over {4\beta}}-{nr^{2}\over 12}\right)\right]
\label{eq-pix}
\end{align}
provided that $r$ satisfies (\ref{eq-rr0}). Let $r_{0}(u)=\left[16\eta(u)\right]\vee r'_{0}(u)$ with 
\[
r_{0}'(u)=\sqrt{12}\left[2\sqrt{c_{7}}h(\gs,u)+\sqrt{2c_{5}}\eps(\gs)+\sqrt{2c_{6}\frac{\xi+2.4}{n}+\frac{1+4\xi'}{4\beta n}}\right]>\sqrt{\frac{3}{\beta n}}.
\]
Then, if $r=r_{0}(u)$, (\ref{eq-rr0}) holds and the left-hand side of (\ref{eq-pix}) is not larger than $e^{-\xi'}$. It follows that $\gpi_{\gx}\left(\overline{S}\setminus \sB^{\overline{S}}(\gs,r')\right)\le e^{-\xi'}$ provided that $r'\ge h(\gs,u)+r_{0}(u)$. Since, 
\[
h(\gs,u)+r_{0}(u)< c_{1}h(\gs,u)+c_{2}\eta(u)+c_{3}\eps(\gs)+c_{4}n^{-1/2}\sqrt{\xi+\xi'+2.61}
\]
with constants $c_{j}$ given by (\ref{eq-constantes}) so that $c_{4}>2(c_{8}')^{-1/2}\max\{\sqrt{2c_{6}};
\beta^{-1/2}\}$, it is actually enough that $r$ be not smaller than $\overline r_{n}$ given by~\eref{def-rb}, as claimed.

\section{Proof of Theorem~\ref{thm-main2}}
In order to simplify the presentation of the proof, we shall occasionally consider the penalty function as a function on $\overline{S}$ writing $\pen(t)$ instead of $\pen(m)$ when $t\in\overline{S}_{m}$, which implies that
\begin{equation}
\overline \gPsi(\gX,t)=\sup_{t'\in S}\left[\gPsi(\gX,t,t')-\pen(t')\right].
\label{eq-psibar}
\end{equation}

For $m,m'\in\cM$, we consider the set  $\overline S_{m,m'}=\overline{S}_{m}\cup \overline{S}_{m'}$ that we endow with the $\sigma$-algebra 
\[
\sS_{m,m'}=\left\{A\subset \overline S_{m,m'}, A\cap \overline S_{i}\in\sS_{i}\ \mbox{for all}\ i\in\{m,m'\}\right\}.
\]
For all $z\in\R_{+}$ and $i\in\{m,m'\}$, the set
\[
\left\{\left.(x,t)\in\sX\times \overline{S}_{m,m'}\,\right|\, t(x)\le z\right\}\cap\overline S_{i}
=\left\{\left.(x,t)\in \sX\times\overline{S}_{i}\,\right|\, t(x)\le z\right\}
\]
belongs to $\in\sA\otimes\sS_{i}$ by Assumption~\ref{H-mod}-$\ref{H'i}$, hence the function $(x,t)\mapsto t(x)$ is measurable from $(\overline S_{m,m'},\sS_{m,m'})$ to $\R_{+}$. Besides, Assumption~\ref{H-mod}-$\ref{H'ii}$ implies that $\overline S_{m,m'}$ satisfies Assumption~\ref{hypo-mes}-$\ref{Hii}$ with $S_{m,m'}=S_{m}\cup S_{m'}$. Consequently, $(\overline S_{m,m'},\sS_{m,m'})$ satisfies Assumption~$\ref{hypo-mes}$. We may therefore apply Theorem~\ref{main} to this model with $\xi_{m,m'}=L_{m}+L_{m'}+\xi$ in place of $\xi$ and deduce that there exists a measurable subset $\Omega_{m,m'}$ of $\Omega$ with
\begin{equation}
\P_{\gs}[\Omega_{m,m'}]\ge1-e^{-\xi_{m,m'}}=1-e^{-(L_{m}+L_{m'}+\xi)}
\label{eq-omm'}
\end{equation}
such that the function $t\mapsto \gPsi(\gX(\omega), t,t')$ is measurable from $(\overline S_{m,m'},\sS_{m,m'})$ to $\R_{+}$ for all $\omega\in \Omega_{m,m'}$ and $t'\in S_{m,m'}$, hence, by the definition of $\sS_{m,m'}$, the restriction of the function $t\mapsto \gPsi(\gX(\omega), t,t')$ to $\overline S_{m}$ is measurable from $(\overline S_{m},\sS_{m})$ to $\R_{+}$. Moreover, by \eref{eq-eps},  (\ref{eq-main}) and \eref{eq-pen}, for all $\omega\in \Omega_{m,m'}$, $u\in \overline{S}_{m}$ and $u'\in S_{m'}$,
\begin{align*}
\lefteqn{\gPsi(\gX(\omega),u,u')}\hspace{5mm}\\
&\le c_{7}n h^{2}(\gs,u) -c_{8}nh^{2}(\gs,u') + c_{5}n\left[\overline \eps_{m}^{2}+\overline \eps_{m'}^{2}\right]+c_{6}\left[L_{m}+L_{m'}+\xi+2.4\right]\\ &\le c_{7} nh^{2}(\gs,u) -c_{8}nh^{2}(\gs,u')+ c_{6}(\xi+2.4)-\frac{L_{m}+L_{m'}}{\beta}\\&\hspace{5mm}
+\pen(u)+\pen(u')
\end{align*}
and this bound holds simultaneously for all $m,m'\in \cM$, $u\in \overline{S}_{m}$, $u'\in S_{m'}$ and $\omega\in\Omega_{\xi}=\bigcap_{m,m'\in\cM}\Omega_{m,m'}$. Since $\gPsi(\gX(\omega),u,u')=-\gPsi(\gX(\omega),u',u)$, we finally conclude that, for all $\omega\in\Omega_{\xi}$, 
$u\in \overline{S}_{m}$ and $u'\in S_{m'}$,
\begin{align}
\gPsi(\gX(\omega),u,u')-\pen(u')=&-\gPsi(\gX(\omega),u',u)-\pen(u')\nonumber\\
\le&\;c_{7} nh^{2}(\gs,u) -c_{8}nh^{2}(\gs,u')+c_{6}(\xi+2.4)
\label{eq-00}
\\&\;+\pen(m)-\beta^{-1}(L_{m}+L_{m'}).\nonumber
\end{align}
Let us assume from now on that $\omega\in\Omega_{\xi}$ and fix some $\overline m\in\cM$ and some $\overline s\in\overline{S}_{\overline m}$. For all $t\in\overline{S}_{\overline{m}}$ we deduce from~\eref{eq-psibar} and \eref{eq-00}, since $L_{m'}\ge0$, that
\begin{align}
\lefteqn{\overline \gPsi(\gX,t)+\pen(\overline m)}\hspace{17mm}\nonumber\\
=&\; \sup_{t'\in S}\left[\gPsi(\gX,t,t')-\pen(t')\right]+\pen(\overline m)\nonumber\\
\le&\; c_{7} nh^{2}(\gs,t) -c_{8}nh^{2}(\gs,S)+c_{6}(\xi+2.4)+2\pen(\overline m)-\beta^{-1}L_{\overline{m}}\nonumber
\\ \le&\; 2c_{7}n\cro{h^{2}(\gs,\overline s)+h^{2}(\overline \gs,t)} -c_{8}nh^{2}(\gs,\overline{S})+2\pen(\overline m)-\beta^{-1}L_{\overline{m}}\label{eq-thm22}\\
&\;+c_{6}(\xi+2.4).\nonumber 
\end{align}
Let now $t\in\overline{S}_{m}$. It follows from Assumption~\ref{H-mod} that one can find a sequence $(\overline{s}_{k})_{k\ge0}$ in $S_{\overline{m}}$ converging to $\overline s$ and for each $k\in\N$ a sequence $(t_{k,j})_{j\ge0}$ in $S_{m}$ converging to $t$ such that $\lim_{j\to +\infty}\gPsi(\gx,t_{k,j},\overline{s}_{k})=\gPsi(\gx,t,\overline{s}_{k})$ for all $\gx\in\sX^{n}$. Then (\ref{eq-00}) implies that, for all $k\in\N$,
\begin{align*}
\lefteqn{-\overline \gPsi(\gX,t)-\pen(m)}\hspace{10mm}\\
=&\;\inf_{t'\in S}\!\left[\pen(t')-\gPsi(\gX,t,t')\right]-\pen(m)\\\le&\;\pen(\overline s_{k})-\gPsi(\gX,t,\overline s_{k})-\pen(m)\\\le&\;\pen(\overline m)+\lim_{j\to +\infty}\left[-\gPsi(\gX,t_{k,j},\overline{s}_{k})-\pen(t_{k,j})\right]\\
\le&\;\pen(\overline m)+c_{7} h^{2}(\gs,\overline s_{k})-\lim_{j\to +\infty}c_{8}nh^{2}(\gs,t_{k,j})+c_{6}(\xi+2.4)+\pen(\overline m)\\
&\;-\beta^{-1}L_{m}\\
=&\;c_{7} nh^{2}(\gs,\overline s_{k}) -c_{8}nh^{2}(\gs,t)+c_{6}(\xi+2.4)+2\pen(\overline m)-\beta^{-1}L_{m}.
\end{align*}
Letting $k$ tend to infinity, we conclude that
\begin{align*}
\lefteqn{-\left[\overline \gPsi(\gX,t)+\pen(m)\right]}\hspace{15mm}\\
&\le c_{7} nh^{2}(\gs,\overline s) -c_{8}nh^{2}(\gs,t)+c_{6}(\xi+2.4)+2\pen(\overline m)-\beta^{-1}L_{m}.
\end{align*}
Given $r>0$ and $t\in \overline{S}_{m}$ such that $\sqrt{n}h(\gs,t)\ge r$, we finally derive that
\[
-\!\left[\overline \gPsi(\gX,t)+\pen(m)\right]\le c_{7} nh^{2}(\gs,\overline s) -c_{8}nr^{2}+c_{6}(\xi+2.4)+2\pen(\overline m)-\beta^{-1}L_{m}.
\]
Therefore, for all $m\in\cM$ and $t\in \overline{S}_{m}$ such that $\sqrt{n}h(\gs,t)\ge r$,
\begin{align}
\lefteqn{\int_{\overline{S}_{m}\setminus \sB^{\overline{S}_{m}}(\gs,r)}\exp\cro{-\beta\left(\overline \gPsi(\gX,t)+\pen(m)\right)}d\pi_{m}(t)}\hspace{10mm}\nonumber\\
&\le \exp\left[\beta\left(c_{7} nh^{2}(\gs,\overline s)+2\pen(\overline m)-\beta^{-1}
L_{m}+c_{6}(\xi+2.4)-c_{8}nr^{2}\right)\right],
\label{eq-02}
\end{align} 
this integral being well-defined with $\sB^{\overline{S}_{m}}(\gs,r)\in\sbS_{m}$ by Proposition~\ref{prop-mes}.

Let us now define the function $I$ on $\overline S$ as follows: for $m\in\cM$ and $t\in\overline S_{m}$, let
\[
I(t)=\int_{\overline{S}_{m}}\exp\cro{-2\beta \pa{c_{7} nh^{2}(t,t')+\pen(m)}}d\pi_{m}(t').
\]
It follows from the definition~\eref{eq-eta} of $\overline \eta_{n}^{2}(t)$ that, whatever $\varepsilon>0$, one can find $r_{\varepsilon}>0$ such that
\[
2n\beta\overline \eta_{n}^{2}(t)\ge2n\beta c_{7}r_{\varepsilon}^{2}-\log\left(\gpi_{m}\left(\sB^{\overline S_{m}}(t,r_{\varepsilon})\right)\right)-\varepsilon,
\]
or, equivalently,
\[
\pi_{m}\left(\sB^{\overline S_{m}}(t,r_{\varepsilon})\right)\ge\exp\left[2n\beta\left(c_{7}r_{\varepsilon}^{2}-\overline \eta_{n}^{2}(t)\right)-\varepsilon\right].
\]
It then follows that
\begin{align*}
I(t)&\ge\int_{\sB^{\overline S_{m}}(t,r_{\varepsilon})}\exp\cro{-2\beta \pa{c_{7} nh^{2}(t,t')+\pen(m)}}d\pi_{m}(t')\\
&\ge\exp\left[-2\beta\pa{\pen(t)+c_{7}nr_{\varepsilon}^{2}}\right]\gpi_{m}\left(\sB^{\overline S_{m}}(t,r_{\varepsilon})\right)\\&\ge
\exp\left[-2\beta\pa{\pen(t)+c_{7}nr_{\varepsilon}^{2}}+2n\beta\left(c_{7}r_{\varepsilon}^{2}-\overline \eta_{n}^{2}(t)\right)-\varepsilon\right]
\end{align*}
and, since $\varepsilon$ is arbitrary, 
\begin{equation}
I(t)\ge\exp\left[-2\beta \pa{\pen(t)+ n\overline \eta_{n}^{2}(t)}\right]\quad\mbox{for all }t\in\overline{S}.
\label{eq-It}
\end{equation}
As a consequence of~\eref{eq-thm22} and~\eref{eq-It}, 
\begin{align}
\lefteqn{\log \int_{\overline{S}_{\overline m}}\exp\cro{-\beta\left(\overline \gPsi(\gX,t)+\pen(\overline{m})\right)}d\pi_{\overline m}(t)}\hspace{13mm}\nonumber \\
\ge&\; \beta\left[c_{8}nh^{2}(\gs,\overline{S})-2c_{7}nh^{2}(\gs,\overline s)-c_{6}(\xi+2.4)+\beta^{-1}L_{\overline{m}}\right]\nonumber\\ 
&\;+\log \int_{\overline{S}_{\overline m}}\exp\cro{-2\beta \left(c_{7}nh^{2}(\overline s,t)+\pen(\overline{m})\right)}d\pi_{\overline m}(t)\nonumber\\
=&\; L_{\overline{m}}+\beta\left[c_{8}nh^{2}(\gs,\overline{S})-2c_{7}nh^{2}(\gs,\overline s)-c_{6}(\xi+2.4)\right]+\log I(\overline s)\nonumber\\
\ge&\;\beta\left[c_{8}nh^{2}(\gs,\overline{S})-2c_{7}nh^{2}(\gs,\overline s)-2\left(\pen(\overline s)+n\overline \eta_{n}^{2}(\overline s)\right)-c_{6}(\xi+2.4)\right]\label{eq-03}\\&
\:+ L_{\overline{m}}.\nonumber
\end{align}
Combining~\eref{eq-02} and~\eref{eq-03} and using (\ref{eq-Lm}), we derive from (\ref{def-prior-sm}) that
\begin{align*}
\lefteqn{\overline{\gpi}_{\!\gX}\left(\overline{S}\setminus \sB^{\overline{S}}(\gs,r)\right)}\hspace{12mm}\\
=\:&
{\dps{\Delta\sum_{m\in\cM}\int_{\overline{S}_{m}\setminus \sB^{\overline{S}_{m}}(\gs,r)}}\exp\cro{-\beta\left(\overline \gPsi(\gX,t)+\pen(m)\right)}d\gpi_{m}(t)\over\dps{\Delta\sum_{m'\in\cM}\int_{\overline{S}_{m'}}}\exp\cro{-\beta\left(\overline \gPsi(\gX,t)+\pen(m')\right)}d\pi_{m'}(t)}
\\ \le\:&\frac{\dps{\sum_{m\in\cM}{\int_{\overline{S}_{m}\setminus \sB^{\overline{S}_{m}}(\gs,r)}}\exp\cro{-\beta\left(\overline \gPsi(\gX,t)+\pen(m)\right)}d\pi_{m}(t)}}{\dps{\int_{\overline{S}_{\overline m}}}\exp\cro{-\beta\left(\overline \gPsi(\gX,t)+\pen(\overline{m})\right)}d\pi_{\overline m}(t)}\\
\le\:&\sum_{m\in\cM}e^{-L_{m}}\exp\left[\beta\left(c_{7} nh^{2}(\gs,\overline s)+2\pen(\overline s)+c_{6}(\xi+2.4)-c_{8}nr^{2}\right)\right]\\&\times
\exp\left[-\ L_{\overline m}-\beta\left(c_{8}nh^{2}(\gs,\overline{S})-2c_{7}nh^{2}(\gs,\overline s)-2[\pen(\overline s)+n\overline \eta_{n}^{2}(\overline s)]\right)\right]\\&\times\exp\left[\beta c_{6}(\xi+2.4)\right]\\
\le\:&\exp\left[\beta\left(c_{7} nh^{2}(\gs,\overline s)+2\pen(\overline s)+c_{6}(\xi+2.4)-c_{8}nr^{2}\right)\right]\\
&\times
\exp\left[-L_{\overline m}-\beta\left(c_{8}nh^{2}(\gs,\overline{S})-2c_{7}nh^{2}(\gs,\overline s)-2[\pen(\overline s)+n\overline \eta_{n}^{2}(\overline s)]\right)\right]\\&\times\exp\left[\beta c_{6}(\xi+2.4)\right]=\exp[-\beta H]
\end{align*}
with
\begin{align*}
H=&\:c_{8}n\!\left[h^{2}\!\left(\gs,\overline{S}\right)+r^{2}\right]-3c_{7}nh^{2}(\gs,\overline s)-2\!\cro{2\pen(\overline s)+n\overline \eta_{n}^{2}(\overline s)}\\
&\:-2c_{6}(\xi+2.4)+\beta^{-1}L_{\overline m}.
\end{align*}
Consequently, $\overline{\gpi}_{\!\gX}\left(\overline{S}\setminus \sB^{\overline{S}}(\gs,r)\right)\le e^{-\xi'}$ provided that  $r\ge r_{0}$ with
\begin{align*}
c_{8}nr_{0}^{2}&=3c_{7}nh^{2}(\gs,\overline s)-c_{8}nh^{2}(\gs,\overline{S})+2\cro{2\pen(\overline s)+n\overline \eta_{n}^{2}(\overline s)}\\
&\quad -\beta^{-1}(L_{\overline m}-\xi')+2c_{6}(\xi+2.4).
\end{align*}
The value of $\overline{r}_{n}^{2}$ follows from the fact that $\overline s$ is arbitrary in $\overline{S}$ and by (\ref{eq-omm'}),
\[
\P_{\gs}\left[\Omega_{\xi}^{c}\right]\le
\sum_{m,m'\in\cM}e^{-(L_{m}+L_{m'}+\xi)}=e^{-\xi}\left[\sum_{m\in\cM}e^{-L_{m}}\right]^{2}=e^{-\xi}.
\]

\section{Proofs of Proposition~\ref{prop-ass1}, Theorem \ref{main} and Proposition~\ref{Prop-int}}\label{Sect-12}

\subsection{Proof of Proposition~\ref{prop-ass1}}
The pointwise convergence of $t_{k}$ to $t$ when $k\rightarrow+\infty$ and Scheff\'e's Lemma imply the convergence of $t_{k}$ to $t$ in $\L_{1}(\mu)$-distance, hence in Hellinger distance. As to the limit of $\gPsi(\gx,t_{k},t')$ when $k\rightarrow+\infty$, it follows trivially from~\eref{eq-t-t_k}. 

\subsection{Preliminary results}\label{Sect-ProofPR}
We shall use Proposition~45 of BBS together with the remark following it and extending the result to a countable set $T$. We recall this result for further reference.
%
\begin{prop}\label{talagrand}
Let $T$ be some countable set, $U_{1},\ldots,U_{n}$ be independent centred random vectors with values 
in $\R^{T}$ and $Z=\sup_{t\in T}\left|\sum_{i=1}^{n}U_{i,t}\right|$. If for some positive numbers $b$ and $v$, 
\[
\max_{i=1,\ldots,n}|U_{i,t}|\le b\qquad\mbox{and}\qquad
\sum_{i=1}^{n}\E\left[U^2_{i,t}\right]\le v^{2}\ \quad\mbox{for all }t\in T,
\] 
then, for all positive $c$ and $x$,
\begin{equation}\label{massart3}
\P\left[Z\le(1+c){\mathbb E}(Z)+(8b)^{-1}cv^2+2\left(1+8c^{-1}\right)bx\right]\ge1-e^{-x}.
\end{equation}
\end{prop}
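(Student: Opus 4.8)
The plan is to deduce this from the now-standard Bousquet--Klein--Rio form of Talagrand's concentration inequality for the supremum of an empirical process, after (i) reducing to a finite index set and (ii) rewriting the constants.

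\emph{Reduction to finite $T$.} Enumerate $T=\{t_{1},t_{2},\dots\}$, put $T_{k}=\{t_{1},\dots,t_{k}\}$ and $Z_{k}=\sup_{t\in T_{k}}\left|\sum_{i=1}^{n}U_{i,t}\right|$. Since $\max_{i}|U_{i,t}|\le b$ a.s., $Z\le nb<\infty$ a.s.; $Z_{k}$ is measurable (a finite maximum), $Z_{k}\uparrow Z$ pointwise, hence $\E(Z_{k})\uparrow\E(Z)$ by monotone convergence. The right-hand side of~\eqref{massart3} is a deterministic number; write $B_{k}$ (resp.\ $B$) for the value obtained with $\E(Z_{k})$ (resp.\ $\E(Z)$), so that $B_{k}\le B$. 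Since $Z=\sup_{k}Z_{k}$, $\{Z>B\}=\bigcup_{k}\{Z_{k}>B\}$ is an increasing union, whence
\[
\P(Z>B)=\lim_{k\to\infty}\P(Z_{k}>B)\le\sup_{k}\P(Z_{k}>B_{k}).
\]
It therefore suffices to prove~\eqref{massart3} when $T$ is finite.

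\emph{The finite case.} I would first drop the absolute value by doubling the index set: replacing $T$ by $T\times\{-1,1\}$ and $U_{i,t}$ by $\varepsilon U_{i,t}$, one has $Z=\sup_{(t,\varepsilon)}\sum_{i}\varepsilon U_{i,t}$, with the same bound $b$ on the summands, the same bound $v^{2}$ on $\sum_{i}\E[(\varepsilon U_{i,t})^{2}]$, and the same $\E(Z)$. I would then invoke the Bousquet/Klein--Rio inequality for the supremum of an empirical process built from independent random vectors: with probability at least $1-e^{-x}$,
\[
Z\le\E(Z)+\sqrt{2x\bigl(v^{2}+2b\,\E(Z)\bigr)}+\tfrac{1}{3}bx .
\]
It then remains only to rearrange the right-hand side. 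Using $\sqrt{p+q}\le\sqrt p+\sqrt q$ and the elementary inequality $2pq\le\alpha p^{2}+\alpha^{-1}q^{2}$ of~\eqref{eq-2ab},
\[
\sqrt{2x\bigl(v^{2}+2b\,\E(Z)\bigr)}\le v\sqrt{2x}+2\sqrt{bx\,\E(Z)}\le\Bigl(\tfrac{c}{16b}v^{2}+\tfrac{8b}{c}x\Bigr)+\Bigl(c\,\E(Z)+\tfrac{b}{c}x\Bigr),
\]
the first parenthesis coming from $\alpha=c/(8b)$ with $p=v$, $q=\sqrt{2x}$, the second from $\alpha=c$ with $p=\sqrt{\E(Z)}$, $q=\sqrt{bx}$. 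Collecting terms gives, on the same event, $Z\le(1+c)\E(Z)+\tfrac{c}{16b}v^{2}+bx\bigl(\tfrac{9}{c}+\tfrac13\bigr)$, which implies~\eqref{massart3} since $\tfrac{c}{16b}v^{2}\le\tfrac{c}{8b}v^{2}$ and $\tfrac{9}{c}+\tfrac13\le 2+\tfrac{16}{c}$. Together with the reduction above, this proves the proposition.

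\emph{Main obstacle.} The only genuinely nontrivial ingredient is the Bousquet/Klein--Rio inequality itself --- Talagrand's inequality with sharp constant in the variance term --- whose proof (the entropy method: a modified logarithmic Sobolev inequality followed by a Herbst-type argument on the resulting differential inequality) I would cite rather than reproduce. Everything else, namely the passage from finite to countable $T$ and the bookkeeping with constants, is elementary.
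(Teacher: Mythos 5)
Your overall strategy is correct and is essentially the only reasonable one: reduce to a concentration inequality of Talagrand type and then do arithmetic on the constants. In fact the paper does not prove this statement at all --- it simply cites Proposition~45 of Baraud, Birg\'e and Sart (2017) together with the remark extending it to countable $T$ --- so your derivation is a genuine supplement. The finite-to-countable reduction, the symmetrisation $T\mapsto T\times\{-1,1\}$ to remove the absolute value, and the bookkeeping with~\eqref{eq-2ab} are all fine.

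Two points deserve care. First, since the $U_{i}$ are independent but \emph{not} assumed identically distributed (this is essential for the paper, which handles non-i.i.d.\ observations), you must invoke the Klein--Rio version of the inequality, not Bousquet's, which is stated for i.i.d.\ variables. You do say ``Bousquet/Klein--Rio'' but then write the i.i.d.\ Bousquet form $Z\le\E(Z)+\sqrt{2x(v^{2}+2b\,\E(Z))}+\tfrac13 bx$. Second, and this is the substantive issue, the linear term in the Klein--Rio bound is worse than $\tfrac13 bx$: inverting their Bernstein-type tail estimate $\P(Z\ge\E Z+t)\le\exp\bigl(-t^{2}/(2(v^{2}+2b\E Z)+3bt)\bigr)$ gives $Z\le\E Z+\sqrt{2x(v^{2}+2b\E Z)}+3bx$. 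With that $3bx$, your final inequality requires $\tfrac9c+3\le 2+\tfrac{16}{c}$, i.e.\ $c\le 7$, so your argument does not establish~\eqref{massart3} for \emph{all} $c>0$ as claimed. For the bound to hold for every $c>0$ one needs the linear term constant to be at most $2$. You can tighten your split of $v\sqrt{2x}$ (use $\alpha=4b/c$ instead of $c/(8b)$, giving $\tfrac{c}{8b}v^{2}+\tfrac{4b}{c}x$ and hence $\tfrac5c+k\le 2+\tfrac{16}{c}$), but this still caps at $k\le 2$. So either locate a Talagrand-type inequality for independent non-i.i.d.\ summands with linear term $\le 2bx$, or state explicitly that your derivation covers a bounded range of $c$ (which, incidentally, suffices for the paper's actual use, since the value of $c$ chosen in the proof of Theorem~\ref{main} is $c\approx 3.7\times 10^{-3}$). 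As it stands there is a gap in the claimed range of validity in $c$.
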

We shall also use the following properties of the function $\psi$ that have been established in BB (Proposition~3 with $\psi=\psi_{2}$).
\begin{prop}
Whatever $\gs=(s_{1},\ldots,s_{n})\in\sL^{n}$  and $t,t'\in\sL$, 
\begin{equation}\label{eq-esp-psi}
\frac{1}{n}\sum_{i=1}^{n}\int_{\sX}\psi\pa{\sqrt{t'\over t}}dP_{s_{i}}\le4h^{2}(\gs,t)-0.375h^{2}(\gs,t');
\end{equation}
\begin{equation}\label{eq-var-psi}
\frac{1}{n}\sum_{i=1}^{n}\int_{\sX}\psi^{2}\pa{\sqrt{t'\over t}}dP_{s_{i}}\le3\sqrt{2}\cro{h^{2}(\gs,t)+h^{2}(\gs,t')}.
\end{equation}
\end{prop}
\subsection{Proof of Theorem~\ref{main}}
The mapping $t\mapsto \gPsi(\gX(\omega),t,t')$ is measurable on $(\overline S,\sS)$ for all $t'\in S$ by Proposition~\ref{prop-mes}. By the same argument based on (\ref{eq-ass1}) that we already used for the proof of Theorem~\ref{thm-main}, it is enough to show that (\ref{eq-main}) holds for all $t\in S$ and all $\omega$ belonging to some measurable set $\Omega_{\xi}$ with $\P_{\gs}(\Omega_{\xi})\ge1-e^{-\xi}$, which we shall now do.

In order to do this, we fix $\xi>0$, $\epsilon>0$  and set $\eps(\gs)$ for $\eps_{n}^{\overline{S}}(\gs)$. Let $\tau>1,\alpha>0,c>0,q>1$ be numbers to be chosen later on and set for all $j,j'\in\N\cup\{-1\}$,
\[
r_j^{2}=q^{j+1}\left[\eps^{2}(\gs)+{\tau\over 2n}\left(\xi+\alpha\right)\right],\quad
x_{j,j'}=\frac{r_{j}^{2}+r_{j'}^{2}}{\tau}>\frac{\xi}{n}+{\alpha q\over 2n} \left(q^{j}+q^{j'}\right),
\]
\[
\sB^{	S}_{j}=\sB^{S}(\gs,r_{j})\quad\text{ and }\quad Z_{j,j'}(\gX)=\sup_{(t,t')\in\sB^{\gS}_{j}\times\sB^{\gS}_{j'}}\left|\bsZ(\gX,t,t')\right|.
\]
Since $t$ and $t'$ are measurable functions on $\sX$, $\gx\mapsto  \gPsi(\gx,t,t')$ is measurable on $(\sX^{n},\sA\on)$ for all $t,t'\in S\subset \sL$ and $\bsZ(\gX,t,t')$ as well. Since the set $\sB^{S}_{j}\times \sB^{S}_{j'}\subset S^{2}$ is countable for all $j,j'\in \N\cup\{-1\}$, $Z_{j,j'}(\gX)$ is measurable on $(\Omega,\Xi)$ as the supremum of countably many random variables. Moreover, we may apply Proposition~\ref{talagrand} to this supremum, taking  for $T$ the countable set $\sB^{S}_{j}\times \sB^{S}_{j'}$ and for $(t,t')\in\sB^{S}_{j}\times \sB^{S}_{j'}$,
\begin{equation}\label{def-U}
U_{i,t,t'}=\psi\left(\sqrt{t'\over t}(X_{i})\right)-
\E_{\gs}\left[\psi\left(\sqrt{t'\over t}(X_{i})\right)\right]\quad\mbox{for }i=1,\ldots,n.
\end{equation}
For such a choice, the assumptions of Proposition~\ref{talagrand} are met with $b=2$ (since $\psi$ is bounded by $1$) and $v^{2}=\left(3\sqrt{2}\right)n\left(r_{j}^{2}+r_{j'}^{2}\right)$ (by the definition of the sets $\sB^{S}_{j}$ and~\eref{eq-var-psi}). We derive from~\eref{massart3} with $x=nx_{j,j'}$ that, on a measurable set $\Omega_{j,j',\xi}\subset \Omega$ whose $\P_{\gs}$-probability is at least $1-e^{-nx_{j,j'}}$,
\begin{equation}
Z_{j,j'}(\gX)\le(1+c)\E_{\gs}\left[Z_{j,j'}(\gX)\right]+ \frac{3\sqrt{2}}{16}cn\left(r_{j}^{2}+r_{j'}^{2}\right)+4n\left(1+\frac{8}{c}\right)x_{j,j'}.\label{Eq-pr1}
\end{equation}
For $k\in\{j,j'\}$, $\sB^{S}_{k}\subset \sB^{S}(\gs,r_{j\vee j'})$ and, since $r_{j\vee j'}>\eps(\gs)$, it follows from  the definition of $\eps(\gs)$ that, 
\begin{align*}
\E_{\gs}\left[Z_{j,j'}(\gX)\right]&\le \E_{\gs}\left[\sup_{t,t'\in \sB^{S}_{j}(\gs,r_{j\vee j'})}\left|\bsZ(\gX,t,t')\right|\right]=\gw^{\overline{S}}(\gs,r_{j\vee j'})
\\ &\le 6c_0^{-1}nr_{j\vee j'}^{2}\le 6c_{0}^{-1}n\left(r_{j}^{2}+r_{j'}^{2}\right),
\end{align*}
and (\ref{Eq-pr1}) becomes
\begin{equation}\label{eq-pr2}
\bsZ(\gX,t,t')\le Z_{j,j'}(\gX)\le n\varpi \left(r_{j}^{2}+r_{j'}^{2}\right)\ \ \mbox{for all}\ (t,t')\in \sB^{S}_{j}\times \sB^{S}_{j'}
\end{equation}
with 
\[
\varpi=\varpi(c,c_{0},\tau)=c_{0}^{-1}\left[6(1+c)+{3\sqrt{2}c_{0}c\over16}+{4c_{0}(1+8c^{-1})\over \tau}\right].
\]
Let us now set
\[
B^{S}_{-1}=\sB^{S}_{-1}\qquad\mbox{and}\qquad B^{S}_{j}=\sB^{S}_{j}\setminus\sB^{S}_{j-1}
\quad\mbox{for }j\ge0.
\]
If $j\ge0$ and $t\in B^{S}_{j}$, then $h^{2}(\gs,t)>r_{j-1}^{2}=q^{-1}r_{j}^{2}$ and it follows that
$r_{j}^{2}\le r_{-1}^{2}+ q h^{2}(\gs,t)$ for all $j\ge-1$. Therefore (\ref{eq-pr2}) implies that, on $\Omega_{j,j',\xi}$,
\[
n^{-1}\bsZ(\gX,t,t')\le\varpi\left[2r_{0}^{2}+q\left(h^{2}(\gs,t)+h^{2}(\gs,t')\right)\right]\quad
\mbox{for all }(t,t')\in B^{S}_{j}\times B^{S}_{j'}.
\]
This bound, together with~\eref{eq-esp-psi}, leads for all $(t,t')\in B^{S}_{j}\times B^{S}_{j'}$ to
\begin{align*}
\frac{1}{n}\gPsi(\gX,t,t')&\le \frac{1}{n}\E_{\gs}\left[\gPsi(\gX,t,t')\right] + \varpi \left[2r_{0}^{2}+q\left(h^{2}(\gs,t)+h^{2}(\gs,t')\right)\right]\\
&= \frac{1}{n}\sum_{i=1}^{n}\int_{\sX}\psi\left(\sqrt{t'\over t}\right)dP_{s_{i}}+ \varpi \left[2r_{0}^{2}+qh^{2}(\gs,t)+qh^{2}(\gs,t')\right]\\
&\le (4+\varpi q)h^{2}(\gs,t) -(0.375-\varpi q)h^{2}(\gs,t') + 2\varpi r_{0}^{2}.
\end{align*}
We recall that $c_{0}=10^{3}$ by (\ref{eq-constantes}) and choose $q=5/4$, $\alpha=2.4$, 
\[
c^{-1}=6\pa{1+\frac{c_{0}\sqrt{2}}{32}}\quad\mbox{ and }\quad
\tau=4c_{0}(1+8c^{-1}),
\]
so that
\[
6c+{3\sqrt{2}c_{0}c\over 16}={4c_{0}(1+8c^{-1})\over\tau}=1,\qquad\varpi={1\over 5^{3}}\qquad\mbox{and}\qquad\varpi q={1\over 100}.
\]
It follows that on $\Omega_{j,j',\xi}$, the probability of which is at least $1-e^{-nx_{j,j'}}$, for all $(t,t')\in B^{S}_{j}\times B^{S}_{j'}$
\begin{equation}\label{eq-concl}
\frac{1}{n}\gPsi(\gX,t,t')\le c_{7}h^{2}(\gs,t) -c_{8}h^{2}(\gs,t') + c_{5}\eps^{2}(\gs)+ c_{6}\frac{\xi+2.4}{n},
\end{equation}
with $c_{7}=4.01$, $c_{8}=0.365$, $c_{5}=2\times 5^{-3}$ and $c_{6}=7.10^{4}$ as indicated in (\ref{eq-constantes}), since $\tau\varpi<7.10^{4}$. Since the sets $B^{S}_{j}\times B^{S}_{j'}$ with $j,j'\in\N\cup\{-1\}$ provide a partition of $S^{2}$ and, for all $j,j'\in\N\cup\{-1\}$, $nx_{j,j'}\ge\xi+(\alpha q/2)(q^{j}+q^{j'})$, inequality~\eref{eq-concl} holds for all $(t,t')\in S^{2}$ on the measurable set $\Omega_{\xi}$ with
\[
\Omega_{\xi}^{c}=\bigcup_{j,j'\in \N\cup\{-1\}}\Omega_{j,j',\xi}^{c}\qquad\text{and}\qquad
\P_{\gs}\left[\Omega_{j,j',\xi}^{c}\right]\le e^{-nx_{j,j'}}.
\]
We conclude that $\P_{\gs}\left[\Omega_{\xi}\right]>1-e^{-\xi}$ since the probability of $\Omega_{\xi}^{c}$ is at most
\begin{align*}
\sum_{j\ge -1}\sum_{j'\ge -1}e^{-nx_{j,j'}}\le e^{-\xi}\!\left(\sum_{j\ge 0}e^{-(\alpha/2)q^{j}}\right)^{2}=e^{-\xi}\!\left(\sum_{j\ge 0}e^{-1.2\times(5/4)^{j}}\right)^{2}<e^{-\xi}.
\end{align*}
%

\subsection{Proof of Proposition~\ref{Prop-int}}
It follows from Proposition~\ref{prop-mes} that the two integrals appearing in the left-hand side of~\eref{eq-RapInt} are well-defined. Applying (\ref{Eq-Gamma}) iteratively, we get, for all $k\in\N$, 
\begin{align*}
\gpi\left(\sB^{\overline{S}}(t,2^{k+1}r_{0})\right)&\le\prod_{j=0}^k\exp\left[(3a/8)4^{j}nr^{2}_{0}\right]\gpi\left(\sB^{\overline{S}}(t,r_{0})\right)\\&\le \exp\left[\left(4^{k+1}/3\right)(3a/8)nr^{2}_{0}\right]\gpi\left(\sB^{\overline{S}}(t,r_{0})\right).
\end{align*}
For $k=J+j$ with $j\in\N$, this leads, since $2^{J}r_{0}=r$, to
\[
\gpi\left(\sB^{\overline{S}}(t,2^{j+1}r)\right)\le\exp\left[4^{j}\left(a n r^{2}/2\right)\right]\gpi\left(\sB^{\overline{S}}(t,r_{0})\right).
\]
Therefore
\begin{align*}
\lefteqn{\int_{\overline{S}\setminus \sB^{\overline{S}}(t,r)}\exp\!\left[-an h^{2}(t,t')\right]\!d\gpi(t')}\:\\
&= \sum_{j\ge 0}\int_{\sB^{\overline{S}}(t,2^{j+1}r)\setminus \sB^{\overline{S}}(t,2^{j}r)}
\exp\!\left[-an h^{2}(t,t')\right]\!d\gpi(t')\\
&\le \sum_{j\ge 0}\exp\!\left[-a 4^{j}n r^{2}\right]\gpi\!\left(\sB^{\overline{S}}(t,2^{j+1}r)\right)
\le \sum_{j\ge 0}\exp\!\left[-\frac{4^{j}an r^{2}}{2}\right]\gpi\!\left(\sB^{\overline{S}}(t,r_0)\right)
\end{align*}
and, since $anr^2\ge1$,
\[
\sum_{j\ge 0}\exp\!\left[-\frac{4^{j}anr^{2}}{2}\right]=\exp\!\left[-\frac{anr^{2}}{2}\right]
\sum_{j\ge 0}\exp\!\left[-\frac{4^{j}-1}{2}\right]<e^{1/4}\exp\left[-\frac{anr^{2}}{2}\right].
\]
Besides 
\begin{align*}
\int_{\overline{S}}\exp\left[-bnh^{2}(t,t')\right]d\gpi(t')&\ge\int_{\sB^{\overline{S}}(t,r_0)}\exp\left[-bnh^{2}(t,t')\right]d\gpi(t')\\&\ge\exp\left[-bnr_{0}^{2}\right]
\gpi\left(\sB^{\overline{S}}(t,r_0)\right)\\&=\exp\left[-b4^{-J}nr^{2}\right]\gpi\left(\sB^{\overline{S}}(t,r_0)\right).
\end{align*}
Putting everything together we finally get, since $4^{J-1}\ge b/a$,
\begin{align*}
{\dps{\int_{\overline{S}\setminus \sB^{\overline{S}}(t,r)}}\exp\left[-anh^{2}(t,t')\right]d\gpi(t')\over\dps{\int_{\overline{S}}}\exp\left[-bnh^{2}(t,t')\right]d\gpi(t')}&\le e^{1/4}\exp\left[-nr^{2}\left({a\over2}-4^{-J}b\right)\right]\\
&\le e^{1/4}\exp\left[-{anr^{2}/4}\right].
\end{align*}
%

\section{Proof of Theorem~\ref{thm-B}}
Let us fix $\xi> \log4$ (so that $4e^{-\xi}< 1$), set $\lambda_{n}=(\xi+\log n)^{1/2}$ and introduce in the course of the proof various functions ($n_{j}$, $\kappa_j$, $k_j$, $C_{j}$, $K$, $\Gamma$ and $J_{0}$) of $\xi$, of the parameters involved in our assumptions, namely $A_1$, $A_2=A_{2}(\gvartheta)$, $B$, $\overline{\gamma}$ and $d$ and of the various universal constants $c_{j}$ and $\gamma$ defined in (\ref{eq-constantes}), but not on $n$. For simplicity we shall call them ``constants". Throughout the proof we shall assume that $n\ge n_{1}$, starting with $n_{1}\ge e^{\xi}>4$ (so that $\log n>1$) and increasing $n_{1}$ whenever necessary in the course of this proof.

\subsection{Some consequences of our assumptions}\label{PV-1}
\paragraph{\bf Consequence 1} 
It follows from (\ref{metric-h}) that the Euclidean and Hellinger distances on $\bs{\Theta}$ are equivalent which means that, for all $r>0$ and $\gtheta\in \bs{\Theta}$
\begin{equation}
\cB(\gtheta,A_{3}^{-1}r)\subset\cB_{h}(\gtheta,r)= \{\gtheta'\in \bs{\Theta}\,|\, h(\gtheta,\gtheta')\le r\}\subset \cB(\gtheta,A_{2}^{-1}r),
\label{eq-eqdis}
\end{equation}
and the definition of $\pi$ implies that 
\[
\pi\pa{\sB^{\overline{S}}\left(t_\gtheta,r\right)}=\pi\pa{\left\{\left.t_{\gtheta'}\in \overline{S}\,\right|\,h(t_{\gtheta},t_{\gtheta'})\le r\right\}}=\nu\pa{\cB_{h}(\gtheta,r)}.
\]
Let $k_{1}\in\N\et$ such that $2^{k_{1}}\ge A_{3}A_{2}^{-1}>2^{k_{1}-1}$. It follows from  Assumption~\ref{A-prior} and (\ref{eq-eqdis}) that, for all $\gtheta\in\bs{\Theta}$, $r>0$ and $k\in\N$, hence $k+k_{1}\in\N\et$,
\begin{align*}
\pi\left(\sB^{\overline{S}}\left(t_\gtheta, 2^{k}r\right)\right)&= \nu\pa{\cB_{h}\left(\gtheta,2^{k}r\right)}\le\nu\pa{\cB\pa{\gtheta,2^{k+k_{1}}\left[2^{k_{1}}A_{2}\right]^{-1}r}}\\
&\le \exp\left[B\overline{\gamma}^{k+k_{1}}\right]\nu\pa{\cB\pa{\gtheta,\left[2^{k_{1}}A_{2}\right]^{-1}r}}\\
&\le \exp\left[\left(B\overline \gamma^{k_{1}}\right)\overline{\gamma}^{k}\right]
\nu\pa{\cB\pa{\gtheta,A_{3}^{-1}r}}
\\&\le \exp\left[\left(B\overline \gamma^{k_{1}}\right)\overline{\gamma}^{k}\right]\nu\pa{ \cB_{h}(\gtheta,r)\st}.
\end{align*}
Setting $\kappa_{1}=B\overline \gamma^{k_{1}}\ge1$ we deduce that, whatever $\gtheta\in \bs{\Theta}$ and $r>0$,
\begin{equation}\label{eq-typeta}
\pi\left(\sB^{\overline{S}}\left(t_\gtheta, 2^{k}r\right)\right)\le \exp\left[\kappa_{1}\overline{\gamma}^{k}\right]\pi\left(\sB^{\overline{S}}\left(t_\gtheta,r\right)\right)\quad\text{for all }k\in\N\et.
\end{equation}
%

\paragraph{\bf Consequence 2} 
We now want to apply Theorem~2 of Birg\'e~\citeyearpar{Lucien-Bayes} to the posterior distribution $\pi_{\!\gX}^{L}$ and we therefore have to check Assumptions~1 and 2 of that paper. 
The equivalence of the Euclidean and Hellinger distances and the classical metric properties of the Euclidean space $\R^{d}$ imply that Assumption~1 is satisfied for some constant function $D(x)=\overline{D}$ depending only on $A_{2}$, $A_{3}$ and $d$. As to Assumption~2, it follows from (\ref{eq-typeta}) with $\gamma=\overline{\gamma}$ and $\beta(j)=\kappa_{1}$ for all $j\ge3$ as required. We derive from this Theorem~2 with $\overline{\beta}=\kappa_{1}$, $\kappa=0$ and $c=e^{-\xi}<1/4$ the following result. There exist two constants $n_{0}$ and $k_0$ (with $2^{k_{0}-4}\ge e^{\xi}$) which are positive integers and, for all $n\ge n_{0}$, a set $\Omega(n,\xi)$ of probability $\P_{\gs}[\Omega(n,\xi)]\ge1-e^{-\xi}$, such that, if $\omega\in\Omega(n,\xi)$ and $k\ge k_{0}$,
\begin{equation}
\pi_{\!\gX(\omega)}^{L}\!\left(\sB^{\overline{S}}\left(s,2^{k}e^{-\xi}/\sqrt{n}\right)\right)
\ge1-1.05\exp\left[-4^{k-4}e^{-2\xi}\right].
\label{eq-n0k0}
\end{equation}
Increasing $n_{1}$ if necessary, we shall assume that $n_{1}\ge n_{0}$ and $\log n_{1}\ge4^{k_{0}-4}e^{-2\xi}-\log(1.05)$. For $n\ge n_{1}$ let $k_{n}$ be the largest integer such that $4^{k_{n}-4}e^{-2\xi}-\log(1.05)\le\log n$. Then $k_{n}\ge k_{0}$, $1.05\exp\left[-4^{k_{n}-4}e^{-2\xi}\right]\le n^{-1}$ and $2^{k_{n}}e^{-\xi}<16\sqrt{\log n}<16\lambda_{n}$. Therefore, by (\ref{eq-n0k0}), for $\omega\in\Omega(n,\xi)$,
\[
\pi_{\!\gX(\omega)}^{L}\!\left(\sB^{\overline{S}}\left(s,16\lambda_{n}/\sqrt{n}\right)\right)
\ge\pi_{\!\gX(\omega)}^{L}\!\left(\sB^{\overline{S}}\left(s,2^{k_{n}}e^{-\xi}/\sqrt{n}\right)\right)\ge1-n^{-1},
\]
hence, for $n\ge n_{1}$,
\begin{equation}
\P_{s}\left[\pi_{\!\gX}^{L}\!\left(\sB^{\overline{S}}\left(s,16\lambda_{n}/\sqrt{n}\right)\right)
\ge1-n^{-1}\right]\ge1-e^{-\xi}.
\label{Eq-J1}
\end{equation}
%
\paragraph{\bf Consequence 3} 
The quantities $\eta_{n}^{\overline{S},\pi}(t_{\gtheta})$ and $\eps_{n}^{\overline{S}}(\gs)$ involved in the performance of our $\rho$-posterior distribution as described in Theorem~\ref{thm-main} are controlled as follows (with $\gamma$ given by (\ref{eq-constantes})). 

\begin{prop}\label{prop-eta-eps}
Under Assumptions~\ref{A-MLE} and~\ref{A-prior}, 
\[
\eta_{n}^{\overline{S},\gpi}(t_{\gtheta})\le\kappa_{2}/ \sqrt{n}\mbox{ for all }\gtheta\in \bs{\Theta}\qquad\mbox{and}\qquad \eps_{n}^{\overline{S}}(\gs)\le \kappa_{3}/\sqrt{n}.
\]
\end{prop}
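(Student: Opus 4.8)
The plan is to derive both inequalities from the material already assembled in Section~\ref{PV-1}, using throughout the density-framework dictionary $\overline{\gS}=\{(t,\dots,t):t\in\overline{S}\}$, $\gh^{2}=nh^{2}$, and $\gpi$ the image of $\pi$, so that $\gpi(\sB^{\overline{\gS}}(\gt_{\gtheta},\rho))=\pi(\sB^{\overline{S}}(t_{\gtheta},\rho/\sqrt n))$ for every $\rho\ge0$. The bound on $\eta^{\overline{\gS},\gpi}$ is then immediate: applying \eref{eq-typeta} with $k=1$ and $r=r'/\sqrt n$ yields, for every $\gtheta\in\gTheta$ and $r'>0$,
\[
\gpi(\sB^{\overline{\gS}}(\gt_{\gtheta},2r'))=\pi(\sB^{\overline{S}}(t_{\gtheta},2r'/\sqrt n))\le e^{\kappa_{1}\overline{\gamma}}\,\pi(\sB^{\overline{S}}(t_{\gtheta},r'/\sqrt n))=e^{\kappa_{1}\overline{\gamma}}\,\gpi(\sB^{\overline{\gS}}(\gt_{\gtheta},r')).
\]
As soon as $r'\ge\sqrt{\kappa_{1}\overline{\gamma}/\gamma}$ one has $e^{\kappa_{1}\overline{\gamma}}\le e^{\gamma r'^{2}}$, so the criterion \eref{Eq-eta} is met and $\eta^{\overline{\gS},\gpi}(\gt_{\gtheta})\le\sqrt{\kappa_{1}\overline{\gamma}/\gamma}$ for all $\gtheta\in\gTheta$.

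For $\eps^{\overline{\gS}}(\gs)$ the plan is to establish a bound of the form $\gw^{\overline{\gS}}(\gs,y)\le C_{0}(A_{1}/A_{2})\sqrt d\,y$, valid for every $y>0$ with $C_{0}$ universal, and then to invert \eref{def-en}. Fix $y>0$ and write $t_{\gvartheta}=s$; the set $\sB^{\gS}(\gs,y)$ is non-empty by density of $S$, and by \eref{metric-h} every pair $(t,t')$ with $(t,\dots,t),(t',\dots,t')\in\sB^{\gS}(\gs,y)$ is carried to parameters $\gtheta,\gtheta'$ in the Euclidean ball of radius $y/(A_{2}\sqrt n)$ about $\gvartheta$. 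Writing $f_{\gtheta,\gtheta'}=\psi(\sqrt{t_{\gtheta'}/t_{\gtheta}})$, one has $\norm{f_{\gtheta,\gtheta'}}_{\infty}\le1$, $f_{\gtheta,\gtheta}\equiv0$, and, by Assumption~\ref{A-MLE}-\ref{H4-iii} together with the $2$-Lipschitz property of $\psi$ and the identity $\psi(1/x)=-\psi(x)$, $\norm{f_{\gtheta,\gtheta'}-f_{\gtheta_{1},\gtheta_{1}'}}_{\infty}\le 2A_{1}(\ab{\gtheta-\gtheta_{1}}+\ab{\gtheta'-\gtheta_{1}'})$. Since $\bsZ(\gX,\gt,\gt')=\sum_{i=1}^{n}(f_{\gtheta,\gtheta'}(X_{i})-\E_{\gs}f_{\gtheta,\gtheta'}(X_{i}))$, the symmetrization inequality (see e.g.\ van der Vaart and Wellner~\citeyearpar{MR1385671}) bounds $\gw^{\overline{\gS}}(\gs,y)$ by $2\,\E\sup\ab{\sum_{i}\varepsilon_{i}f_{\gtheta,\gtheta'}(X_{i})}$ over the relevant $(\gtheta,\gtheta')$, with $\varepsilon_{i}$ i.i.d.\ Rademacher independent of $\gX$. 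Conditionally on $\gX$ this is a sub-Gaussian process indexed by a ball $B\subset\R^{2d}$ of radius $\sqrt2\,y/(A_{2}\sqrt n)$, with intrinsic metric dominated by $2\sqrt2\,A_{1}\sqrt n\,\ab{\cdot}$; centring it at a point $(\gtheta_{0},\gtheta_{0})$ of the index set, where the associated function vanishes, Dudley's entropy bound gives $\E_{\varepsilon}\sup\le c\,D\sqrt d$, where the diameter $D\le 8A_{1}y/A_{2}$ in the intrinsic metric carries no residual $\sqrt n$ and the covering numbers satisfy $N(\varepsilon)\le(1+D/\varepsilon)^{2d}$, so that $\int_{0}^{D}\sqrt{\log N(\varepsilon)}\,d\varepsilon\le c'D\sqrt d$, with $c,c'$ universal. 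Taking expectations produces $\gw^{\overline{\gS}}(\gs,y)\le C_{0}(A_{1}/A_{2})\sqrt d\,y$.

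Inserting this into \eref{def-en}: the inequality $\gw^{\overline{\gS}}(\gs,y)>6c_{0}^{-1}y^{2}$ can hold only if $y<(c_{0}C_{0}/6)(A_{1}/A_{2})\sqrt d$, whence $\eps^{\overline{\gS}}(\gs)\le\kappa_{2}\sqrt d$ with $\kappa_{2}=\max\{1,(c_{0}C_{0}/6)(A_{1}/A_{2})\}$, independent of $\gs$ and of $\xi$ as required. The only delicate step is the chaining estimate: one must check that the Rademacher process is genuinely sub-Gaussian (Hoeffding), so that the entropy integral over the $2d$-dimensional Euclidean ball scales like $\sqrt d$ times the diameter, and — the crucial point — that this diameter, of order $\sqrt n\cdot A_{1}\cdot(y/(A_{2}\sqrt n))$, is free of $n$; the uniform boundedness of $\psi$ and the vanishing of $f_{\gtheta,\gtheta}$ are precisely what keep the base point and the bounded-increment term harmless and prevent a spurious logarithmic factor. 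Alternatively, one may simply quote the control of $\sup_{\gt,\gt'}\ab{\bsZ(\gX,\gt,\gt')}$ over Lipschitz-parametrised families established in Baraud and Birgé~\citeyearpar{Baraud:2016kq}, of which the process $(\gt,\gt')\mapsto\gPsi(\gX,\gt,\gt')$ is here an instance.
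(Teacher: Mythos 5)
Your proof of the first inequality is identical to the paper's: apply \eref{eq-typeta} with $k=1$, translate to the $\gpi$-scale by $r'\mapsto r'/\sqrt{n}$, and invoke the criterion \eref{Eq-eta} once $\gamma r'^{2}\ge\kappa_{1}\overline{\gamma}$. Nothing to add there.

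For the bound on $\eps^{\overline{\gS}}(\gs)$ you take the same structural path as the paper — pass from $\sB^{\gS}(\gs,y)$ to a Euclidean ball in $\gTheta$ via \eref{metric-h}, use Assumption~\ref{A-MLE}-\ref{H4-iii}, the $2$-Lipschitz property of $\psi$ and $\psi(1/x)=-\psi(x)$ to obtain a sup-norm Lipschitz bound on $(\gtheta,\gtheta')\mapsto\psi\bigl(\sqrt{t_{\gtheta'}/t_{\gtheta}}\bigr)$, then a Euclidean-ball covering bound (Lemma~\ref{lem-Euclidball}) and chaining — but you carry out the chaining step differently. The paper plugs the covering bound together with the variance estimate \eref{eq-var-psi} into Lemma~49 of BBS, which yields $\gw^{\overline{\gS}}(\gs,y)\le C_{1}(y\sqrt d+d)$; you instead Rademacher-symmetrize and apply Dudley's entropy integral to the conditionally sub-Gaussian process, centred at a point $(\gtheta_{0},\gtheta_{0})$ where the function vanishes, which gives the cleaner linear bound $\gw^{\overline{\gS}}(\gs,y)\le C_{0}(A_{1}/A_{2})\sqrt d\,y$. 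Both routes correctly exhibit the essential cancellation: the intrinsic (empirical-$\L_{2}$) diameter of the index set scales like $\sqrt n\cdot A_{1}\cdot\bigl(y/(A_{2}\sqrt n)\bigr)$ and so is $n$-free, which is what keeps the entropy integral of the right order. Your route is a bit more elementary and avoids importing the more elaborate BBS lemma, at the cost of not using the variance information from \eref{eq-var-psi}; the paper's route keeps the variance term, which is why its bound carries the extra additive $d$. Either bound, inserted into \eref{def-en}, gives $\eps^{\overline{\gS}}(\gs)\le\kappa_{2}\sqrt d$ with $\kappa_{2}$ depending only on $A_{1},A_{2}$. Your sketch of the symmetrization and chaining steps is sound; the only point worth spelling out fully is the justification that $\E\sup_t|Z_t|$ is controlled by the one-sided chaining integral once $Z_{t_{0}}=0$ a.s., which you address correctly via the symmetry of the Rademacher process.
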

\begin{proof}
It follows from \eref{eq-typeta} with $k=1$ that
\[
\gpi\left(\sB^{\overline{S}}\left(t_\gtheta, 2r\right)\right)\le \exp\left[\kappa_{1}\overline{\gamma}\right]\pi\left(\sB^{\overline{S}}\left(t_\gtheta,r\right)\right)\quad\mbox{for all }r>0.
\]
Therefore, for all $\gtheta\in \bs{\Theta}$ and $r\ge \sqrt{\kappa_{1}\overline \gamma/(\gamma n)}$,
\[
\pi\left(\sB^{\overline{S}}\left(t_\gtheta, 2r\right)\right)\le \exp\left[\gamma n r^{2}\right]\pi\left(\sB^{\overline{S}}\left(t_\gtheta,r\right)\right)\;
\]
and (\ref{Eq-eta}) implies that $\eta_{n}^{\overline{S},\gpi}(t_{\gtheta})\le\sqrt{\kappa_{1}\overline \gamma/(\gamma n)}$ for all $\gtheta\in \bs{\Theta}$.

Let us now turn to $\eps_{n}^{\overline{S}}(\gs)$. We set, for $0<y\le1$,
\[
\sF_{y}=\left\{\psi\left(\sqrt{t_{\gtheta'}\over t_{\gtheta}}\right),\ t_{\gtheta},t_{\gtheta'}\in\sB^{\overline{S}}(s,y)\right\}.
\]
Our aim is to control the entropy of $\sF_{y}$ in view of bounding 
\begin{equation}
\gw^{\overline{S}}(\gs,y)=\E\cro{\sup_{f\in\sF_{y}}\ab{\sum_{i=1}^{n}f(X_{i})-\E\cro{f(X_{i})}}}
\label{eq-wsbar}
\end{equation}
from above. Since $h$ is bounded by 1, $\sB^{\overline{S}}(s,y)=\sB^{\overline{S}}(s,1)$ for all $y\ge 1$ and it is therefore enough to bound this quantity for $y\in (0,1]$ only, which we shall now do. 

It follows from~\eref{metric-h} that, if $t_{\gtheta}$ belongs to $\sB^{\overline{S}}(s,y)$ or equivalently if $h(s,t_{\gtheta})=h(t_{\gvartheta},t_{\gtheta})\le y$,  then $\ab{\gvartheta-\gtheta}\le A_{2}^{-1}y$. Therefore 
\[
\left\{(\gtheta,\gtheta')\in \bs{\Theta}^{2}\,\left|\,\gt_{\gtheta},\gt_{\gtheta'}\in\sB^{\overline{S}}(s,y)\right.\right\}\subset \cB_{2}(y)
\]
where $\cB_{2}(y)$ denotes the Euclidean ball in $\bs{\Theta}^{2}\subset \R^{2d}$ centred at $(\gvartheta,\gvartheta)$ with radius $\sqrt{2}A_{2}^{-1}y$.

Let $x_{1},\ldots,x_{n}$ belong to $\sX$, $P_{n}=n^{-1}\sum_{i=1}^{n}\delta_{x_{i}}$ be the corresponding empirical measure and $\|\cdot\|_{2}$ be the norm in $\L_{2}(\sX,P_{n})$. 
Since $\psi(1/u)=-\psi(u)$ for all $u\ge 0$ and $\psi$ is Lipschitz on $[0,+\infty)$ (with Lipschitz constant $2$), we deduce from Assumption~\ref{A-MLE}-$\ref{H4-iii}$ that, for all $\gtheta,\gtheta',\overline \gtheta,\overline \gtheta'\in \bs{\Theta}$,
\begin{align*}
\lefteqn{\norm{\psi\left(\sqrt{t_{\gtheta'}\over t_{\gtheta}}\right)-\psi\left(\sqrt{t_{\overline \gtheta'}\over t_{\overline \gtheta}}\right)}_{2}^{2}}\hspace{35mm}\\
=\:&{1\over n}\sum_{i=1}^{n}\ab{\psi\left(\sqrt{t_{\gtheta'}\over t_{\gtheta}}(x_{i})\right)-\psi\left(\sqrt{t_{\overline \gtheta'}\over t_{\overline \gtheta}}(x_{i})\right)}^{2}\\
\le\:&{2\over n}\left[\sum_{i=1}^{n}\ab{\psi\left(\sqrt{t_{\gtheta'}\over t_{\gtheta}}(x_{i})\right)-\psi\left(\sqrt{t_{\overline \gtheta'}\over t_{\gtheta}}(x_{i})\right)}^{2}\right.\\
&\hspace{6mm}\left. +\sum_{i=1}^{n}\ab{\psi\left(\sqrt{t_{\overline \gtheta'}\over t_{\gtheta}}(x_{i})\right)-\psi\left(\sqrt{t_{\overline \gtheta'}\over t_{\overline \gtheta}}(x_{i})\right)}^{2}\right]\\
=\:&{2\over n}\left[\sum_{i=1}^{n}\ab{\psi\left(\sqrt{t_{\gtheta'}\over t_{\gtheta}}(x_{i})\right)-\psi\left(\sqrt{t_{\overline \gtheta'}\over t_{\gtheta}}(x_{i})\right)}^{2}\right.\\
&\hspace{6mm}\left. +\sum_{i=1}^{n}\ab{\psi\left(\sqrt{t_{\gtheta}\over t_{\overline \gtheta'}}(x_{i})\right)-\psi\left(\sqrt{t_{\overline \gtheta}\over t_{\overline \gtheta'}}(x_{i})\right)}^{2}\right]\\
\le\:&8\cro{\norm{\sqrt{t_{\gtheta'}\over t_{\gtheta}}-\sqrt{t_{\overline \gtheta'}\over t_{\gtheta}}}_{\infty}^{2}+\norm{\sqrt{t_{\gtheta}\over t_{\overline \gtheta'}}-\sqrt{t_{\overline \gtheta}\over t_{\overline \gtheta'}}}_{\infty}^{2}}\\
\le\:& 8A_{1}^{2}\cro{\ab{\gtheta'-\overline \gtheta'}^{2}+\ab{\gtheta-\overline \gtheta}^{2}}.
\end{align*}
This implies that the minimal number of closed balls of radius $\eps\in (0,1)$ for the $\L_{2}(\sX,P_{n})$-distance that are necessary to cover $\sF_{y}$ is not larger than the minimal number $N$ of closed balls of radius $\eps/(2\sqrt{2}A_{1})$ for the Euclidean distance on $\R^{2d}$ that are necessary to cover $\cB_{2}(y)$. 
Let us now recall the following classical result :
%
\begin{lem}\label{lem-Euclidball}
The minimal number of closed balls of radius $x$ that are necessary to cover a ball of radius $r>x$ in the Euclidean space $\R^{k}$ is not larger than $(1+2rx^{-1})^{k}$.
\end{lem}
It follows from this lemma that
\[
N\le\cro{1+8A_{1}A_{2}^{-1}y/\eps}^{2d}<\cro{e+8A_{1}A_{2}^{-1}y/\eps}^{2d}.
\]
This remains true for $\varepsilon\ge1$ since
\[
\norm{\psi\left(\sqrt{t_{\gtheta'}\over t_{\gtheta}}\right)-\psi\left(\sqrt{s\over s}\right)}_{2}^{2}=
{1\over n}\sum_{i=1}^{n}\psi^{2}\left(\sqrt{t_{\gtheta'}\over t_{\gtheta}}(x_{i})\right)\le1,
\]
which shows that $\sF_{y}$ is included in a ball of radius one centered at $0=\psi\left(\sqrt{s/s}\right)$.
Besides, the function $\psi$ satisfies~\eref{eq-var-psi}. Therefore, since $h(s,t_{\gtheta})\le y$ for all $t_{\gtheta}\in\sB^{\overline{S}}(s,y)$, $\E\cro{f^{2}(X_{1})}\le 6\sqrt{2}y^{2}<9y^{2}$ for all $f\in\sF_{y}$. This implies that the inequalities (109) and (110) of BBS are satisfied with
\[
v^{2}= 9ny^{2}\qquad\mbox{and}\qquad\overline{\sH}(x)=2d\log(e+Kx)\quad\mbox{with }K=8A_{1}A_{2}^{-1}y.
\]
To apply Lemma~49 of BBS, we bound, using an integration by parts and the fact that $\log(e+Kx)>1$,
\begin{align*}
\frac{\dps{\int_{x}^{+\infty}}u^{-2}\sqrt{\overline{\sH}(u)}\,du}{x^{-1}\sqrt{\overline{\sH}(x)}}&=
\frac{\dps{\int_{x}^{+\infty}}u^{-2}\sqrt{\log(e+Ku)}\,du}{x^{-1}\sqrt{\log(e+Kx)}}\\&=
1+\frac{x}{\sqrt{\log(e+Kx)}}\int_{x}^{+\infty}\frac{K\,du}{2u(e+Ku)\sqrt{\log(e+Ku)}}\\&<1+x\int_{x}^{+\infty}\frac{du}{2u^2}=\frac{3}{2}.
\end{align*}
Since Assumption~10 of BBS is satisfied with $L=3/2$, we derive from Lemma~49 of BBS that $\gw^{\overline{S}}(\gs,y)\le C_{1}\cro{vL\sqrt{H}+L^{2}H}$ with 
\begin{align*}
H&=2d\log\left(e+4A_{1}A_{2}^{-1}y\left[\frac{1}{3y}\bigvee1\right]\right)= 2d\log\left(e+4A_{1}A_{2}^{-1}\left[\frac{1}{3}\bigvee {y}\right]\right)\\
&\le 2d\log\left(e+4A_{1}A_{2}^{-1}\right)\quad\mbox{ for }0<y\le 1.
\end{align*}
This implies that for all $y>0$, $\gw^{\overline{S}}(\gs,y)\le C_{2}\left(y\sqrt{nd}+d\right)$ and it then follows from \eref{def-en} that $\eps_{n}^{\overline{S}}(\gs)\le C_{3}\sqrt{d/n}$ . 
\end{proof}
%

\paragraph{\bf Consequence 4} 
Applying Theorem~\ref{thm-main} with the bounds of Proposition~\ref{prop-eta-eps} leads, since $\xi>\log4$, to
\[
\overline r_{n}\sqrt{n}\le c_{2}\kappa_{2}+c_{3}\kappa_{3}+c_{4}\sqrt{\xi+\log n+2.61}\le\kappa_{0}\lambda_{n},
\]
so that
\begin{equation}
\P_{s}\left[\pi_{\!\gX}\pa{\sB^{\overline{S}}\pa{s,\kappa_{0}\lambda_{n}/\sqrt{n}}}\ge 1-n^{-1}\right]
\ge1-e^{-\xi}.
\label{Eq-J2}
\end{equation}
Putting (\ref{Eq-J1}) and (\ref{Eq-J2}) together, we see that, for $n\ge n_{1}$,
\begin{equation}\label{eq-defB}
\P_{\gs}\left[\min\ac{\pi_{\!\gX}^{L}\pa{\sB_{n}},\pi_{\!\gX}(\sB_{n})}\ge 1-n^{-1}\right]\ge1-2e^{-\xi}
\end{equation}
with
\begin{equation}
\sB_{n}=\sB^{\overline{S}}\pa{s,\kappa_{4}\lambda_{n}/\sqrt{n}}\quad\text{and}\quad\kappa_{4}=16\vee\kappa_{0},
\label{def-Bn}
\end{equation} 

\paragraph{\bf Consequence 5} 
Let us now focus on the behaviour of the MLE $\widehat{\gtheta}_{n}$ on $\bs{\Theta}$. 
%
\begin{prop}\label{prop-mle}
Under Assumption~\ref{A-MLE}, a maximum likelihood estimator exists and any such sequence of estimators $(\widehat{\gtheta}_{n})$ is $\sqrt{n}$-consistent.
More precisely, for some $\kappa_{5}=\kappa_{5}(d,A_{1},A_{2})>0$ and all $n\ge1$,
\begin{equation}
\P_s\left[h\left(s,t_{\widehat{\gtheta}_{n}}\right)\le \kappa_{5}\sqrt{(1+\xi)/n}\right]\ge1-e^{-\xi}\quad\mbox{for all }\xi>0.
\label{Eq-J3}
\end{equation}
\end{prop}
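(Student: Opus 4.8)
The plan is to prove Proposition~\ref{prop-mle} by combining the standard Wilks-type argument for the MLE with the metric-entropy control of the Hellinger bracketing already available from Assumption~\ref{A-MLE}, much as in Consequence~3 above. First I would recall that, since $\gTheta$ is compact (Assumption~\ref{A-MLE}-\ref{H4-i}) and the map $\gtheta\mapsto t_{\gtheta}$ is continuous from $(\gTheta,|\cdot|)$ into $(\overline{S},h)$ (a consequence of Assumption~\ref{A-MLE}-\ref{H4-iii}), the log-likelihood $\gtheta\mapsto L(\gX|\gtheta)=\sum_{i=1}^{n}\log t_{\gtheta}(X_i)$ is continuous and almost surely bounded above on the compact set $\gTheta$ — positivity of all $t_{\gtheta}$ on $\sX$ rules out the usual $-\infty$ pathologies — so a maximiser $\widehat{\gtheta}_{n}$ exists.

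For the rate, I would follow the classical peeling (slicing) scheme over Hellinger shells. Fix $\gvartheta$ with $s=t_{\gvartheta}$, set $h_{n}=h\bigl(s,t_{\widehat\gtheta_{n}}\bigr)$, and for $j\ge 1$ consider the shells $\cS_{j}=\{\gtheta\in\gTheta:\ 2^{j-1}r\le h(t_{\gvartheta},t_{\gtheta})\le 2^{j}r\}$ with $r=\kappa\sqrt{(1+\xi)/n}$. On the event $\{h_{n}>r\}$ the MLE lies in some $\cS_{j}$ and satisfies $L(\gX|\widehat\gtheta_{n})\ge L(\gX|\gvartheta)$, hence the centred empirical process $\sum_{i=1}^{n}\bigl[\log(t_{\gtheta}/s)(X_i)-\E_{s}\log(t_{\gtheta}/s)(X_1)\bigr]$ must dominate $n$ times the Kullback gap $K(P_{\gvartheta},P_{\gtheta})$. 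Here I would invoke the well-known fact (valid because Assumption~\ref{A-MLE}-\ref{H4-iii} gives a uniform bound on $\|\sqrt{s/t_{\gtheta}}\|_{\infty}$, so the log-densities are uniformly bounded on $\gTheta$) that $K(P_{\gvartheta},P_{\gtheta})\ge c\,h^{2}(t_{\gvartheta},t_{\gtheta})$ for a constant $c$ depending only on $A_{1}$. The supremum of the empirical process over $\cS_{j}$ is then controlled exactly as in the proof of Proposition~\ref{prop-eta-eps}: using \eref{eq-eqdis} the shell has Euclidean diameter $O(2^{j}r)$, Lemma~\ref{lem-Euclidball} bounds its covering numbers by $(1+A_{0}2^{j}r/\eps)^{d}$, the variance of each increment is $O(4^{j}r^{2})$ by the Lipschitz bound of Assumption~\ref{A-MLE}-\ref{H4-iii}, and Lemma~49 of BBS gives an expected supremum of order $2^{j}r\sqrt{dn}+d$. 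A Bernstein/Talagrand concentration inequality (for instance Proposition~\ref{talagrand} applied to the countable dense subset furnished by Assumption~\ref{A-MLE}) upgrades this to a high-probability bound with the exponential factor $e^{-\xi-j}$; summing the resulting geometric series over $j\ge 1$ and choosing $\kappa=\kappa_{5}(d,A_{1},A_{2})$ large enough makes the total probability at most $e^{-\xi}$, which yields \eref{Eq-J3}. Finally, $\sqrt{n}$-consistency is just the statement that $\sqrt{n}\,h(s,t_{\widehat\gtheta_{n}})=O_{\P_{s}}(1)$, which follows by letting $\xi$ be a fixed large constant, together with the equivalence \eref{metric-h} transferring the Hellinger bound to a Euclidean bound on $|\widehat\gtheta_{n}-\gvartheta|$.

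The main obstacle I anticipate is the bookkeeping in the peeling step: one has to make sure the covering-number/variance estimates are uniform in $j$ up to the maximal scale ($h$ is bounded by $1$, so only finitely many shells are non-empty and the shell at the top scale must be handled by a single-ball bound, exactly as the $\eps>2$ case in Proposition~\ref{prop-eta-eps}), and one must verify that the additive ``$+d$'' term coming from Lemma~49 of BBS does not spoil the rate — it contributes a term of order $d/n$ to $h_{n}^{2}$, which is absorbed into $\kappa_{5}^{2}(1+\xi)/n$. Everything else is a routine reprise of arguments already deployed in the paper, so I would keep the writeup short and refer back to the proof of Proposition~\ref{prop-eta-eps} for the entropy computation rather than repeating it.
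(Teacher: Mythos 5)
Your proof is correct but takes a genuinely different route from the paper. The paper proves Proposition~\ref{prop-mle} by invoking Theorem~7.4 of van de Geer~(2000) applied to the \emph{convexified} densities $u_{\gtheta}=(t_{\gtheta}+s)/2$: one bounds the Hellinger bracketing entropy of $\{\sqrt{u_{\gtheta}}\}$ locally, computes the entropy integral $J_B$, chooses $\Psi(\delta)=2\delta\sqrt{\kappa d}$, and reads off the rate and the exponential tail from van de Geer's statement. Your proposal instead carries out a direct peeling over Hellinger shells, compares the centred log-likelihood process to the Kullback gap (using the universal inequality $K\ge 2h^2$, whose constant in fact does not depend on $A_1$), and controls each shell via the paper's own Proposition~\ref{talagrand} together with the same entropy computation as in Proposition~\ref{prop-eta-eps}. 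The key extra observation your approach needs — and which van de Geer's convexification is there precisely to avoid having to make — is that Assumption~\ref{A-MLE}-\ref{H4-iii}, combined with compactness, makes the likelihood ratios $t_{\overline\gtheta}/t_{\gtheta}$ uniformly bounded above \emph{and} below on $\gTheta$: taking $\gtheta'=\gtheta$ in \ref{H4-iii} gives $\sqrt{t_{\overline\gtheta}/t_{\gtheta}}\le 1+A_1\mathrm{diam}(\gTheta)$, and swapping $\gtheta$ and $\overline\gtheta$ gives the matching lower bound $\ge (1+A_1\mathrm{diam}(\gTheta))^{-1}$; since $\mathrm{diam}(\gTheta)\le 1/A_2$ by~\eref{metric-h}, the bound depends only on $A_1,A_2$, as required for $\kappa_5$. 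With this in hand, the log-density increments are uniformly bounded and Lipschitz, the variance is $O(h^2)$, and your shell-by-shell Talagrand argument closes. What van de Geer's theorem buys you is not having to establish or even discuss this boundedness; what your route buys is that it is self-contained within the tools already developed in the paper (Proposition~\ref{talagrand}, Lemma~\ref{lem-Euclidball}, Lemma~49 of BBS), rather than importing a new reference. Either way you should make the boundedness observation explicit, since ``uniformly bounded log-densities'' is not literally what Assumption~\ref{A-MLE}-\ref{H4-iii} says; it is a short consequence of it that your whole argument rests on.
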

\begin{proof}
Since the proof will be based on Theorem~7.4 page 99 of van de Geer~\citeyearpar{MR1739079}, we shall follow here her notations and denote by $\kappa$ a generic function of the two parameters $A_{1}$ and $A_{2}=A_{2}(\gvartheta)$ (recalling that $A_{3}=A_{1}/\sqrt{2}$ and $s=t_{\gvartheta})$ that may change from line to line. The above mentioned theorem relies on bounds for the Hellinger bracketing entropy
of the set of densities $\overline{U}=\{u_{\gtheta},\,\gtheta\in\bs{\Theta}\}$ with $u_{\gtheta}=(t_{\gtheta}+s)/2$. Following her notations, we shall set, for $\delta\in (0,1]$,
\[
\overline \cP^{1/2}(\delta)=\left\{\st\sqrt{u_{\gtheta}}\text{ such that }\gtheta\in\bs{\Theta} \ \mbox{and}\ h(u_{\gtheta},s)\le \delta\right\}
\]
and consider the quantity $H_{B}\left(\eps,\overline \cP^{1/2}(\delta),\mu\right)$ for $\eps>0$ which is the (local) entropy with bracketing, i.e.\ the logarithm of the smallest number of brackets of $\L_{2}(\mu)$-length not larger than $\eps$ that are necessary to cover $\overline \cP^{1/2}(\delta)$. Finally, let 
\begin{align*}
J_{B}\!\left(\overline \cP^{1/2}(\delta),\mu\right)&=\int_{\delta^{2}2^{-13}}^{\delta}
\sqrt{H_{B}\!\left(z,\overline \cP^{1/2}(\delta),\mu\right)}\,dz\\
&\le \int_{0}^{\delta}
\sqrt{H_{B}\!\left(z,\overline \cP^{1/2}(\delta),\mu\right)}\,dz.
\end{align*}
The concavity of the square root and Assumption~\ref{A-MLE}-$\ref{H4-iii}$ imply that
\[
\norm{\sqrt{u_{\gtheta}\over s}-\sqrt{u_{\gtheta'}\over s}}_{\infty}\le\frac{1}{\sqrt{2}}  
\norm{\sqrt{t_{\gtheta}\over s}-\sqrt{t_{\gtheta'}\over s}}_{\infty}\le\frac{A_{1}}{\sqrt{2}}|\gtheta-\gtheta'|
=A_{3}|\gtheta-\gtheta'|.
\]
Hence, for $z>0$, the inequality $|\gtheta-\gtheta'|\le z$ implies that, for all $x\in\sX$,
\[
\sqrt{u_{\gtheta}(x)}-A_{3}\sqrt{s(x)}\,z\le\sqrt{u_{\gtheta'}(x)}\le\sqrt{u_{\gtheta}(x)}+A_{3}\sqrt{s(x)}\,z
\]
and the square $\L_{2}(\mu)$-length of this bracket is
\[
\int\left[\left(\sqrt{u_{\gtheta}(x)}+A_{3}\sqrt{s(x)}z\right)-\left(\sqrt{u_{\gtheta}(x)}-A_{3}\sqrt{s(x)}z\right)\right]^{2}d\mu(x)=2A_{1}^{2}z^{2}.
\]
Taking $\overline \gtheta=\gvartheta$ in \eref{metric-h'}, we derive that 
\[
\overline \cP^{1/2}(\delta)\subset\left\{\st\sqrt{u_{\gtheta}}\text{ such that }\gtheta\in\bs{\Theta} \ \mbox{and}\ |\gtheta-\gvartheta|\le2\delta/A_{2}\right\},
\]
so that the previous computations imply that any covering of $\cB(\gvartheta,2\delta/A_{2})$ with closed balls of radii not larger than $z=\eps/(A_{1}\sqrt{2})$ leads to a covering of $\overline \cP^{1/2}(\delta)$ with brackets the $\L_{2}(\mu)$-lengths of which are not larger than $\eps$. Since, by Lemma~\ref{lem-Euclidball}, it is possible to find such a covering of cardinality $N$ satisfying
\[
\log N\le d\log\left(1+\frac{4\delta}{zA_{2}}\right)<\frac{4d\delta}{zA_{2}}=
\frac{4\sqrt{2}dA_{1}\delta}{\varepsilon A_{2}}={C_{4}d\delta\over \eps},
\]
we derive that
\[
J_{B}\left(\overline \cP^{1/2}(\delta),\mu\right)\le
\sqrt{C_{4}d \delta} \int_{0}^{\delta}\varepsilon^{-1/2}\,d\eps=2\delta\sqrt{C_{4}d}.
\]
We can therefore take $\Psi(\delta)=2\delta\sqrt{C_{4}d}$ in the statement of Theorem~7.4  of van de Geer~\citeyearpar{MR1739079}, hence $\delta_{n}=2c\sqrt{C_{4}d/n}$ for some universal constant $c\ge e$ and
\[
\P_s\left[h\left(s,t_{\widehat{\gtheta}_{n}}\right)>\delta\right]\le c\exp\left[-n(\delta/c)^{2}\right] \quad\mbox{for
}\delta\ge\delta_{n}.
\]
The result follows by setting $\kappa_{5}=c\sqrt{4\kappa d+\log c}$ and
\[
\delta=cn^{-1/2}\max\ac{\sqrt{4C_{4}d},\sqrt{\log c+\xi}}\le \kappa_{5}n^{-1/2}\sqrt{1+\xi}.\qedhere
\]
\end{proof}
%

\subsection{An essential intermediate result}\label{PV-2}
Let us set, for $J\in\N$, $n\ge d$, $\Gamma\in[1,n/d]$ and $\delta_{n}=\sqrt{\Gamma d/n}\le1$,
\begin{equation}
B=\ac{\gtheta\in \bs{\Theta}\,|\, h(t_{\gvartheta},t_{\gtheta})\le \delta_{n}}\quad\mbox{and}\quad B'_{J}=\ac{\gtheta\in \bs{\Theta}\,|\, h(t_{\gvartheta},t_{\gtheta})\le 2^{J/2}\delta_{n}}.
\label{def-BAB}
\end{equation}
We want to establish the following result.
%
\begin{prop}\label{etape0001}
Under Assumption~\ref{A-MLE}, there exist a positive constant $C_{5}$ and a  positive integer $J_{0}$, both independent of $n$, such that, for all $J\ge J_{0}$,
\begin{align*}
\lefteqn{\P_{\gs}\left[\sup_{\gtheta'\in \bs{\Theta}}\gPsi(\gX,t_{\gtheta},t_{\gtheta'})=\sup_{\gtheta'\in B'_{J}}\gPsi(\gX,t_{\gtheta},t_{\gtheta'})\quad\mbox{for all }\gtheta\in B\right]}\hspace{80mm}\\
&\ge1-\exp[-C_{5}2^{J}\Gamma d].
\end{align*}
\end{prop}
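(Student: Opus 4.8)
The plan is to show that for every $\gtheta\in B$, the supremum of $\gPsi(\gX,t_{\gtheta},t_{\gtheta'})$ over $\gTheta$ is actually attained (up to an arbitrarily small error) on the enlarged ball $B'_{J}$, because any $\gtheta'$ far from $\gvartheta$ gives a value of $\gPsi$ that is strictly smaller than $\gPsi(\gX,t_{\gtheta},t_{\gvartheta})$, the latter being already on $B'_{J}$ when $\gtheta\in B\subset B'_{J}$. First I would fix $\gtheta\in B$ and split $\gTheta$ into the dyadic shells $C_{j}=\{\gtheta'\,|\,2^{(j-1)/2}\delta<h(t_{\gvartheta},t_{\gtheta'})\le 2^{j/2}\delta\}$ for $j>J$, together with $B'_{J}$. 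On each such shell I want a uniform upper bound for $\gPsi(\gX,t_{\gtheta},t_{\gtheta'})$ obtained from Theorem~\ref{main} (applied, via Assumption~\ref{hypo-mes}, to the model $\overline S$ with its dense countable subset $S$), namely the bound
\[
\gPsi(\gX,t_{\gtheta},t_{\gtheta'})\le c_{7}\gh^{2}(\gs,t_{\gtheta})-c_{8}\gh^{2}(\gs,t_{\gtheta'})+c_{5}\bigl(\eps^{\overline{\gS}}(\gs)\bigr)^{2}+c_{6}(\xi_{j}+2.4)
\]
valid on an event of probability at least $1-e^{-\xi_{j}}$, where I will choose $\xi_{j}$ growing like a constant times $2^{j}\Gamma d$ so that the bad events are summable and the union bound gives the final exponential $1-\exp[-A_{4}2^{J}\Gamma d]$. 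Using $\gh^{2}=n h^{2}$ and $h(\gs,t_{\gtheta})\le\delta$ (since $\gtheta\in B$), the right-hand side is at most
\[
c_{7}n\delta^{2}-c_{8}n\,2^{j-1}\delta^{2}+c_{5}\kappa_{2}^{2}d+c_{6}(\xi_{j}+2.4),
\]
where I invoke Proposition~\ref{prop-eta-eps} to replace $\eps^{\overline{\gS}}(\gs)$ by $\kappa_{2}\sqrt{d}$; since $n\delta^{2}=\Gamma d$ this is $(c_{7}-c_{8}2^{j-1})\Gamma d+c_{5}\kappa_{2}^{2}d+c_{6}(\xi_{j}+2.4)$.

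Next I would produce a matching lower bound for the competitor value $\gPsi(\gX,t_{\gtheta},t_{\gvartheta})$, which is what the true supremum must beat. Using $\gPsi(\gX,t_{\gtheta},t_{\gvartheta})=-\gPsi(\gX,t_{\gvartheta},t_{\gtheta})$ and again Theorem~\ref{main} (with $\gu=t_{\gvartheta}$, $\gu'=t_{\gtheta}\in S$, after the usual density argument of Assumption~\ref{hypo-mes}), one gets on the same event
\[
\gPsi(\gX,t_{\gtheta},t_{\gvartheta})\ge -\bigl[c_{7}\gh^{2}(\gs,t_{\gvartheta})-c_{8}\gh^{2}(\gs,t_{\gtheta})+c_{5}\kappa_{2}^{2}d+c_{6}(\xi'+2.4)\bigr]
\ge -c_{5}\kappa_{2}^{2}d-c_{6}(\xi'+2.4),
\]
since $h(\gs,t_{\gvartheta})=0$ and the $\gh^{2}(\gs,t_{\gtheta})$ term only helps. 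Comparing the two displays, the shell $C_{j}$ cannot contain the supremum as soon as
\[
(c_{7}-c_{8}2^{j-1})\Gamma d+2c_{5}\kappa_{2}^{2}d+c_{6}(\xi_{j}+\xi'+4.8)<0 .
\]
With $\xi_{j}=a\,2^{j}\Gamma d$ for a small enough absolute constant $a$ (and $\xi'$ chosen comparably, e.g.\ $\xi'=\xi_{J}$), the dominant negative term $-c_{8}2^{j-1}\Gamma d$ beats all the others for $j\ge J_{0}$, $J_{0}$ an absolute integer, and the inequality holds for every $j>J\ge J_{0}$ and every $\Gamma\in[1,n/d]$ (the restriction $\Gamma\le n/d$ guaranteeing $\delta\le 1$, so $\eps^{\overline\gS}(\gs)$-type bounds and the metric equivalence are in the valid range, and $2^{J/2}\delta\le\sqrt{n}$ is not needed since the shells beyond $\sqrt n$ are empty). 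Summing the exceptional probabilities, $\sum_{j>J}e^{-a 2^{j}\Gamma d}\le 2e^{-a2^{J+1}\Gamma d}\le\exp[-A_{4}2^{J}\Gamma d]$ for a suitable $A_{4}$, which is the claimed bound; note this event is uniform in $\gtheta\in B$ because the Theorem~\ref{main} bounds hold simultaneously for all $\gu\in\overline S$.

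The main obstacle I anticipate is bookkeeping the uniformity correctly: Theorem~\ref{main} gives, on a single event $\Omega_{\xi_{j}}$, a bound that is simultaneously valid for all $\gu\in\overline S$ and $\gu'\in S$, so one must be careful that the ``good'' event here is the intersection over the shell index $j$ only (not over $\gtheta$ or $\gtheta'$ separately), and that on that intersection both the upper bound on each $C_{j}$ and the lower bound on the competitor hold for every $\gtheta\in B$. A second, more technical point is the passage from $S$ to all of $\overline S$: Theorem~\ref{main}'s conclusion is stated for $\gt\in\overline\gS$ but the companion density argument (Assumption~\ref{hypo-mes}-(ii), as used in the proof of Theorem~\ref{main}) is needed to make sense of $\gPsi(\gx,\gt,\gt')$ with $\gt$ the limit of a sequence in $S$; since $\gvartheta\in\gTheta$ and $t_{\gtheta}\in\overline S$ may fail to lie in the chosen countable $S$, one approximates and passes to the limit in the shell inequalities, which is routine but must be spelled out. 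Finally, one should record that the dependence of $J_{0}$ and $A_{4}$ on the problem parameters is only through the absolute constants $c_{5},c_{6},c_{7},c_{8}$ and through $\kappa_{2}$ (itself a function of $d,A_{1},A_{2}$), hence $J_{0}$ can indeed be taken independent of $\xi$, as asserted.
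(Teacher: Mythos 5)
Your approach is correct in spirit but more elaborate than necessary, and the extra machinery you introduce is not needed. Two points of comparison with the paper's own proof.

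First, your choice of competitor. You compare $\gPsi(\gX,t_{\gtheta},t_{\gtheta'})$ on the far shells against $\gPsi(\gX,t_{\gtheta},t_{\gvartheta})$, which forces you to prove a separate lower bound on the latter via a second invocation of Theorem~\ref{main}. The paper instead uses the \emph{trivial} competitor $\gtheta'=\gtheta$: since $\gtheta\in B\subset B'_J$ and $\gPsi(\gX,t_{\gtheta},t_{\gtheta})=\sum_i\psi(1)=0$, one has $\sup_{\gtheta'\in B'_J}\gPsi(\gX,t_{\gtheta},t_{\gtheta'})\ge 0$ with no probabilistic argument at all. It then suffices to show that $\sup_{\gtheta'\in\gTheta\setminus B'_J}\gPsi(\gX,t_{\gtheta},t_{\gtheta'})<0$. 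This removes the entire lower-bound half of your argument, including the auxiliary parameter $\xi'$ whose bookkeeping is left somewhat implicit in your write-up (if you apply Theorem~\ref{main} a second time with parameter $\xi'$, you get a second exceptional event that must be intersected in, which you do not explicitly account for).

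Second, the dyadic shell decomposition is redundant. The upper bound furnished by Theorem~\ref{main} already scales with $\gh^{2}(\gs,\gt')$: on a \emph{single} event $\Omega_z$ with $z=A_4 2^J\Gamma d$, for every $\gtheta\in B$ and every $\gtheta'$ with $h(s,t_{\gtheta'})>2^{J/2}\delta$, one gets directly
\[
\gPsi(\gX,t_{\gtheta},t_{\gtheta'})\le c_7\,n\delta^2-c_8\,n2^J\delta^2+c_5\kappa_2^2 d+c_6(z+2.4)
=2^J\Gamma d\left[c_6A_4+2^{-J}\bigl(c_7+c_5\kappa_2^2+2.4c_6\bigr)-c_8\right],
\]
which is strictly negative for $A_4=c_8/(2c_6)$ and $J\ge J_0$. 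Splitting into shells and applying the theorem once per shell with $\xi_j=a2^j\Gamma d$ gives the same net outcome after a union bound, but brings no gain since the closest shell $j=J+1$ already determines the order of the bound. Finally, you correctly identify that the passage from the dense countable $S$ (or $\gTheta'$) to all of $\gTheta$ needs an approximation argument; the paper handles this through the continuity and positivity of $\gtheta\mapsto t_{\gtheta}(x)$, which makes $\gtheta'\mapsto\gPsi(\gX,t_{\gtheta},t_{\gtheta'})$ continuous, so it is enough to check the strict inequality on $\gTheta'\setminus B'_J$.
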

\begin{proof}
Since $B\subset B'_{J}$, $\sup_{\gtheta'\in B'_{J}}\gPsi(\gX,t_{\gtheta},t_{\gtheta'})\ge \gPsi(\gX,t_{\gtheta},t_{\gtheta})=0$, for all $\gtheta\in B$. Therefore it suffices to prove that
\[
\P_{\gs}\left[\sup_{\gtheta'\in \bs{\Theta}\setminus B'_{J}}\gPsi(\gX,t_{\gtheta},t_{\gtheta'})<0\quad\mbox{for all }\gtheta\in B\right]\ge1-\exp[-C_{5}2^{J}\Gamma d].
\]
Let $\bs{\Theta}'$ be a countable and dense subset of $\bs{\Theta}$ and take $\gS=\{\gt_{\gtheta}, \gtheta\in \bs{\Theta}'\}$. Since, by Assumption~\ref{A-MLE}-$\ref{H4-iii}$, for all $x\in\sX$, the mapping $\gtheta\mapsto t_{\gtheta}(x)$ is positive and continuous on $\bs{\Theta}$, to prove that $\sup_{\gtheta'\in \bs{\Theta}\setminus B'_{J}}\gPsi(\gX,t_{\gtheta},t_{\gtheta'})<0$, it suffices to find some $c>0$ such that
\[
\gPsi(\gX,t_{\gtheta},t_{\gtheta'})\le -c\quad\mbox{ for all }
\gtheta'\in \bs{\Theta}'\;\mbox{ with }\;h(\gvartheta,\gtheta')>  2^{J/2}\delta_{n}.
\]
Assumption~\ref{hypo-mes} being fulfilled, we may apply Theorem~\ref{main} with Proposition~\ref{prop-eta-eps} and use that $\min\{\Gamma,d\}\ge1$ to get the following result: 
for $z=C_{5}2^{J}\Gamma d$, there exists a set of probability at least $1-e^{-z}$ on which for all $\gtheta\in B$ and all $\gtheta'\in\bs{\Theta}'\setminus B'_{J}$
\begin{align*}
\gPsi(\gX,t_{\gtheta},t_{\gtheta'})&\le c_{7}nh^{2}(s,t_{\gtheta}) -c_{8}nh^{2}(s,t_{\gtheta'}) + c_{5}\kappa_{3}^{2}+c_{6}(z+2.4)\\
&\le c_{7}n\delta^{2}-c_{8}n2^{J}\delta^{2}+c_{5}\kappa_{3}^{2}+c_{6}(C_{5}2^{J}\Gamma d+2.4)\\
&= 2^{J}\Gamma d\cro{c_{6}C_{5}+2^{-J}\pa{c_{7}+c_{5}{\kappa_{3}^{2}\over \Gamma d}+{2.4c_{6}\over \Gamma d}}-c_{8}}\\
&\le 2^{J}\Gamma d\cro{c_{6}C_{5}+2^{-J}\pa{c_{7}+c_{5}\kappa_{3}^{2}+2.4c_{6}}-c_{8}}.
\end{align*}
The right-hand side is negative if we set $C_{5}=c_{8}/(2c_{6})$ and take $J$ larger than some constant $J_{0}$ only depending on $\kappa_{3}$, which proves the result.
\end{proof}

\subsection{Controlling $h^{2}(\pi_{\!\gX}^{L},\pi_{\!\gX})$}\label{PV-3}
Increasing $n_{1}$ again if necessary, we assume that $\sqrt{n_{1}}\ge\kappa_{4}\lambda_{n_{1}}$
with $\kappa_{4}$ given by (\ref{def-Bn}) and define $\Gamma$ and $J$ by
\[
\Gamma=(\kappa_{4}\lambda_{n})^2/d\qquad\mbox{and}\qquad
J=\inf\left\{j\ge J_0\,\left|\,C_{5}\kappa_{4}^{2}2^{j}\ge1\right.\right\}
\]
with $J_{0}$ provided by Proposition~\ref{etape0001}. It follows that $C_{5}2^{J}\Gamma d>\xi$ and that $\sB_{n}=\left\{t_\gtheta,\ \gtheta\in B\right\}$ where $\sB_{n}$ and $B$ are defined by~\eref{def-Bn} and~\eref{def-BAB} respectively. If $n\ge n_{1}$, $\Gamma d\le n$ and Proposition~\ref{etape0001} leads to
\[
\P_{\gs}\left[\gPsi(\gX,t_{\gtheta})=\sup_{\gtheta'\in B'_J}\gPsi(\gX,t_{\gtheta},t_{\gtheta'})\quad\mbox{for all }\gtheta\in {B}\right]\ge1-e^{-\xi}.
\]
For $\gtheta,\gtheta'\in \bs{\Theta}$, let us now consider the log-likelihood ratio 
\[
\gL(\gX,t_{\gtheta},t_{\gtheta'})=\log\pa{\prod_{i=1}^{n}{t_{\gtheta'}(X_{i})\over t_{\gtheta}(X_{i})}}=\sum_{i=1}^{n}\cro{\st\log t_{\gtheta'}(X_{i})-\log t_{\gtheta}(X_{i})}
\]
and set 
\[
\gL(\gX,t_{\gtheta})=\sup_{\gtheta'\in \bs{\Theta}}\gL(\gX,t_{\gtheta},t_{\gtheta'})=\cro{\sup_{\gtheta'\in \bs{\Theta}}\sum_{i=1}^{n}\log t_{\gtheta'}(X_{i})}-\sum_{i=1}^{n}\log t_{\gtheta}(X_{i}).
\]
It also follows from (\ref{Eq-J3}) that, with probability at least $1-e^{-\xi}$,
\[
\sup_{\gtheta'\in \bs{\Theta}}\sum_{i=1}^{n}\log t_{\gtheta'}(X_{i})=\sup_{\gtheta'\in B''}\sum_{i=1}^{n}\log t_{\gtheta'}(X_{i})
\]
where $B''=\left\{\gtheta, h(t_{\gvartheta},t_{\gtheta})\le \kappa_{5}\sqrt{(1+\xi)/n}\right\}$, hence
\[
\P_{\gs}\left[\gL(\gX,t_{\gtheta})=\sup_{\gtheta'\in B''}\gL(\gX,t_{\gtheta},t_{\gtheta'})\quad\mbox{for all }\gtheta\in {B}\right]\ge1-e^{-\xi}.
\]
Setting
\begin{equation}
\kappa_{6}=\left(2^{J/2}\kappa_{4}\right)\vee \kappa_{5}\qquad\text{and}\qquad
B'=\ac{\gtheta,\; h(t_{\gvartheta},t_{\gtheta})\le\kappa_{6}\lambda_{n}/\sqrt{n}}
\label{eq-rn}
\end{equation}
so that $B'\supset B''\cup B$, we conclude that, with a probability at least $1-2e^{-\xi}$, we simultaneously have for all $\gtheta\in B$,
\begin{equation}
\left\{\begin{array}{l}  
\!\!\gPsi(\gX,t_{\gtheta})=\dps{\sup_{\gtheta'\in B'}\gPsi(\gX,t_{\gtheta},t_{\gtheta'})=
\sup_{\gtheta'\in B'}\sum_{i=1}^{n}\psi\pa{\sqrt{t_{\gtheta'}\over t_{\gtheta}}(X_{i})}};\\
\!\!\dps{\gL\left(\gX,t_{\gtheta}\right)=\sup_{\gtheta'\in B'}\gL(\gX,t_{\gtheta},t_{\gtheta'})=\sup_{\gtheta'\in B'}\sum_{i=1}^{n}\cro{\st\log t_{\gtheta'}(X_{i})-\log t_{\gtheta}(X_{i})}}.
\end{array}\right.\hspace{-4mm}
\label{Eq-sup}
\end{equation}
From now on we assume that $\omega\in\widetilde \Omega_\xi$, the set of probability at least $1-4e^{-\xi}$ on which both (\ref{eq-defB}) and (\ref{Eq-sup}) hold. In order to simplify the notations let us set, for all $t\in\overline{S}$,
\[
L_{\!\gX}(t)=\exp\left[-\gL\left(\gX,t\right)\right],\ R_{\!\gX}(t)=\exp\left[-\beta\gPsi\left(\gX,t\right)\right]\text{ and }\Delta_{\!\gX}(t)=\frac{L_{\!\gX}(t)}{R_{\!\gX}(t)}.
\]
Then, according to \eref{eq-LPS2}, the respective densities of the classical Bayes and the $\rho$-Bayes posterior distributions, for $t\in\overline S$, are
\[
\frac{d\pi_{\!\gX}^{L}}{d\pi}(t)=\frac{L_{\!\gX}(t)}{\dps{\int_{\overline S}}L_{\!\gX}(t')\,d\pi(t')}\qquad
\text{and}\qquad\frac{d\pi_{\!\gX}}{d\pi}(t)=\frac{R_{\!\gX}(t)}{\dps{\int_{\overline S}}R_{\!\gX}(t')\,d\pi(t')}.
\]
It follows from the definitions of $\sB_{n}$ and $\pi_{\!\gX}^{L}$ and (\ref{eq-defB}) that
\[
\int_{\overline S}L_{\!\gX}(t)\,d\pi(t)=\frac{1}{\pi_{\!\gX}^{L}\pa{\sB_{n}}}\int_{\sB_{n}}L_{\!\gX}(t)\,d\pi(t)
\le{1\over 1-n^{-1}}\int_{\sB_{n}}L_{\!\gX}(t)\,d\pi(t)
\]
and similarly, 
\[
\int_{\overline S}R_{\!\gX}(t)\,d\pi(t)\le{1\over 1-n^{-1}}\int_{\sB_{n}}R_{\!\gX}(t)\,d\pi(t)
\]
so that the Hellinger affinity $\rho\pa{\pi_{\!\gX}^{L}, \pi_{\!\gX}}$ between $\pi_{\!\gX}^{L}$ and $\pi_{\!\gX}$ satisfies 
\begin{align*}
\rho\pa{\pi_{\!\gX}^{L}, \pi_{\!\gX}}&={\dps{\int_{\overline S}\sqrt{L_{\!\gX}(t)R_{\!\gX}(t)}\,d\pi(t)}\over \cro{\dps{\int_{\overline S}}L_{\!\gX}(t)\,d\pi(t)\dps{\int_{\overline S}}R_{\!\gX}(t)\,d\pi(t)}^{1/2}}\\
&\ge{(1-n^{-1})\dps{\int_{\sB_{n}}\sqrt{L_{\!\gX}(t)R_{\!\gX}(t)}\,d\pi(t)}\over \cro{\dps{\int_{\sB_{n}}}
L_{\!\gX}(t)\,d\pi(t)\dps{\int_{\sB_{n}}}R_{\!\gX}(t)\,d\pi(t)}^{1/2}},
\end{align*}
hence
\begin{equation}
\rho\pa{\pi_{\!\gX}^{L}, \pi_{\!\gX}}\ge{(1-n^{-1})\dps{\int_{\sB_{n}}R_{\!\gX}(t)}\sqrt{\Delta_{\!\gX}(t)}\,
d\pi(t)\over \cro{\dps{\int_{\sB_{n}}}R_{\!\gX}(t)\Delta_{\!\gX}(t)\,d\pi(t)\dps{\int_{\sB_{n}}}R_{\!\gX}(t)\,d\pi(t)}^{1/2}}.
\label{eq-rho1}
\end{equation}
By H\"older's inequality,
\[
\int_{\sB_{n}}R_{\!\gX}(t)\,d\pi(t)
\le\cro{\int_{\sB_{n}}R_{\!\gX}(t)\sqrt{\Delta_{\!\gX}(t)}\,d\pi(t)}^{2/3}
\cro{\int_{\sB_{n}}\frac{R_{\!\gX}(t)}{\Delta_{\!\gX}(t)}\,d\pi(t)}^{1/3},
\]
or equivalently,
\[
\int_{\sB_{n}}R_{\!\gX}(t)\sqrt{\Delta_{\!\gX}(t)}\,d\pi(t)\ge\left[\int_{\sB_{n}}R_{\!\gX}(t)\,d\pi(t)\right]^{3/2}\cro{\int_{\sB_{n}}\frac{R_{\!\gX}(t)}{\Delta_{\!\gX}(t)}\,d\pi(t)}^{-1/2}.
\]
Therefore, setting $\Lambda_{\!\gX}(t)=\Delta_{\!\gX}(t)\vee\left[\Delta_{\!\gX}(t)\right]^{-1}\ge1$, we derive from (\ref{eq-rho1}) that
\begin{align*}
\rho\pa{\pi_{\!\gX}^{L}, \pi_{\!\gX}}&\ge\frac{\left(1-n^{-1}\right)\dps{\int_{\sB_{n}}R_{\!\gX}(t)}\,d\pi(t)}{\cro{\dps{\int_{\sB_{n}}}R_{\!\gX}(t)\Delta_{\!\gX}(t)\,d\pi(t)\dps{\int_{\sB_{n}}}
\frac{R_{\!\gX}(t)}{\Delta_{\!\gX}(t)}\,d\pi(t)}^{1/2}}\\&\ge\frac{\left(1-n^{-1}\right)\dps{\int_{\sB_{n}}R_{\!\gX}(t)}\,d\pi(t)}{\dps{\int_{\sB_{n}}}R_{\!\gX}(t)\Lambda_{\!\gX}(t)\,d\pi(t)}\\&=\left(1-n^{-1}\right)
\left[\int_{\sB_{n}}\Lambda_{\!\gX}(t)\frac{R_{\!\gX}(t)}{\dps{\int_{\sB_{n}}R_{\!\gX}(t)}\,d\pi(t)}\,d\pi(t)\right]^{-1}\\&=\left(1-n^{-1}\right)\left[1+\int_{\sB_{n}}\left(\Lambda_{\!\gX}(t)-1\right)\frac{R_{\!\gX}(t)}{\dps{\int_{\sB_{n}}\!R_{\!\gX}(t)}\,d\pi(t)}\,d\pi(t)\right]^{-1}\\&\ge
\left(1-n^{-1}\right)\left[1-\int_{\sB_{n}}\left(\Lambda_{\!\gX}(t)-1\right)\frac{R_{\!\gX}(t)}{\dps{\int_{\sB_{n}}\!R_{\!\gX}(t)}\,d\pi(t)}\,d\pi(t)\right]\\&\ge1-n^{-1}-\int_{\sB_{n}}\left(\Lambda_{\!\gX}(t)-1\right)\frac{R_{\!\gX}(t)}{\dps{\int_{\sB_{n}}\!R_{\!\gX}(t)}\,d\pi(t)}\,d\pi(t),
\end{align*}
since $\Lambda_{\!\gX}(t)\ge1$. Finally,
\begin{equation}
h^{2}\pa{\pi_{\!\gX}^{L}, \pi_{\!\gX}}\le n^{-1}+\int_{\sB_{n}}\left(\Lambda_{\!\gX}(t)-1\right)\frac{R_{\!\gX}(t)}{\dps{\int_{\sB_{n}}\!R_{\!\gX}(t)}\,d\pi(t)}\,d\pi(t).
\label{eq-maj010}
\end{equation}
It follows from the triangle inequality and the definition~\eref{eq-rn} of $B'$ that, for $\gtheta\in B\subset B'$ and $\gtheta'\in B'$, $h(t_{\gtheta},t_{\gtheta'})\le 2\kappa_{6}\lambda_{n}/\sqrt{n}$, hence by Assumption~\ref{A-MLE}-$iii)$ and (\ref{metric-h}),
\[
\norm{\sqrt{t_{\gtheta'}\over t_{\gtheta}}-1}_{\infty}=\norm{\sqrt{t_{\gtheta'}\over t_{\gtheta}}-\sqrt{t_{\gtheta}\over t_{\gtheta}}}_{\infty}\le A_{1}|\gtheta-\gtheta'|\le \frac{A_{1}}{A_{2}}h(t_{\gtheta},t_{\gtheta'})\le\frac{2A_{1}\kappa_{6}\lambda_{n}}{A_{2}\sqrt{n}}.
\]
This implies that, if $n_{1}$ is large enough, $1/2\le\norm{\sqrt{t_{\gtheta'}/t_{\gtheta}}}_{\infty}\le2$. Moreover, since $\beta=4$ and, by (\ref{eq-philog}), 
\[
\left|\beta\psi\pa{\sqrt{x}}-2\log\pa{\sqrt{x}}\right|=|\varphi(x)-\log x|\le 0.055 |x-1|^{3}\,\mbox{ for }1/2\le x\le2,
\]
we derive from (\ref{Eq-sup}) that, for all $\gtheta\in B$,
\begin{align*}
\ab{\log\left(\st\Delta_{\!\gX}(t_{\gtheta})\right)}&=\ab{\beta \sup_{\gtheta'\in B'}\gPsi(\gX,t_{\gtheta},t_{\gtheta'})- \sup_{\gtheta'\in B'}\gL(\gX,t_{\gtheta},t_{\gtheta'})}\\&\le
\sup_{\gtheta'\in B'}\ab{\beta\gPsi(\gX,t_{\gtheta},t_{\gtheta'})-\gL(\gX,t_{\gtheta},t_{\gtheta'})}\\&=\sup_{\gtheta'\in B'}\ab{\sum_{i=1}^{n}\left[\beta\psi\pa{\sqrt{t_{\gtheta'}\over t_{\gtheta}}(X_{i})}-2\log\pa{\sqrt{t_{\gtheta'}\over t_{\gtheta}}(X_{i})}\right]}\\&\le 0.055n\norm{\sqrt{t_{\gtheta'}\over t_{\gtheta}}-1}_{\infty}^{3}\le0.055n\left(\frac{2A_{1}\kappa_{6}\lambda_{n}}{A_{2}\sqrt{n}}\right)^{3}.
\end{align*}
This implies that $\log\left(\st\Lambda_{\!\gX}(t)\right)\le\kappa_{7}\lambda^{3}_{n}/\sqrt{n}$ for some constant $\kappa_{7}\ge1$. If $n_{1}$ is large enough, $\kappa_{7}\lambda_{n}^{3}/\sqrt{n}\le5/4$, hence, since $e^{x}-1<2x$ for $0\le x\le5/4$, $\Lambda_{\!\gX}(t)-1\le2\kappa_{7}\lambda^{3}_{n}/\sqrt{n}$. Using this bound, \eref{eq-maj010} implies that, for $n\ge n_{1}$ and $\omega\in\widetilde{\Omega}_{\xi}$ with $\P_{\gs}\left[\widetilde{\Omega}_{\xi}\right]1-4e^{-\xi}$,
\[
h^{2}\!\pa{\pi_{\!\gX(\omega)}^{L}, \gpi_{\!\gX(\omega)}}\!\le\frac{1}{n}+2\kappa_{7}\frac{\lambda_{n}^{3}}{\sqrt{n}}=\frac{1}{n}+\frac{2\kappa_{7}[\xi+\log n]^{3/2}}{\sqrt{n}}\le(6\kappa_{7}+1)\frac{[\log n]^{3/2}}{\sqrt{n}}
\]
since $\log n\ge\log n_{1}\ge\xi$. The conclusion follows by setting $\xi=2\log 2+z$ and $C(z)=6\kappa_{7}+1$.

\section{Other proofs}

\subsection{Proof of Propositions~\ref{prop-fc} and \ref{maj-epss}}
let us begin with the following elementary result:
%
\begin{lem}\label{lem-trivial}
If $H$, $\alpha$ and $\gamma$ are positive numbers and
\begin{equation}
\gw^{\overline \gS}(\gs,y)\le\alpha H+\gamma y\sqrt{nH}\quad\text{for all }y\ge y_{0},
\label{eq-}
\end{equation}
then $\eps_{n}^{\overline{S}}(\gs)\le\left[(c_{0}\gamma/6)\sqrt{H/n}\right]\bigvee\left[1/\sqrt{n}\right]\bigvee y_{0}$.
\end{lem}
%
\begin{proof}
In view of (\ref{def-en}), $\eps_{n}^{\overline{S}}(\gs)\le \overline{y}$ if, for all $y\ge\overline{y}\ge n^{-1/2}$, $\gw^{\overline \gS}(\gs,y)\le6c_{0}^{-1}ny^{2}$ which holds if $y\ge\left[1/\sqrt{n}\right]\bigvee y_{0}$ and $\alpha H+\gamma y\sqrt{H}\le6c_{0}^{-1}ny^{2}$. Finally this last inequality is clearly satisfied if 
\[
y\ge\frac{c_{0}\gamma}{12}\sqrt{\frac{H}{n}}\left[1+\sqrt{1+\frac{24\alpha}{c_{0}\gamma^{2}H}}\right]>\frac{c_{0}\gamma}{6}\sqrt{\frac{H}{n}}.\qedhere
\]
\end{proof}
Setting $\sB_{y}=\sB^{\overline{S}}(\gs,y)$, let us apply Proposition~50 of BBS with $T=\sB_{y}\times\sB_{y}$ for some positive $y$, so that $\log_{+}(2|T|)=H_{y}=\log\pa{2\left|\sB_{y}\right|^{2}}$ and $U_{i,(t,t')}=\psi\pa{\sqrt{\left(t_{i}'/t_{i}\right)(X_{i})}}\in [-1,1]$. One may therefore take, in Proposition~50 of BBS, $b=1$ and $v^{2}=2a_{2}^{2}ny^{2}$ with $a_{2}^{2}=3\sqrt{2}$ by~\eref{eq-var-psi}, which leads to $\gw^{\overline \gS}(\gs,y)\le bH_{y}+v\sqrt{2H_{y}}=H_{y}+2a_{2}y\sqrt{nH_{y}}$ for all $y>0$. 

For Proposition~\ref{prop-fc}, $\gw^{\overline \gS}(\gs,y)\le H+2a_{2}y\sqrt{nH}$ with $H=\log\pa{2\left|\overline S\right|^{2}}$ and Lemma~\ref{lem-trivial} leads to $\eps_{n}^{\overline{S}}(\gs)\le\left[(c_{0}\gamma/6)\sqrt{H/n}\right]$ since $H=\log\pa{2\left|\overline S\right|^{2}}\ge\log2$. The conclusion of Proposition~\ref{prop-fc} follows since $\eps_{n}^{\overline{S}}(\gs)\le\sqrt{c_{0}/3}$.

Let us now turn to the proof of Proposition~\ref{maj-epss}. In this case, it follows from (\ref{def-mddim1}) and (\ref{eq-eps0}) that, for $y\ge2\varepsilon$
\[
\left|\sB_{y}\right|\le\exp\left[(y/\varepsilon)^{2}D(\varepsilon)\right]\le\exp\left[n(y/c_{0})^{2}\right]
\]
so that $H_{y}\le2n(y/c_{0})^{2}+\log2$. Since $n(\varepsilon/c_{0})^{2}\ge D(\varepsilon)\ge3/4$, $2n(y/c_{0})^{2}\ge8n(\varepsilon/c_{0})^{2}\ge6$ and $H(y)\le2n(y/c_{0})^{2}[1+(1/6)\log2]<2.116n(y/c_{0})^{2}$. It follows that 
\[
H_{y}+2a_{2}y\sqrt{nH_{y}}<ny^{2}c_{0}^{-1}\left[(2.116/c_{0})+2a_{2}\sqrt{2.116}\right]<6c_{0}^{-1}ny^{2}
\]
for all $y\ge2\varepsilon$, hence the result.

\subsection{Proof of Proposition~\ref{prop-densite}}
Let $S_{n}=S_{\eps_{n}}$ and let $\pi$ be the uniform distribution on $S_{n}$. For all $\eps>0$, $S_{n}$ is obviously an $\eps$-net for itself and 
\[
\log \ab{\ac{t\in S_{n},\ h(s,t)\le r}}\le \log\ab{S_{n}}\le H(\eps_{n})\le D_{n}(r/\varepsilon)^{2} \text{ for all }r\ge2\varepsilon,
\]
with $D_{n}=H(\eps_{n})/4$. Hence $S_{n}$ has metric dimension bounded by $D_{n}$. Since $S_{n}$ is finite, we may endow $S_{n}$ with the $\sigma$-algebra generated by all its parts so that Assumption~\ref{hypo-mes} is clearly satisfied with $\overline S=S=S_{n}$. It follows from~\eref{def-epsn} that  $\sqrt{n}\eps_{n}>1/2$ and $D_{n}\le10^{-6}n\eps_{n}^{2}=\min\{c_{0}^{-2},\gamma/4\}n\eps_{n}^{2}$. Hence $\eps_{n}^{S_{n}}(\gs)\le 2\eps_{n}$ by Proposition~\ref{maj-epss} and $\eta_{n}^{S_{n},\pi}(t)\le \eps_{n}$ for all $t\in S_{n}$ by Proposition~\ref{def-etaRes}. Finally, since $S_{n}$ is an $\eps_{n}$-net for $\overline S$, $\inf_{t\in S_{n}}h(\gs,t)=\inf_{t\in \overline S}h(\gs,t)+\eps_{n}$ and the conclusion follows from Theorem~\ref{thm-main} applied to the density model $S_{n}$ endowed with the prior $\pi$. 
%

\subsection{Proof of Proposition~\ref{prop-reg}}
In this framework, the data $X_{1},\ldots,X_{n}$ are i.i.d.\ with density $s=p_{f\et}$ with respect to $\mu$ and $p_{f\et}(w,y)=p(y-f\et(w))$. 

Since the Lebesgue measure is translation invariant we derive that
\begin{equation}
h^{2}(s,q_{f\et})=\int_{\sW}h^{2}\pa{p_{f\et(w)},q_{f\et(w)}}dP_{W}(w)=h^{2}\pa{p,q}
\label{eq-hsq}
\end{equation}
and, for all measurable functions $f$ and $g$ on $\sW$,  
\[
h^{2}(q_{f},q_{g})=\int_{\sW}h^{2}\pa{q_{f(w)},q_{g(w)}}dP_{W}(w)=\int_{\sW}h^{2}\pa{q_{f(w)-g(w)},q}dP_{W}(w).
\]
It then follows from \eref{regul} by integration with respect to $P_{W}$ that
\begin{equation}\label{eq-hell-Lp}
h^{2}(q_{f},q_{g})\le a\norm{f-g}_{1+\alpha}^{1+\alpha}\le a\norm{f-g}_{\infty}^{1+\alpha}
\end{equation}
and, if $f$ and $g$ are bounded by $B>0$, $\ab{f(w)-g(w)}/(2B)\le1$, hence
\begin{align*}
\frac{a^{-1}\norm{f-g}_{1+\alpha}^{1+\alpha}}{[a(2B)^{1+\alpha}]\vee 1}&=a^{-1}\int_{\sW}\frac{\ab{f(w)-g(w)}^{1+\alpha}}{[a(2B)^{1+\alpha}]\vee 1}dP_{W}(w)\\
&\le\int_{\sW}a^{-1}\cro{\ab{f(w)-g(w)}^{1+\alpha}\wedge a^{-1}}dP_{W}(w)\\
&\le \int_{\sW}h^{2}\pa{q_{f(w)},q_{g(w)}}dP_{W}(w)=h^{2}(q_{f},q_{g}).
\end{align*}
Finally
\begin{equation}
h(q_{f},q_{g})\ge C_{0}\norm{f-g}_{1+\alpha}^{(1+\alpha)/2}\quad\text{with}\quad
C_{0}=a^{-1}\left(\left[a(2B)^{1+\alpha}\right]\vee 1\right)^{-1}.
\label{eq-hsq'}
\end{equation}

Let $\delta_{n}=\left(\eps_{n}^{2}/a\right)^{1/(1+\alpha)}$ with $\eps_{n}$ satisfying \eref{eq-epsreg}, $\cF_{n}$ be a $\delta_{n}$-net for $\overline \cF$ (in $\norm{\cdot}_{\infty}$-norm, therefore in $\norm{\cdot}_{1+\alpha}$-norm) of cardinality bounded by $\exp[H(\delta_{n})]$, $\pi'$ the uniform distribution on $\cF_{n}$, $S_{n}=\{q_{f},\; f\in \cF_{n}\}$ and $\pi$ be the uniform distribution on $S_{n}$. Inequality \eref{eq-hell-Lp} implies that $S_{n}$ is an $\eps_{n}$-net for $\overline S$ with respect to the Hellinger distance which satisfies 
\[
\log\ab{S_{n}}=\log\ab{\cF_{n}}\le H(\delta_{n})=H\cro{\left(\eps_{n}^{2}/a\right)^{1/(1+\alpha)}}
\le \left(4\cdot 10^{-6}\right)n\eps_{n}^{2}.
\]
Arguing as in the proof of  Proposition~\ref{prop-densite} we derive that the $\rho$-posterior $\pi_{\bsX}$ satisfies with a probability at least $1-e^{-\xi}$,
\begin{equation}
\pi_{\!\gX(\omega)}\pa{\sB^{\overline S}(s,C\overline r_{n})}\ge 1-e^{-\xi'}\quad \text{with}\quad \overline r_{n}=h(s,\overline{S})+\eps_{n}+\sqrt{\frac{\xi+\xi'}{n}}
\label{eq-P2a}
\end{equation}
and $C$ depending on $c_{j}$ for $1\le j\le4$. By (\ref{eq-hsq}) and \eref{eq-hell-Lp},
\[
h(s,q_{f})\le h(s,q_{f\et})+h(q_{f\et},q_{f})\le h(p,q)+a\norm{f\et-f}_{1+\alpha}^{(1+\alpha)/2},
\]
hence
\begin{equation}
h(s,\overline S)=\inf_{f\in\overline{\cF}}h(s,q_{f})\le h(p,q)+a\inf_{f\in\overline{\cF}}\norm{f\et-f}_{1+\alpha}^{(1+\alpha)/2}.
\label{eq-P2b}
\end{equation}
Similarly, by (\ref{eq-hsq}) and (\ref{eq-hsq'}),
\[
h(s,q_{f})\ge h(q_{f\et},q_{f})-h(s,q_{f\et})\ge C_{0}\norm{f\et-f}_{1+\alpha}^{(1+\alpha)/2}-h(p,q),
\]
hence, for all $r>0$,
\begin{equation}
\ac{f\in \overline \cF,\; h(s,q_{f})\le r }\subset \ac{f\in \overline \cF,\; \norm{f\et-f}_{1+\alpha}^{(1+\alpha)/2}\le \frac{h(p,q)+r}{C_{0}}}
\label{eq-P2c}
\end{equation}
and the conclusion follows by putting (\ref{eq-P2a}), (\ref{eq-P2b}) and (\ref{eq-P2c}) together.

\subsection{Proof of Proposition~\ref{prop-VC}}
Since $\sF$ is weak VC-major with dimension $d$, 
\[
\sF_{y}=\left\{\left.\psi\left(\sqrt{t'\over t}\right)\,\right|\, (t,t')\in\sB^{\overline{S}}(\gs,y)\times\sB^{\overline{S}}(\gs,y)\right\}
\]
is also weak VC-major with dimension not larger than $d$ as a subset of $\sF$. Applying Corollary~1 in Baraud~\citeyearpar{Bar2016} to the class of functions $\sF_{y}$ and the (independent) random variables $X_{i}$, we may take $b=1$ (since $\psi$ is bounded by 1 and $n\sigma^{2}=(2a_{2}^{2}y^{2})\wedge n$ with $a_{2}^{2}=3\sqrt{2}$ by~\eref{eq-var-psi} and the definition of $\sB^{\overline{S}}(\gs,y)$. We derive from this corollary that
\[
\gw^{\overline{S}}(\gs,y)\le 8\left[a_{2}\sqrt{n}y\log\!\pa{e\left[{1\over a_{2}\sqrt{2}y}\vee 1 \right]}\sqrt{\overline \Gamma(d)}+2\overline \Gamma(d)\right]\quad\mbox{for all }y>0,
\]
with
\[
\overline \Gamma(d)=\log 2+(d\wedge n)L(d)\qquad\mbox{and}\qquad L(d)=\log\pa{\st(en)/(d\wedge n)}\ge1.
\]
Since $(d\wedge n)L(d)$ is a nondecreasing function of $d$ it is not smaller than $L(1)=\log(en)$ so that $\overline \Gamma(d)\le \overline{c}_{n}(d\wedge n)L(d)$, with $\overline{c}_{n}$ given by (\ref{eq-constantes}). Moreover, if $y\ge y_{0}=(d\wedge n)\left(na_{2}\sqrt{2}\right)^{-1}$, which we shall now assume, then
\[
\log\!\left(e\left[{1\over a_{2}\sqrt{2}y}\vee 1 \right]\right)\le L(d)\mbox{ hence }
\gw^{\overline{S}}(\gs,y)\le 8\!\left[a_{2}yL(d)\sqrt{n\overline \Gamma(d)}+2\overline \Gamma(d)\right].
\]
It then follows from Lemma~\ref{lem-trivial} with $\overline{H}=\overline \Gamma(d)$, $\alpha=16$ and $\gamma=8a_{2}L(d)$ that 
\[
\eps_{n}^{\overline{S}}(\gs)\le(4/3)c_{0}a_{2}L(d)\sqrt{\overline \Gamma(d)/n}\le
(4/3)c_{0}a_{2}L(d)^{3/2}\sqrt{\overline{c}_{n}(d\wedge n)/n},
\]
an upper bound which is larger than $\left[1/\sqrt{n}\right]\bigvee y_{0}$ since $L(d)\ge\log(en)$
and $y_{0}<\sqrt{\overline{c}_{n}(d\wedge n)/n}$. The conclusion follows.

\subsection{Proof of Proposition~\ref{prop-exp0}}
For all $t$ and $t'$ in $S$, the ratio $\sqrt{t'/t}$ is still of the form~\eref{eq-modelt0} and, since $\psi$ is monotone, by Proposition~3 of Baraud~\citeyearpar{Bar2016}, it suffices to prove that the class of functions of the form~\eref{eq-modelt0} is weak VC-major with dimension not larger than $d=J+1$ (or equivalently of index not larger than $J+2$). It is in fact VC with dimension not larger than $J+1$ by Proposition~12-$(i)$ of BB . Hence, it is weak VC-major with dimension not larger than $d=J+1$ by Proposition~1 in Baraud~\citeyearpar{Bar2016}). This proves the first part of the proposition. For the second part, the arguments are the same except that we now use  Proposition~12-$(ii)$ of BB.

\subsection{Proof of Proposition~\ref{prop-exp}}
For all $t$ and $t'$ in $S$, the ratio $\sqrt{t'/t}$ is still of the form~\eref{eq-modelt} where 
\[
\sJ=\{I\cap I'\,|\, I\in\sJ(t),\ I'\in \sJ(t')\}
\]
is now a partition of $\gI$ into at most $2k$ intervals. Since $\psi$ is monotone, by Proposition~3 of Baraud~\citeyearpar{Bar2016}, it suffices to prove that the class of such functions is weak VC-major with dimension not larger than $d=\lceil18.8k(J+2)\rceil$. This follows from Proposition~12-$(ii)$ of BB applied with $k$ replaced by $2k$.

\subsection{Proof of Proposition~\ref{casconv}}
Let $r>0$ and $\gtheta\in \bs{\Theta}$. It follows from \eref{eq-conEh} that, on the one hand,
\begin{align*}
\gpi\pa{\sB^{\overline{S}}(t_{\gtheta},r)}&=\nu\pa{\ac{\gtheta'\in \bs{\Theta}\,\left| \,h(t_{\gtheta},t_{\gtheta'})\le r\right.}}\\
&\ge \nu\pa{\ac{\gtheta'\in \bs{\Theta}\,\left| \,\overline a\ab{\gtheta-\gtheta'}_{*}^{\alpha}\le r\right.}}\\
&= \nu\pa{\ac{\gtheta+\gv\,\left| \,\gv\in \R^{d}\mbox{ and }\ab{\gv}_{*}\le(r/\overline{a})^{1/\alpha}\right.}\bigcap \bs{\Theta}}\\
&= \nu\pa{\cB_{*}\pa{\gtheta,({r/\overline{a}})^{1/\alpha}}\bigcap \bs{\Theta}}=\nu\pa{\cB_{*}\pa{\gtheta,(r/\overline{a})^{1/\alpha}}},
\end{align*}
since $\nu$ is supported by $\bs{\Theta}$. On the other hand, by the same arguments,
\begin{align*}
\gpi\pa{\sB^{\overline{S}}(t_{\gtheta},2r)}
&=\nu\pa{\ac{\gtheta'\in \bs{\Theta}\,\left| \,h(t_{\gtheta},t_{\gtheta'})\le 2r\right.}}\\&\le \nu\pa{\ac{\gtheta'\in \bs{\Theta}\,\left| \,\underline a\ab{\gtheta-\gtheta'}_{*}^{\alpha}\le2r\right.}}\\
&\le \nu\pa{\ac{\gtheta+\gv\,\left| \,\gv\in \R^{d}\mbox{ and }\ab{\gv}_{*}\le\frac{2r}{\underline{a}})^{1/\alpha}\right.}}\\&=\nu\pa{\cB_{*}\pa{\gtheta,(2r/\underline{a})^{1/\alpha}}}.
\end{align*}
Let $k\in\N$ be such that
\[
2^{k-1}<\left(\frac{2\overline{a}}{\underline{a}}\right)^{1/\alpha}\le2^{k}\quad\mbox{so that}\quad k<\delta=\frac{1}{\alpha\log 2}\log\left(\frac{2\overline{a}}{\underline{a}}\right)+1.
\]
It then follows from an iterative application of (\ref{eq-hypnu}) that 
\begin{align*}
{\gpi\pa{\sB^{\overline{S}}(t_{\gtheta},2r)}\over \gpi\pa{\sB^{\overline{S}}(t_{\gtheta},r)}}
&\le \frac{\nu\pa{\cB_{*}\pa{\gtheta,(2r/\underline{a})^{1/\alpha}}}}
{\nu\pa{\cB_{*}\pa{\gtheta,(r/\overline{a})^{1/\alpha}}}}
\le\frac{\nu\pa{\cB_{*}\pa{\gtheta,2^{k}(r/\overline{a})^{1/\alpha}}}}
{\nu\pa{\cB_{*}\pa{\gtheta,(r/\overline{a})^{1/\alpha}}}}\\
&\le\kappa_{\gtheta}^{k}[(r/\overline a)^{1/\alpha}]
\le\exp\left[\delta\log\left[\kappa_{\gtheta}\pa{(r/\overline a)^{1/\alpha}}\right]\right]
\end{align*}
since the function $\kappa_{\gtheta}$ is non-increasing. In order to majorize
the value of $\eta_{n}^{\overline{S},\gpi}(t_{\gtheta})$ we have to find, according to (\ref{Eq-eta}), the minimal value of $\eta$ such that $\gamma n r^{2}\ge\delta\log\left[\kappa_{\gtheta}\pa{(r/\overline a)^{1/\alpha}}\right]$ for all $r\ge\eta$, or equivalently
\[
\gamma n\eta^{2}\ge\log\left[\kappa_{\gtheta}\pa{(\eta/\overline a)^{1/\alpha}}\right]\left[\frac{1}{\alpha\log 2}\log\left(\frac{2\overline{a}}{\underline{a}}\right)+1\right],
\]
since $\kappa_{\gtheta}$ is non-increasing. This leads to (\ref{eq-eta-param0}) and (\ref{eq-eta-param9}). 
Finally (\ref{eq-eta-param}) follows from an application of the next lemma with $\cB=\cB_{*}(0,1)$, $r_{2}=2r$ and $r_{1}=r$ which implies that $\kappa_{0}=2^{d}(\overline b/\underline b)$.
%
\begin{lem}\label{lem-star}
If $\bs{\Theta}$ is convex and (\ref{eq-densborn}) holds, then, for all $\gtheta\in\bs{\Theta}$ and all measurable subsets $\cB$ of $\R^d$,
\[
\nu(\gtheta+r_2\cB)\le\left(r_2/r_1\right)^d(\overline b/\underline b)\,\nu(\gtheta+r_1\cB)\quad\mbox{for all }r_2>r_1>0.
\]
\end{lem}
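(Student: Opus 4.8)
\textbf{Proof of Lemma~\ref{lem-star} (plan).}
The plan is to exploit the affine contraction toward the centre $\gtheta$ together with the convexity of $\gTheta$. Fix $\gtheta\in\gTheta$, a measurable set $\cB\subset\R^{d}$ and $r_{2}>r_{1}>0$, and consider the map $T\colon x\mapsto \gtheta+(r_{1}/r_{2})(x-\gtheta)$, which we can rewrite as $T(x)=(1-r_{1}/r_{2})\gtheta+(r_{1}/r_{2})x$. Since $r_{1}/r_{2}\in(0,1)$, $T(x)$ is a convex combination of $\gtheta\in\gTheta$ and $x$, so convexity of $\gTheta$ gives $T(\gTheta)\subset\gTheta$. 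Moreover, if $x=\gtheta+r_{2}b$ with $b\in\cB$, then $T(x)=\gtheta+r_{1}b\in\gtheta+r_{1}\cB$; hence $T$ maps $(\gtheta+r_{2}\cB)\cap\gTheta$ into $(\gtheta+r_{1}\cB)\cap\gTheta$. Finally $T$ is an affine bijection with linear part $(r_{1}/r_{2})\mathrm{Id}$, so it sends measurable sets to measurable sets and multiplies the Lebesgue measure by the constant Jacobian $(r_{1}/r_{2})^{d}$.

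With these three facts in hand I would conclude as follows. Because $\nu$ has density $g$ with respect to $\lambda$ and $g$ vanishes (a.e.) outside $\gTheta$, one has $\underline b\,\lambda(A\cap\gTheta)\le\nu(A)\le\overline b\,\lambda(A\cap\gTheta)$ for every measurable $A\subset\R^{d}$, using \eref{eq-densborn}. Applying the upper bound to $A=\gtheta+r_{2}\cB$ and the lower bound to $A=\gtheta+r_{1}\cB$, then using the inclusion $T\bigl((\gtheta+r_{2}\cB)\cap\gTheta\bigr)\subset(\gtheta+r_{1}\cB)\cap\gTheta$ and the Jacobian computation $\lambda\bigl(T\bigl((\gtheta+r_{2}\cB)\cap\gTheta\bigr)\bigr)=(r_{1}/r_{2})^{d}\,\lambda\bigl((\gtheta+r_{2}\cB)\cap\gTheta\bigr)$, one gets
\[
\nu(\gtheta+r_{2}\cB)\le\overline b\,\lambda\bigl((\gtheta+r_{2}\cB)\cap\gTheta\bigr)
=\overline b\,(r_{2}/r_{1})^{d}\,\lambda\Bigl(T\bigl((\gtheta+r_{2}\cB)\cap\gTheta\bigr)\Bigr)
\le\overline b\,(r_{2}/r_{1})^{d}\,\lambda\bigl((\gtheta+r_{1}\cB)\cap\gTheta\bigr)
\le\frac{\overline b}{\underline b}\,(r_{2}/r_{1})^{d}\,\nu(\gtheta+r_{1}\cB),
\]
which is the claimed inequality.

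There is no serious obstacle here; the only point deserving a line of care is the measurability bookkeeping — checking that $(\gtheta+r_{i}\cB)\cap\gTheta$ and its image under $T$ are Lebesgue measurable (immediate, since $\gTheta$ is measurable, $\cB$ is measurable by hypothesis, and $T$ is an affine homeomorphism) and that the elementary two-sided estimate $\underline b\,\lambda(\cdot\cap\gTheta)\le\nu(\cdot)\le\overline b\,\lambda(\cdot\cap\gTheta)$ is legitimate given that \eref{eq-densborn} only controls $g$ on $\gTheta$ up to a $\lambda$-null set. The convexity hypothesis is used exactly once, to guarantee $T(\gTheta)\subset\gTheta$, which is what makes the contraction argument close.
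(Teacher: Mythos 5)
Your proof is correct and follows essentially the same route as the paper's: both define the contraction $T\colon x\mapsto(1-r_{1}/r_{2})\gtheta+(r_{1}/r_{2})x$ (the paper works in the translated variable $\gu=x-\gtheta$ but the map is the same), use convexity of $\gTheta$ to keep the image inside $\gTheta$, compute the Jacobian $(r_{1}/r_{2})^{d}$, and then close with the two-sided density bound $\underline b\,\lambda(\cdot\cap\gTheta)\le\nu(\cdot)\le\overline b\,\lambda(\cdot\cap\gTheta)$. No gap; the bookkeeping you flag at the end is exactly the one the paper implicitly handles by working with the sets $\gB_{i}=\{\gu\in r_{i}\cB:\gtheta+\gu\in\gTheta\}$.
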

%
\begin{proof}
Let us fix some $\gtheta\in \bs{\Theta}$ and set
\[
\gB_1=\left\{\left.\gu\in r_1\cB\,\right|\,\gtheta+\gu\in\bs{\Theta}\right\}\qquad\mbox{and}\qquad\gB_2=\left\{\left.\gu\in r_2\cB\,\right|\,\gtheta+\gu\in\bs{\Theta}\right\}.
\]
We have to prove that $\nu(\gtheta+\gB_2)\le(r_2/r_1)^{d}(\overline b/\underline b) \nu(\gtheta+\gB_1)$.
Since  $\gtheta+\gB_1$ and $\gtheta+\gB_2$ are two subsets of $\bs{\Theta}$,
\[
\nu(\gtheta+\gB_2)\le \overline b\lambda(\gtheta+\gB_2)\qquad\mbox{and}\qquad
\nu(\gtheta+\gB_1)\ge \underline b\lambda(\gtheta+\gB_1).
\]
It is therefore enough to show that $\lambda(\gtheta+\gB_2)\le(r_2/r_1)^d\lambda(\gtheta+\gB_1)$ or, by translation invariance of the Lebesgue measure, that $\lambda(\gB_2)\le(r_2/r_1)^d\lambda(\gB_1)$. If $\gu\in \gB_2$, then $\gv=(r_1/r_2)\gu\in r_1\cB$ and, since $\bs{\Theta}$ is convex,
\[
\gtheta+\gv=\left(1-\frac{r_1}{r_2}\right)\gtheta+\frac{r_1}{r_2}(\gtheta+\gu)\in\bs{\Theta},
\]
so that $\gv\in \gB_1$. This implies that $\gB_2\subset\{\gu\,|\,(r_1/r_2)\gu\in \gB_1\}$ so that
\begin{align*}
\lambda(\gB_2)&=\int\1_{\gB_2}(u)\,d\lambda(\gu)\le\int\1_{\gB_1}\left(\frac{r_1}{r_2}\gu\st\right)d\lambda(\gu)\\&=\left(\frac{r_2}{r_1}\right)^d\!\int\1_{\gB_1}(\gv)\,d\lambda(\gv)=\left(\frac{r_2}{r_1}\right)^d\!\lambda(\gB_1),
\end{align*}
which concludes our proof.
\end{proof}
%

\subsection{Proof of Proposition~\ref{prop-nu1/nu2}}
By symmetry, we may assume that $\theta\ge0$ and note that, since $\nu$ is supported by $[-1,1]$, $\nu([\theta-x,\theta+x])=1$ if $x\ge\theta+1$, in which case $\nu(I_{2})=\nu(I_{1})=1$. We therefore only consider the case of $0<x<\theta+1$. Since $g$ is decreasing on $[0,1]$,
\[
\nu([\theta,\theta+2x])=\int_{\theta}^{(\theta+2x)\wedge1}g(t)\,dt\le2\int_{\theta}^{(\theta+x)\wedge1}g(t)\,dt=2\nu([\theta,\theta+x]).
\]
Now observe that 
\[
2\nu([\theta-z,\theta]) =
\begin{cases}
\dps{\int_{(\theta-z)}^{0}\xi(-t)^{\xi-1}\,dt+\int_{0}^{\theta}\xi t^{\xi-1}\,dt}=(z-\theta)^{\xi}+\theta^{\xi}&\!\!\text{if }z\ge\theta\\
\dps{\int_{(\theta-z)}^{\theta}\xi t^{\xi-1}\,dt}=\theta^{\xi}-(\theta-z)^{\xi}&\!\!\mbox{if }z\le\theta
\end{cases}
\]
so that,
\[
2\theta^{-\xi}\nu([\theta-z,\theta])=\left\{
\begin{array}{ll}  
1+[(z/\theta)-1]^{\xi}&\;\mbox{ if }z\ge\theta\\1-[1-(z/\theta)]^{\xi}&\;\mbox{ if }z\le\theta
\end{array}\right..
\]
Let now set $y=x/\theta$.\\
--- If $x\ge\theta$, then $y\ge1$ and, since the function $y\mapsto(y-1)^{\xi}$ is concave,
\[
\frac{\nu([\theta-2x,\theta])}{\nu([\theta-x,\theta])}-1=\frac{1+(2y-1)^{\xi}}{1+(y-1)^{\xi}}-1=
\frac{(2y-1)^{\xi}-(y-1)^{\xi}}{1+(y-1)^{\xi}}\le\frac{y^{\xi}}{1+(y-1)^{\xi}}.
\]
If $y\le2$, the right-hand side is bounded by $2^{\xi}$ and when $y>2$, $y/(y-1)<2$ so that the same bound holds, which allows us to conclude that in this case 
\[
\frac{\nu([\theta-2x,\theta])}{\nu([\theta-x,\theta])}\le 1+2^{\xi}.
\]
--- If $0<x<\theta/2$, then $0<y<1/2$
\[
\frac{\nu([\theta-2x,\theta])}{\nu([\theta-x,\theta])}=\frac{1-(1-2y)^{\xi}}{1-(1-y)^{\xi}}=1+
f(y)\ \text{with}\ f(y)=\frac{(1-y)^{\xi}-(1-2y)^{\xi}}{1-(1-y)^{\xi}}
\]
and $f'(y)=A(y)\xi\left(1-(1-y)^{\xi}\right)^{-2}$ with
\begin{align*}
A(y)&=2(1-2y)^{\xi-1}-(1-y)^{\xi-1}-2(1-y)^{\xi}(1-2y)^{\xi-1}\\
&\quad +(1-y)^{\xi-1}(1-2y)^{\xi}\\&=
2(1-y)^{\xi-1}(1-2y)^{\xi-1}\left[(1-y)^{1-\xi}-\frac{(1-2y)^{1-\xi}+1}{2}\right].
\end{align*}
The strict concavity of the function $z\mapsto z^{1-\xi}$ implies that the bracketed factor is
positive, hence $A$ and $f'$ as well so that
\[
\sup_{0<y<1/2}f(y)=\lim_{y\rightarrow1/2}f(y)=\left(2^{\xi}-1\right)^{-1}\quad\mbox{and}\quad
\frac{\nu([\theta-2x,\theta])}{\nu([\theta-x,\theta])}\le1+\frac{1}{2^{\xi}-1}.
\]
--- Finally, if $\theta/2\le x<\theta$, then $1/2\le y<1$ and
\begin{equation}
\frac{\nu([\theta-2x,\theta])}{\nu([\theta-x,\theta])}=\frac{1+(2y-1)^{\xi}}{1-(1-y)^{\xi}}<\frac{2}{1-2^{-\xi}}=\frac{2^{1+\xi}}{2^{\xi}-1}.
\label{eq-2xi}
\end{equation}
In all three cases one can check that (\ref{eq-2xi}) holds which concludes our proof since this bound is larger than 2.

\subsection{Proof of Proposition~\ref{prop-cas3}}
Let
\[
J=c_{\delta}\nu([-x,x])=2\int_{0}^{x}\exp\left[-\frac{1}{2\theta^{\delta}}\right]d\theta\quad
\mbox{for }0<x\le1.
\]
A change of variable followed by an integration by parts leads to
\begin{align*}
\frac{\delta J}{4}&=\int_{x^{-\delta/2}}^{+\infty}u\exp\left[\frac{-u^{2}}{2}\right]u^{-(2/\delta)-2}\,du
\\&=\left[-u^{-(2/\delta)-2}\exp\left[\frac{-u^{2}}{2}\right]\right]_{x^{-\delta/2}}^{+\infty}\\
&\quad-\frac{2(\delta+1)}{\delta}\int_{x^{-\delta/2}}^{+\infty}u^{-(2/\delta)-3}\exp\left[\frac{-u^{2}}{2}\right]du\\&=x^{\delta+1}\exp\left[\frac{-x^{-\delta}}{2}\right]-\frac{2(\delta+1)}{\delta}
\int_{x^{-\delta/2}}^{+\infty}u^{-(2/\delta)-3}\exp\left[\frac{-u^{2}}{2}\right]du.
\end{align*}
It follows that
\[
-\frac{2(\delta+1)}{\delta}x^{(3\delta/2)+1}\int_{x^{-\delta/2}}^{+\infty}\exp\left[\frac{-u^{2}}{2}\right]du
\le\frac{\delta J}{4}-x^{\delta+1}\exp\left[\frac{-x^{-\delta}}{2}\right]<0.
\]
Since, 
\[
\int_{z}^{+\infty}\exp\left[\frac{-u^{2}}{2}\right]du<\frac{1}{z}\exp\left[\frac{-z^{2}}{2}\right]
\quad\mbox{for }z>0,
\]
then
\[
-\frac{2(\delta+1)}{\delta}x^{2\delta+1}\exp\left[\frac{-x^{-\delta}}{2}\right]
\le\frac{\delta J}{4}-x^{\delta+1}\exp\left[\frac{-x^{-\delta}}{2}\right]<0
\]
and finally
\[
x^{\delta+1}\exp\left[\frac{-x^{-\delta}}{2}\right]\left(1-\frac{2(\delta+1)}{\delta}x^{\delta}\right)\le\frac{\delta J}{4}<x^{\delta+1}\exp\left[\frac{-x^{-\delta}}{2}\right].
\]
Let $x_{\delta}$ be such that $2\delta^{-1}(\delta+1)x_{\delta}^{\delta}=1/2$. Then, for $0<x\le x_{\delta}<1$,
\[
\frac{\nu([-2x,2x])}{\nu([-x,x])}\le\frac{(2x)^{\delta+1}\exp\left[-(2x)^{-\delta}/2\right]}
{(1/2)x^{\delta+1}\exp\left[-x^{-\delta}/2\right]}=2^{\delta+2}
\exp\left[\frac{x^{-\delta}}{2}\left(1-2^{-\delta}\right)\right].
\]
If $x_{\delta}<x\le1$, then $\nu([-2x,2x])\le1$ while $\nu([-x,x])\ge\nu([-x_{\delta},x_{\delta}])$.
It follows that in both cases (\ref{eq-hypnu}) is satisfied with
\[
\log\left(\kappa_{0}(x)\st\right)=K_{0}+\frac{x^{-\delta}}{2}\left(1-2^{-\delta}\right)\le
K_{1}x^{-\delta}\quad\mbox{for all }x\in(0,1],
\]
where $K_{0}$ and $K_{1}$ only depend on $\delta$. As a consequence, 
\[
\frac{1}{n\gamma}\log\left(\kappa_{0}([r/\overline a]^{1/\alpha})\right)\left[\frac{\log(2\overline{a}/\underline{a})}{\alpha\log 2}+1\right]
\le K_{2}r^{-\delta/\alpha}n^{-1}
\]
where $K_{2}$ depends on $\alpha,\delta,\overline{a},\underline{a}$ and $\gamma$. The conclusion follows.

\subsection{Proof of Proposition~\ref{prop-selexp}}
For all $\gtheta,\gtheta'\in \bs{\Theta}_{m}'$, let 
\[
\Delta_{m}(\gtheta,\gtheta')={1\over 2}\cro{A_{m}(\gtheta)+A_{m}(\gtheta')}-A_{m}(\overline \gtheta)\quad\mbox{with}\quad\overline \gtheta=\frac{\gtheta+\gtheta'}{2}.
\]
Under our assumption on the maps $A_{m}$,
\begin{align*}
\Delta_{m}(\gtheta,\gtheta')&= {1\over 2}\int_{0}^{1}\cro{\int_{-t}^{t}\left<A''_{m}\pa{\overline \gtheta+u{\gtheta-\gtheta'\over 2}}{\gtheta-\gtheta'\over 2},{\gtheta-\gtheta'\over 2}\right>du}dt\\
&\le {\sigma_{m}^{2}\over 8}\ab{\gtheta-\gtheta'}^{2}\le {(m+1)\sigma_{m}^{2}\over 8}\ab{\gtheta-\gtheta'}_{\infty}^{2}
\end{align*}
with $\ab{\gtheta-\gtheta'}_{\infty}=\max_{j=0,\ldots,m}\ab{\theta_{j}-\theta_{j'}}$. Hence, by~\eref{eq-rhott'} for all $\gtheta,\gtheta'\in \bs{\Theta}_{m}'$,
\begin{equation}
h^{2}(t_{\gtheta},t_{\gtheta'})=1-\exp\cro{-\Delta_{m}(\gtheta,\gtheta')}\le {(m+1)\sigma_{m}^{2}\over 8}\ab{\gtheta-\gtheta'}_{\infty}^{2}.
\label{eq-Bay1}
\end{equation}

The mapping $A_{m}$ being continuous and strictly convex on $\bs{\Theta}_{m}'$, it follows from Proposition~\ref{mes-cond-modele} that the density sets 
\[
\overline{S}_{m}'=\ac{t_{\gtheta}=\exp\cro{\<\gtheta,\gT\>-A(\gtheta)},\ \gtheta\in\bs{\Theta}_{m}'}
\]
satisfy our Assumption~\ref{hypo-mes}, for all $m\in\cM$. Besides, we may choose $S_{m}'=\left\{t_{\gtheta}\,\left|\,\gtheta\in \bs{\Theta}_{m}'\cap \Q^{\N}\right.\right\}$ as a countable and dense subset of $\overline S_{m}'$ for all $m\in\cM$ so that $S'_{m}\subset S_{m'}'$ for all $(m,m')\in\cM^{2}$ with $m<m'$. 

Let us now set  $S_{0}=S_{0}'$, $\bs{\Theta}_{0}=\bs{\Theta}_{0}'$ and for all $m\ge 1$, $S_{m}=S_{m}'\cap \overline S_{m}$  and $\bs{\Theta}_{m}=\bs{\Theta}_{m}'\setminus \bs{\Theta}_{m-1}'=\{\gtheta\in\bs{\Theta}_{m}'\,|\,\theta_{m}\ne 0\}$. In order to apply our Theorem~\ref{thm-main2}, we first need to check that Assumption~\ref{H-mod} is satisfied. 

Since $t\mapsto t(x)$ is continuous on $\overline S_{m}'$, it is also continuous on $\overline S_{m}\subset \overline S_{m}'$ for all $m\in\cM$. More generally, for all $t'\in \overline S$ and $\gx\in\sX^{n}$, the mapping $t\mapsto \gPsi(\gx,\gt,\gt')$ is continuous on $\overline S_{m}$ since $t'>0$. Consequently, the measurable spaces $(\overline S_{m},\sS_{m})_{m\in\cM}$ satisfy Assumption~\ref{H-mod}-$\ref{H'i}$ and $\ref{H'ii}$. For $m,m'\in\cM$, we may use $S_{m}\cup S_{m'}$ and $S_{m}'\cup S_{m'}'$ as countable and dense subsets of $\overline S_{m}\cup\overline  S_{m'}$ and  $\overline S_{m}'\cup\overline  S_{m'}'$ respectively in order to define the quantities $\eps_{n}^{\overline S_{m}\cup\overline S_{m'}}(\gs)$ and $\eps_{n}^{\overline S_{m}'\cup\overline S_{m'}'}(\gs)$. Using~\eref{eps-EM} and the fact that $S_{m}\cup S_{m'}\subset S_{m}'\cup S_{m'}'=S_{m\vee m'}'$, we derive that, for all $m,m'\in\cM$, 
\[
\eps_{n}^{\overline S_{m}\cup\overline S_{m'}}(\gs)\le \eps_{n}^{\overline S_{m}'\cup\overline S_{m'}'}(\gs)\le \eps_{n}^{\overline S_{m\vee m'}'}(\gs)\le \overline \eps_{m\vee m'}=\overline \eps_{m}\vee\overline \eps_{m'}\le \sqrt{\overline \eps_{m}^{2}+\overline \eps_{m'}^{2}},
\]
hence~\eref{eq-eps} is satisfied, ~\eref{eq-pen} as well by the definition of $\pen$ and consequently all the conditions of Assumption~\ref{H-mod} are satisfied. 

Let us now turn to the quantity $\overline \eta(t)$ with $t\in\overline S$.
It  follows from (\ref{eq-Bay1}) that, for all $\gtheta\in \bs{\Theta}_{m}$,
\begin{align*}
\pi_{m}\pa{\sB^{\overline{S}_{m}}(t_{\gtheta},r)}&\ge\nu^{\otimes (m+1)}\pa{\gtheta'\in \bs{\Theta}_{m}'\,\left|\,\ab{\gtheta-\gtheta'}_{\infty}^{2}\le {8r^{2}\over (m+1)\sigma_{m}^{2}},\ \theta_{m}'\neq 0\right.}\\
&=\prod_{j=0}^{m} \nu\pa{\cro{\theta_{j}-{2\sqrt{2}r\over \sigma_{m}\sqrt{m+1} },\theta_{j}+{2\sqrt{2}r\over \sigma_{m}\sqrt{m+1}}}}\\
&= \cro{{2\sqrt{2}r\over \sigma_{m}M\sqrt{m+1}}\wedge 1}^{m+1}=\cro{{8r^{2}\over \sigma_{m}^{2}M^{2}(m+1)}\wedge 1}^{(m+1)/2}.
\end{align*}
%
We therefore deduce from~\eref{eq-eta} that, for all $t\in\overline S_{m}$,
\begin{align*}
\overline \eta_{n}^{2}(t)&=\inf_{r>0}\left(c_{7}r^{2}+{1\over 2n\beta}\log {1\over \gpi_{m}(\sB(t,r))}\right)\\
&\le \inf_{r>0}\left[c_{7}r^{2}+{m+1\over 4n\beta}\log\pa{1\bigvee {(m+1)\sigma_{m}^{2}M^{2}\over 8r^{2}}}\right].
\end{align*}
Setting $r^{2}=(m+1)/(4n\beta c_{7})$ we finally get
\[
\eta_{n}^{2}(t)\le {m+1\over 4n\beta}\cro{1+\log\pa{1\vee \pa{2c_{7}\beta n\sigma_{m}^{2}M^{2}}}}.
\]
The result follows by applying Theorem~\ref{thm-main2}. 

\section*{Acknowledgements}
The first author has received funding from the European Union's Horizon 2020 research and innovation programme under grant agreement N\textsuperscript{o} 811017.

The second author was supported by the grant ANR-17-CE40-0001-01 of the French National Research Agency ANR (project BASICS) and by Laboratoire J.A. Dieudonn\'e (Nice).


\begin{thebibliography}{}

\bibitem[Atchad\'{e}, 2017]{MR3718168}
Atchad\'{e}, Y.~A. (2017).
\newblock On the contraction properties of some high-dimensional
  quasi-posterior distributions.
\newblock {\em Ann. Statist.}, 45(5):2248--2273.

\bibitem[Baraud, 2016]{Bar2016}
Baraud, Y. (2016).
\newblock Bounding the expectation of the supremum of an empirical process over
  a (weak) vc-major class.
\newblock {\em Electron. J. Statist.}, 10(2):1709--1728.

\bibitem[Baraud and Birg\'e, 2016]{MR3565484}
Baraud, Y. and Birg\'e, L. (2016).
\newblock Rho-estimators for shape restricted density estimation.
\newblock {\em Stochastic Process. Appl.}, 126(12):3888--3912.

\bibitem[Baraud and Birg{\'e}, 2017]{Baraud:2017aa}
Baraud, Y. and Birg{\'e}, L. (2017).
\newblock Robust bayes-like estimation: Rho-bayes estimation.
\newblock Technical report, arXiv:1711.08328v1.

\bibitem[Baraud and Birg\'e, 2018]{BarBir2018}
Baraud, Y. and Birg\'e, L. (2018).
\newblock Rho-estimators revisited: General theory and applications.
\newblock {\em Ann. Statist.}, 46(6B):3767--3804.

\bibitem[Baraud and Birg\'{e}, 2020]{supp-bayes}
Baraud, Y. and Birg\'{e}, L. (2020).
\newblock Supplement to ``{R}obust {B}ayes-like estimation: Rho-{B}ayes
  estimation.'' {DOI: }.

\bibitem[Baraud et~al., 2017]{MR3595933}
Baraud, Y., Birg{\'e}, L., and Sart, M. (2017).
\newblock A new method for estimation and model selection: {$\rho$}-estimation.
\newblock {\em Invent. Math.}, 207(2):425--517.

\bibitem[Bhattacharya et~al., 2019]{MR3909926}
Bhattacharya, A., Pati, D., and Yang, Y. (2019).
\newblock Bayesian fractional posteriors.
\newblock {\em Ann. Statist.}, 47(1):39--66.

\bibitem[Birg{\'e}, 1983]{MR722129}
Birg{\'e}, L. (1983).
\newblock Approximation dans les espaces m\'etriques et th\'eorie de
  l'estimation.
\newblock {\em Z. Wahrsch. Verw. Gebiete}, 65(2):181--237.

\bibitem[Birg{\'e}, 1984]{MR762855}
Birg{\'e}, L. (1984).
\newblock Stabilit\'e et instabilit\'e du risque minimax pour des variables
  ind\'ependantes \'equidistribu\'ees.
\newblock {\em Ann. Inst. H. Poincar\'e Probab. Statist.}, 20(3):201--223.

\bibitem[Birg{\'e}, 1986]{MR816706}
Birg{\'e}, L. (1986).
\newblock On estimating a density using {H}ellinger distance and some other
  strange facts.
\newblock {\em Probab. Theory Relat. Fields}, 71(2):271--291.

\bibitem[Birg{\'e}, 2006a]{MR2219712}
Birg{\'e}, L. (2006a).
\newblock Model selection via testing: an alternative to (penalized) maximum
  likelihood estimators.
\newblock {\em Ann. Inst. H. Poincar\'e Probab. Statist.}, 42(3):273--325.

\bibitem[Birg{\'e}, 2006b]{MR2320111}
Birg{\'e}, L. (2006b).
\newblock Statistical estimation with model selection.
\newblock {\em Indag. Math. (N.S.)}, 17(4):497--537.

\bibitem[Bissiri et~al., 2016]{MR3557191}
Bissiri, P.~G., Holmes, C.~C., and Walker, S.~G. (2016).
\newblock A general framework for updating belief distributions.
\newblock {\em J. R. Stat. Soc. Ser. B. Stat. Methodol.}, 78(5):1103--1130.

\bibitem[Catoni, 2007]{MR2483528}
Catoni, O. (2007).
\newblock {\em Pac-{B}ayesian supervised classification: the thermodynamics of
  statistical learning}, volume~56 of {\em Institute of Mathematical Statistics
  Lecture Notes---Monograph Series}.
\newblock Institute of Mathematical Statistics, Beachwood, OH.

\bibitem[Chernozhukov and Hong, 2003]{MR1984779}
Chernozhukov, V. and Hong, H. (2003).
\newblock An {MCMC} approach to classical estimation.
\newblock {\em J. Econometrics}, 115(2):293--346.

\bibitem[DeVore and Lorentz, 1993]{DeVore}
DeVore, R. and Lorentz, G. (1993).
\newblock {\em Constructive approximation}.
\newblock Springer-Verlag.

\bibitem[Ghosal et~al., 2000]{MR1790007}
Ghosal, S., Ghosh, J.~K., and van~der Vaart, A.~W. (2000).
\newblock Convergence rates of posterior distributions.
\newblock {\em Ann. Statist.}, 28(2):500--531.

\bibitem[Ibragimov and Has{'}minski{\u\i}, 1981]{MR620321}
Ibragimov, I.~A. and Has{'}minski{\u\i}, R.~Z. (1981).
\newblock {\em Statistical Estimation. Asymptotic Theory}, volume~16.
\newblock Springer-Verlag, New York.

\bibitem[Jiang and Tanner, 2008]{MR2458185}
Jiang, W. and Tanner, M.~A. (2008).
\newblock Gibbs posterior for variable selection in high-dimensional
  classification and data mining.
\newblock {\em Ann. Statist.}, 36(5):2207--2231.

\bibitem[Kleijn and van~der Vaart, 2006]{MR2283395}
Kleijn, B. J.~K. and van~der Vaart, A.~W. (2006).
\newblock Misspecification in infinite-dimensional {B}ayesian statistics.
\newblock {\em Ann. Statist.}, 34(2):837--877.

\bibitem[Kleijn and van~der Vaart, 2012]{MR2988412}
Kleijn, B. J.~K. and van~der Vaart, A.~W. (2012).
\newblock The {B}ernstein-{V}on-{M}ises theorem under misspecification.
\newblock {\em Electron. J. Stat.}, 6:354--381.

\bibitem[Le~Cam, 1973]{MR0334381}
Le~Cam, L. (1973).
\newblock Convergence of estimates under dimensionality restrictions.
\newblock {\em Ann. Statist.}, 1:38--53.

\bibitem[Le~Cam, 1975]{MR0395005}
Le~Cam, L. (1975).
\newblock On local and global properties in the theory of asymptotic normality
  of experiments.
\newblock In {\em Stochastic processes and related topics ({P}roc. {S}ummer
  {R}es. {I}nst. {S}tatist. {I}nference for {S}tochastic {P}rocesses, {I}ndiana
  {U}niv., {B}loomington, {I}nd., 1974, {V}ol. 1; dedicated to {J}erzy
  {N}eyman)}, pages 13--54. Academic Press, New York.

\bibitem[Le~Cam, 1986]{MR856411}
Le~Cam, L. (1986).
\newblock {\em Asymptotic Methods in Statistical Decision Theory}.
\newblock Springer Series in Statistics. Springer-Verlag, New York.

\bibitem[Massart, 2007]{MR2319879}
Massart, P. (2007).
\newblock {\em Concentration Inequalities and Model Selection}, volume 1896 of
  {\em Lecture Notes in Mathematics}.
\newblock Springer, Berlin.
\newblock Lectures from the 33rd Summer School on Probability Theory held in
  Saint-Flour, July 6--23, 2003.

\bibitem[Panov and Spokoiny, 2015]{MR3420819}
Panov, M. and Spokoiny, V. (2015).
\newblock Finite sample {B}ernstein--von {M}ises theorem for semiparametric
  problems.
\newblock {\em Bayesian Anal.}, 10(3):665--710.

\bibitem[van~de Geer, 2000]{MR1739079}
van~de Geer, S.~A. (2000).
\newblock {\em Applications of empirical process theory}, volume~6 of {\em
  Cambridge Series in Statistical and Probabilistic Mathematics}.
\newblock Cambridge University Press, Cambridge.

\bibitem[van~der Vaart and Wellner, 1996]{MR1385671}
van~der Vaart, A.~W. and Wellner, J.~A. (1996).
\newblock {\em Weak Convergence and Empirical Processes. With Applications to
  Statistics}.
\newblock Springer Series in Statistics. Springer-Verlag, New York.

\end{thebibliography}

\begin{thebibliography}{}

\bibitem[Baraud, 2016]{Bar2016}
Baraud, Y. (2016).
\newblock Bounding the expectation of the supremum of an empirical process over
  a (weak) vc-major class.
\newblock {\em Electron. J. Statist.}, 10(2):1709--1728.

\bibitem[Birg\'{e}, 2015]{Lucien-Bayes}
Birg\'{e}, L. (2015).
\newblock About the non-asymptotic behaviour of bayes estimators.
\newblock {\em Journal of Statistical Planning and Inference}, 166(67-77).

\bibitem[van~de Geer, 2000]{MR1739079}
van~de Geer, S.~A. (2000).
\newblock {\em Applications of empirical process theory}, volume~6 of {\em
  Cambridge Series in Statistical and Probabilistic Mathematics}.
\newblock Cambridge University Press, Cambridge.

\end{thebibliography}
\bibliographystyle{apalike}

\end{document}